\newtheorem{thm}{Theorem}[section]
\newtheorem{lma}{Lemma}[section]
\newtheorem{pro}{Proposition}[section]
\theoremstyle{definition}
\theoremstyle{remark}
\newtheorem{Remark}{Remark}[section]
\numberwithin{equation}{section}
\def\di{\displaystyle}
\def\f{\frac}
\def\hf1{^\f{1}{1-\xi^2}}
\def\be{\begin{equation}}
\def\en{\end{equation}}
\def\bs{\begin{split}}
\def\es{\end{split}}
\def\ba{\begin{align}}
\def\ea{\end{align}}
\author[Xushan Huang ]
{Xushan Huang}
\address{\newline Institute of Applied Mathematics, Academy of Mathematics and Systems Science, Chinese Academy of Sciences, Beijing 100190, P. R. China
\newline and School of Mathematical Sciences, University of Chinese Academy of Sciences, Beijing 100049, P. R. China}
\email{huangxushan@amss.ac.cn}
\author[Yi Wang]
{Yi Wang}
\address{\newline Institute of Applied Mathematics, Academy of Mathematics and Systems Science, Chinese Academy of Sciences, Beijing 100190, P. R. China
\newline and School of Mathematical Sciences, University of Chinese Academy of Sciences, Beijing 100049, P. R. China}
\email{wangyi@amss.ac.cn}
\renewcommand{\fancyhead}{}
\title{Global existence and optimal time decay rate to one-dimensional two-phase flow model}
\keywords{Global existence, spectral analysis, weighted energy estimate, optimal time decay rate}
\date{\today}
\thanks{\textbf{Acknowledgment.} The work of Yi Wang was partially supported by the NSFC (Grant No. 12171459, 12288201, 12090014) and CAS Project for Young Scientists in Basic Research, Grant No. YSBR-031.}
\begin{document}
	
\begin{abstract}
We investigate the global existence and optimal time decay rate of solution to the one-dimensional (1D) two-phase flow described by compressible Euler equations coupled with compressible Navier-Stokes equations through the relaxation drag force on the momentum equations (Euler-Navier-Stokes system). First, we prove the global existence of a strong solution and the stability of the constant equilibrium state to 1D Cauchy problem of compressible Euler-Navier-Stokes system by using the standard continuity argument for small $H^{1}$ data while its second order derivative can be large. Then we derive the optimal time decay rate to the constant equilibrium state. Compared with the multi-dimensional case, it is much harder to get the optimal time decay rate by the direct spectrum method due to a slower convergence rate of the fundamental solution in the 1D case. To overcome this main difficulty, we need to first carry out time-weighted energy estimates (not optimal) for higher order derivatives, and based on these time-weighted estimates, we can close a priori assumptions and get the optimal time decay rate by spectral analysis method. Moreover, due to the non-conserved form and insufficient decay rate of the coupled drag force terms between the two-phase flows, we essentially need to use momentum variables $(m= \rho u, M=n\omega)$, rather than velocity variables $(u, \omega)$ in the spectrum analysis, to fully cancel out those non-conserved and insufficient time decay drag force terms.

\end{abstract}

\maketitle
\tableofcontents
\section{Introduction and main results}\label{section 1}
\subsection{Introduction}
Two-phase flow models appear in a large number of important applications in nature and engineering \cite{be2003}. In this paper, we are concerned with the two-phase flow model 
described by the following compressible Euler equations coupled with compressible Navier-Stokes equations with density-dependent viscosities
\begin{equation}\label{tf}
\left\{
\begin{array}{l}
\di \rho_{t}+ \rm div_x (\rho u)=0,\\[2mm]
\di (\rho u)_t+ \rm div_x  (\rho u\otimes u) +\nabla_x p(\rho)=
\rho n(\omega-u),\\[2mm]
\di n_{t}+ \rm div_x (n \omega)=0,\\[2mm]
\di (n \omega)_t+ \rm div_x  (n \omega \otimes \omega) +\nabla_x n=\rm div_x(n\,\mathbb{D}(\omega))-\rho n(\omega-u),
\end{array}
\right.  
\end{equation} 
through the relaxation drag force on the momentum equations, where the spatial variable $x\in \mathbb{R}^3$ and the time variable $t>0$ and  $\big(\rho(t,x) $, $u(t,x)\big)$ and $\big(n(t,x)$, $\omega(t,x)\big)$ are respectively the density and velocity of the two fluids, the pressure $p$ of one fluid is given by the standard $\gamma$-law: 
\begin{align}\label{eq:p}
	p(\rho)= a\rho^{\gamma},
\end{align}
 with $a>0$ being the fluid constant and $\gamma> 1$  being the adiabatic exponent. Note that one of the two fluids is isothermal  in \eqref{tf} and the pressure is linear to the density $n$ and its coefficient is normalized to be 1 without loss of generality. Moreover, $\mathbb{D}(\omega):=\frac{\nabla \omega+(\nabla \omega)^t}{2}$ is the deformation tensor with $(\nabla \omega)^t$ being the transpose of the matrix $\nabla \omega$, and its density-dependent viscosity coefficient is $n$.

The two-fluid system \eqref{tf} can be formally derived from Chapman-Enskog expansion of the fluid-particle model consisting of the compressible Euler equations for fluids coupled with the Vlasov-Fokker-Planck equation for particles, through the relaxation drag force on the momentum equation and the Vlasov force on the Fokker-Planck equation (cf. \cite{W2022}):
\begin{equation}\label{1.02}
\left\{\begin{array}{ll}
\rho_t + \rm div_x(\rho u) = 0,&\\
(\rho u)_t + \rm div_x(\rho u\otimes u) + \nabla_xp(\rho) = \di\int_{\mathbb{R}^3}\rho(v - u)f dv,&\\[3mm]
f_t + v\cdot \nabla_xf =\rm div_v\big(\rho(v-u)f + \nabla_vf\big).
\end{array} \right.
\end{equation}

Another common-used compressible two-phase flow model, coupled by isothermal Euler equations and classical isentropic Navier-Stokes equations (cf. \cite{choi2016}), can be written as

\begin{equation}\label{tf-1}
\left\{
\begin{array}{l}
\di \rho_{t}+ \rm div_x (\rho u)=0,\\[2mm]
\di (\rho u)_t+ \rm div_x  (\rho u\otimes u) +\nabla_x p(\rho)=\mu\Delta u+(\mu+\lambda)\nabla \rm div_x u+
\kappa n(\omega-u),\\[2mm]
\di n_{t}+ \rm div_x (n \omega)=0,\\[2mm]
\di (n \omega)_t+ \rm div_x  (n \omega \otimes \omega) +\nabla_x n=-\kappa n(\omega-u).
\end{array}
\right.  
\end{equation} 

Note that both two-phase flow models \eqref{tf} and \eqref{tf-1} can be derived from the kinetic fluid-particle system and have similar couplings. However, the dissipation structures of two systems \eqref{tf} and \eqref{tf-1} are quite different, in particular, the viscosity of the momentum equation $\eqref{tf}_4$ depends on the density function linearly while the corresponding one of $ \eqref{tf-1}_4$ is ideal without viscosity.

For one fluid system described by compressible Navier-Stokes equations or Euler equations (possibly with the damping effect), there is a large amount of literature investigating the local and global well-posedness and large-time asymptotic behaviors of the solution, for instance, \cite{na1962, ser1959,  ka1977, cho2006, ma1980, hs1992, li1998, fe2001, jz-2001, hu2005, hlx-2012, yang2001} and the references therein.

For the compressible two-phase flow models \eqref{tf} or \eqref{tf-1}, there are relatively few results on the global existence and large-time behavior of the solution. In 2016, Choi \cite{choi2016} first established the global existence of a unique strong solution \eqref{tf-1} in three-dimensional domain $\mathbb{T}^{3}$ or $\mathbb{R}^{3}$, and further proved the exponential time-decay rate of the solution towards a constant equilibrium state in $\mathbb{T}^{3}$. Then Wu-Zhang-Zou \cite{wu2020} proved the optimal time decay rate of the solution to \eqref{tf-1} in $\mathbb{R}^3$ by direct spectral analysis method. Very recently, Li-Shou \cite{li2023-sima} proved the global existence of the solution to \eqref{tf-1} near the equilibrium state in a critical homogeneous Besov space, and obtained the optimal time-decay rates of the global solution toward the equilibrium state in $\mathbb{R}^d$ with $d\geq 2$. For the two-phase flow model \eqref{tf}, Wu-Zhang-Tang \cite{wu-zhang2023} proved the global existence of strong solution around the equilibrium state in $\mathbb{R}^{3}$ and derived the optimal algebraic time-decay rate in $L^2$-norm provided the classical viscosity is added to the compressible Euler part (i.e. classical compressible Navier-Stokes equations coupled with compressible Navier-Stokes equations with density-dependent viscosities).

However, there are no results on the optimal time decay rate of the solution to two-phase flow model \eqref{tf} or \eqref{tf-1} in one-dimensional case as far as we know. Compared with the multi-dimensional case, 1D fundamental solution has a slower convergence rate, which makes it hard to get 1D optimal time decay rate by direct spectrum method. 
In 1D setting, the compressible Euler-Navier-Stokes system \eqref{tf} takes the form
\begin{equation}\label{1.1}
\left\{\begin{array}{ll}
\rho_t + (\rho u)_x = 0,&\\[2mm]
(\rho u)_t + \left(\rho u^{2}+p(\rho)\right)_x = \rho n(\omega-u),&\\[2mm]
n_t + (n\omega)_x=0,&\\[2mm]
(n\omega)_t + (n\omega^{2}+n)_x =(n\omega_x)_x + \rho n(u-\omega),
\end{array} \right.
\end{equation}
where the pressure $p(\rho)$ is given by \eqref{eq:p}. 
Without loss of generality, we can normalize the fluid constant $a=1$ in the sequel.
The following initial data is imposed to the 1D system (\ref{1.1}):
\begin{equation}\label{1.2}
(\rho,u,n,\omega)(0,x)=(\rho_0,u_0,n_0,\omega_0)(x) \longrightarrow (\rho_{*}, 0, n_{*}, 0), ~x\rightarrow \pm\infty,
\end{equation}
for the constant equilibrium state $ (\rho_{*}, 0, n_{*}, 0)$ satisfying $\rho_{*}>0, n_{*}>0 .$
In the present paper, we are concerned with the global existence and optimal time decay rate to 1D two-phase flow model \eqref{1.1}-\eqref{1.2}.

First, we prove the global existence and the time-asymptotic stability of the constant equilibrium state $ (\rho_{*}, 0, n_{*}, 0)$ to 1D two-phase flow model \eqref{1.1}-\eqref{1.2} by the elementary entropy and energy methods under the smallness condition on the $H^{1}$ norm of the perturbation around a constant equilibrium state, while their second order derivatives can be large. Then we obtain the optimal time decay rate towards the constant equilibrium state $ (\rho_{*}, 0, n_{*}, 0)$ for the global solution to the 1D two-phase flow model \eqref{1.1}-\eqref{1.2}. Compared with the multi-dimensional case, the main difficulty for the optimal time decay rate in 1D case is due to the slower time decay rate of 1D fundamental solution, which makes it hard to get the time-decay rate by direct spectrum method. Motivated by the techniques in Zeng \cite{Z2019}, we first carry out the time-weighted energy estimates for higher-order derivatives of the solution, then based on these time decay estimates (not optimal) on the higher-order derivatives, we are able to deduce the optimal time decay rate through a spectral analysis approach. However, we need to emphasize that the two-phase flow model \eqref{1.1} does not satisfy the structure condition proposed in \cite{Z2019}. Moreover, due to the non-conserved form and insufficient decay rate of the coupled drag force terms between the two-phase flows, we essentially need to use momentum variables $(m= \rho u, M=n\omega)$, rather than velocity variables $(u, \omega)$ to the system \eqref{1.1} in the spectral analysis, to fully cancel out those non-conserved and insufficiently time-decay drag force terms.


The rest of the paper is organized as follows. We first list some notations and an auxiliary lemma, then reformulate the problem and state our main results in Section \ref{section 1}. In Section \ref{section 2}, global existence and time-asymptotic stability of the equilibrium state $ (\rho_{*}, 0, n_{*}, 0)$ to \eqref{1.1}-\eqref{1.2} in Theorem \ref{thm 1.2} are proved by the continuity arguments, based on the local existence of the solution and the uniform-in-time a priori estimates. In Section \ref{section 3}, we first carry out the detailed spectral analysis of the linearized system for the variables $(\rho, m= \rho u, n, M=n\omega)$ around the equilibrium state $ (\rho_{*}, 0, n_{*}, 0)$ and then establish the time-weighted energy estimates (not optimal) to the derivatives of the perturbations for the nonlinear system. Finally, in Section \ref{section 4} we prove Theorem \ref{thm 1.3} for the optimal time-decay rate by combining the spectrum analysis and the time-weighted energy estimates obtained in Section \ref{section 3}.

Now we list some notations and an auxiliary lemma.
If $f \in L^{p}(\mathbb{R})$, then the $L^{p}$ norm of $f$ is defined by 
$$ \|f\|_{L^{p}(\mathbb{R})} := \left(\int_\mathbb{R} |f(x)|^{p} dx\right)^{\frac{1}{p}}.$$
For $p=2$, we simply write $\|\cdot\|:=\|\cdot\|_{L^2(\mathbb{R})}$ and for $p=\infty$, $\|\cdot\|_{\infty}:=\|\cdot\|_{L^{\infty}(\mathbb{R})}$.

\begin{lma}{\rm{(}}1D Gagliardo-Nirenberg inequality{\rm{)}}\label{lma 1.2}
Let $j$ and $m$ be non-negative integers such that $j<m$ and $1\leq p, q, r \leq +\infty$ be positive extended real numbers and $\theta \in [0,1]$ such that the relations 
$$\frac{1}{p} = \frac{j}{n} + \theta(\frac{1}{r}- m)+ \frac{1-\theta}{q},  \quad  \frac{j}{m} \leq \theta \leq 1 $$
hold. Then, 
\begin{equation}\label{GN}
\|\partial_x^{j}u\|_{L^{p}(\mathbb{R})}\leq C\|\partial_x^{m}u\|^{\theta}_{L^{r}(\mathbb{R})}\|u\|^{1-\theta}_{L^{q}(\mathbb{R})}
\end{equation}
for any $u\in L^{q}(\mathbb{R})$ such that $\partial_x^{m}u \in L^{r}(\mathbb{R})$, where the positive constant $C$ only depends on the parameters $j, m, q, r, \theta$. In particular, for $f\in H^{1}(\mathbb{R})$, 
\begin{equation}\label{Sob}
\|f\|_{\infty} \leq C\|f_x\|^{\frac{1}{2}}\|f\|^{\frac{1}{2}}.
\end{equation}
\end{lma}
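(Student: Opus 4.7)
The plan is to establish Lemma \ref{lma 1.2} in two stages, focusing on the concrete Sobolev-type estimate \eqref{Sob} (which is the form actually used in the subsequent sections) and sketching the general interpolation inequality \eqref{GN} more briefly.

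First I would prove \eqref{Sob} by the standard one-line argument: for $f \in C_c^\infty(\mathbb{R})$ and any $x \in \mathbb{R}$, the fundamental theorem of calculus gives
$$f(x)^{2} = 2\int_{-\infty}^{x} f(y)\, f_x(y)\, dy,$$
so Cauchy--Schwarz immediately yields $f(x)^{2} \leq 2\|f\|\,\|f_x\|$. Taking the supremum in $x$ produces $\|f\|_\infty^{2} \leq 2\|f\|\,\|f_x\|$, and a routine density argument extends the estimate from $C_c^\infty(\mathbb{R})$ to all of $H^{1}(\mathbb{R})$.

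For \eqref{GN} I would follow Nirenberg's classical scheme. Applying the desired inequality to the rescaled function $u_\lambda(x) := u(\lambda x)$ and matching powers of $\lambda$ on both sides forces the stated relation among $j, m, p, q, r, \theta$; this is purely algebraic and serves as a dimensional sanity check. Next I would prove the endpoint cases: the simplest one, $\|u\|_\infty \leq C \|u_x\|_{L^{1}(\mathbb{R})}$, follows immediately from $u(x) = \int_{-\infty}^{x} u_x(y)\, dy$, while the endpoint $\theta = j/m$, namely $\|\partial_x^{j} u\|_{L^p} \leq C \|\partial_x^{m} u\|_{L^r}^{j/m}\|u\|_{L^q}^{1-j/m}$, is obtained via iterated integration by parts combined with H\"older's inequality. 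Intermediate values of $\theta$ are then recovered by an interpolation step: one writes the integrand defining $\|\partial_x^{j} u\|_{L^p}^{p}$ as a product of factors and applies H\"older with carefully chosen weights.

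The genuine obstacle is handling the full range $1 \leq p, q, r \leq \infty$ in \eqref{GN}. The extreme values $p = 1$ and $p = \infty$ lie outside the scale of reflexive $L^p$-spaces and so cannot all be treated uniformly by Riesz--Thorin interpolation; each endpoint configuration must be verified with its own argument, and the constants must be tracked so that they combine into a single $C = C(j, m, q, r, \theta)$. Since only \eqref{Sob} and its immediate $L^{4}$-type corollaries are ever invoked in Sections \ref{section 2}--\ref{section 4}, I would write out the proof of \eqref{Sob} in detail and merely refer the reader to a standard reference for the general statement \eqref{GN}.
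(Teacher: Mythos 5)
The paper does not actually prove Lemma \ref{lma 1.2}; it is stated as a classical reference result (the 1D Gagliardo--Nirenberg interpolation inequality), so there is no in-paper argument to compare against. Your proof of \eqref{Sob} via the fundamental theorem of calculus and Cauchy--Schwarz is the standard correct argument and yields the explicit constant $\sqrt{2}$, and your outline for the general inequality \eqref{GN} (scaling to determine the exponent relation, proving the endpoint cases $\theta = j/m$ and the $L^\infty$--$L^1$ Sobolev bound, then interpolating) is the canonical Nirenberg scheme; deferring the bookkeeping at the extreme values of $p,q,r$ to a standard reference is consistent with how the paper itself treats the lemma. One small remark: the exponent relation as printed in the paper contains a typo --- the ``$n$'' in $\frac{j}{n}$ should be the spatial dimension, here $1$, so the relation should read $\frac{1}{p} = j + \theta\left(\frac{1}{r} - m\right) + \frac{1-\theta}{q}$; your scaling check would have detected this.
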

\subsection{Reformulation of the problem and main results}
Denote the sound speed 
$ \sigma (\rho) := \sqrt{p'(\rho)} $  and set $ \sigma_{*} := \sigma (\rho
_{*}) $. Define (cf. \cite{W2003})
\begin{equation}
v := \frac{2}{\gamma -1}  (\sigma (\rho) - \sigma_{*}), 
\end{equation}
then the system \eqref{1.1} can be transformed into the following symmetric system:
\begin{equation}\label{2.2}
\left\{\begin{array}{l}
\di v_t + \sigma_{*}u_x = -uv_x - \frac{\gamma -1}{2}vu_x,\\[3mm]
\di u_t + \sigma_{*}v_x = -uu_x - \frac{\gamma -1}{2}vv_x + n(\omega-u),\\[3mm]
\di n_t+(n\omega)_x=0,\\[2mm]
\di (n\omega)_t+(n\omega^{2}+n)_x=(n\omega_x)_x+\rho n(u-\omega),
\end{array} \right.
\end{equation}
with the initial condition
\begin{equation}\label{2.3}
(v, u, n, \omega)|_{t=0} = (v_0(x), u_0(x), n_0(x), \omega_0(x)).
\end{equation}

Note that for the classical solution, the systems \eqref{1.1} and \eqref{2.2} are equivalent. To state our main results, we first define the solution space to the system \eqref{1.1} or \eqref{2.2} on the time interval $[0,T]$ by 
\begin{equation*}
\begin{aligned}
X(0,T) = & \bigg\{(\rho, u, n, \omega) \bigg|(\rho -\rho_*, u, n-n_*) \in \Big(C\big(0,T;H^{2}(\mathbb{R})\big)\cap L^{2}\big(0,T;H^{2}(\mathbb{R})\big)\Big)^{3}, \\
& \qquad\qquad\qquad\omega \in 
C\big(0,T;H^{2}(\mathbb{R})\big)\cap L^{2}\big(0,T;H^{3}(\mathbb{R})\big)\bigg\} .
\end{aligned}
\end{equation*}
Our first main result concern the global existence and time-asymptotic stability of the equilibrium state $ (\rho_{*}, 0, n_{*}, 0)$ to 1D compressible Euler-Navier-Stokes equations \eqref{1.1}-\eqref{1.2}. 

\begin{thm}{\rm{(}}Global existence{\rm{)}}\label{thm 1.2}
Assume $(\rho_0-\rho_*, u_0, n_0 - n_{*}, \omega_0)\in H^{2}(\mathbb{R})$ and satisfies that $ \mathcal{E}_0:=\|(\rho_{0xx}, u_{0xx}, n_{0xx}, \omega_{0xx})\|   < \min\{\frac{1}{2} \mathcal{E}, \frac{1}{4\sqrt{C_0}} \mathcal{E}\}$ 
and $\epsilon_0:= \|(\rho_0-\rho_*, u_0, n_0 - n_{*}, \omega_0)\|_{H^{1}}  <\min\{\frac{1}{2}\epsilon, \frac{1}{2\sqrt{C_0}}\epsilon, \mathcal{E}_0\}$ 
with  $\mathcal{E}, \epsilon$ and $C_0$ being defined in Proposition \ref{a-priori}.
Then the two-phase flow system (or compressible Euler-Navier-Stokes equations) \eqref{1.1}-\eqref{1.2} has a unique global-in-time solution $(\rho, u, n, \omega)\in X(0,\infty)$ satisfying
\begin{equation}\label{11111}
\begin{aligned}
& \sup\limits_{t\in [0,\infty)}\big[\|(\rho -\rho_*, u, n-n_*, \omega)(t,\cdot)\|_{H^{1}}^{2} +\|(\rho_t, u_t)(t,\cdot)\|^{2}\big]  \\
& + \int_{0}^{\infty}\big[\|(\rho_x, u_x, n_x, u_t)(t,\cdot)\|^{2} +\|\omega_{xx}(t,\cdot)\|^{2} + \|(u-\omega)(t,\cdot)\|^{2}\big] dt  \leq C_0\epsilon_0^{2},  \\
& \sup\limits_{t\in [0,\infty]}\|(\rho_{xx}, u_{xx},n_{xx}, \omega_{xx}, \rho_{xt}, u_{xt})(t,\cdot)\|^{2} \\
& + \int_{0}^{\infty} (\|(\rho_{xx}, u_{xx}, n_{xx}, u_{xt})(t,\cdot)\|^{2} + \|\omega_{xxx}(t,\cdot)\|^{2} )dt \leq C_0(\epsilon_{0}^{2}+{\mathcal{E}^2_0}).
\end{aligned}
\end{equation}
Consequently, the following time-asymptotic behavior of the solution holds true,
\begin{equation}
\|(\rho-\rho_*, u, n-n_*, \omega)(t,\cdot)\|_{\infty} + \|(\rho_x, u_x, n_x, \omega_x)(t,\cdot)\|_{H^{1}} \longrightarrow 0 \quad as  \quad t\rightarrow \infty.
\end{equation}
\end{thm}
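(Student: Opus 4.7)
The overall plan is the standard continuity/bootstrap scheme: combine a local existence result for the symmetrized system \eqref{2.2} (which is a quasilinear hyperbolic-parabolic system of composite type and admits local strong solutions in $H^2$ by a contraction mapping on a linearized problem) with \emph{uniform-in-time} a priori estimates, then piece the local solutions together. Under the a priori ansatz that the $H^1$ perturbation $N_1(T):=\sup_{[0,T]}\|(\rho-\rho_*,u,n-n_*,\omega)\|_{H^1}$ stays below some small $\epsilon$ and the $H^2$ quantity $N_2(T)$ stays bounded, I would run two separate energy inequalities: a \emph{small} low-order one closing on $\epsilon_0$, and a \emph{possibly large} high-order one closing on $\epsilon_0^2+\mathcal E_0^2$. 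The 1D Sobolev embedding $\|f\|_\infty\le C\|f\|^{1/2}\|f_x\|^{1/2}$ from Lemma \ref{lma 1.2} is the device that lets $L^\infty$ bounds on $(\rho-\rho_*,u,n-n_*,\omega)$ and their first derivatives be controlled \emph{solely} by $N_1$, so that cubic error terms at every level of differentiation can be absorbed by the smallness of $\epsilon_0$ rather than of $\mathcal E_0$.

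For the low-order estimate I would work on the symmetric form \eqref{2.2} and derive a basic energy identity by testing $\eqref{2.2}_1$ with $v$, $\eqref{2.2}_2$ with $u$, and using the classical relative-entropy functional $n-n_*-n_*\ln(n/n_*)$ together with $\tfrac12 n\omega^2$ for the Navier-Stokes pair. This produces dissipation $\int n\omega_x^2$ from the density-dependent viscosity and $\int \rho n(u-\omega)^2$ from the drag. To recover dissipation for $v_x$, $u_x$ and $n_x$, which are absent in the basic identity, I would use the standard hyperbolic trick of testing the momentum equations against spatial derivatives of the densities (e.g.\ $\eqref{2.2}_2\cdot v_x$ and $\eqref{1.1}_4/n\cdot n_x$), producing good terms $\sigma_*\|v_x\|^2$, $\|n_x\|^2$ modulo errors that are quadratic in first derivatives times $L^\infty$ smallness, plus boundary-in-time terms controlled by $\int u_t v_x$ etc. Summing with small weights yields
\begin{equation*}
\tfrac{d}{dt}E_1 + D_1 \;\le\; C\,N_1(T)\bigl(\|(v_x,u_x,n_x,\omega_x)\|^2+\|\omega_{xx}\|^2+\|u-\omega\|^2\bigr),
\end{equation*}
where $E_1\sim\|(v,u,n-n_*,\omega)\|_{H^1}^2$ and $D_1$ contains all the first-order dissipations listed in \eqref{11111}; smallness of $N_1$ absorbs the right-hand side, giving the first line of \eqref{11111}. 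Estimates for $\rho_t,u_t$ in $L^2$ follow directly from the equations once $\|\rho_x\|,\|u_x\|,\|u-\omega\|$ are under control.

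For the high-order estimate I would differentiate the system twice and perform energy estimates on $\partial_x^2$ of the variables, adding separate estimates on $(\rho_{xt},u_{xt})$ obtained by differentiating the first two equations of \eqref{1.1} once in $x$ and once in $t$. The dissipation from $\eqref{1.1}_4$ now yields $\|\omega_{xxx}\|^2$; the hyperbolic trick applied at one higher derivative level recovers $\|(\rho_{xx},u_{xx},n_{xx})\|^2$. The crucial observation is that every nonlinear commutator produced at this order is a product of two factors, one of which is either $\|(\rho-\rho_*,u,n-n_*,\omega)\|_\infty$ or $\|(\rho_x,u_x,n_x,\omega_x)\|_\infty$, i.e.\ controlled via the 1D Sobolev inequality by $N_1^{1/2}N_2^{1/2}\le \epsilon^{1/2}N_2^{1/2}$ (with one exceptional cubic term that ends up quartic after Gronwall). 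Hence the resulting Gronwall-type inequality has the schematic form $\tfrac{d}{dt}E_2+D_2\le C\epsilon^{1/2}D_2+C\epsilon^{1/2}D_1$, which closes to give the second line of \eqref{11111} with constant $C_0(\epsilon_0^2+\mathcal E_0^2)$.

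The main obstacle, and where I expect to spend the most care, is precisely the asymmetric smallness: I must verify, term by term in the $H^2$ estimates, that no residual quadratic expression in the large quantity $\|D^2(\rho,u,n,\omega)\|$ ever appears undamped, because otherwise one would need $\mathcal E_0$ (not just $\epsilon_0$) to be small. This forces a careful accounting of where each derivative falls in products like $u\partial_x^2\rho_x$, $\rho n(u-\omega)$ differentiated twice, and the density-dependent viscosity term $(n\omega_x)_x$, which when differentiated twice yields $n\omega_{xxxx}+\cdots$ that must be dualized against $\omega_{xx}$ via integration by parts to trade two derivatives for one on each factor. With the bootstrap closed, the continuity argument extends $T$ to $\infty$; the asymptotic decay to zero follows from $\|(\rho-\rho_*,u,n-n_*,\omega)\|_\infty\le C\|\cdot\|^{1/2}\|\partial_x\cdot\|^{1/2}$ together with the standard argument that an $L^2$-in-time, uniformly $H^1$-in-time function whose time derivative is bounded in $L^2$ must tend to zero in $L^\infty$ as $t\to\infty$.
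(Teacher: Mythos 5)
Your proposal is correct in outline and would very likely close, but it recovers the missing first-order dissipation by a genuinely different mechanism than the paper. Where you invoke a Kawashima-type cross estimate --- testing $\eqref{2.2}_2$ against $v_x$ and $\eqref{1.1}_4/n$ against $n_x$ to manufacture $\sigma_*\|v_x\|^2$ and $\|n_x\|^2$, at the price of time-boundary terms $\tfrac{d}{dt}\int u v_x$, $\tfrac{d}{dt}\int \omega n_x$ that must be traded against $\|u_x\|^2$, $\|\omega_x\|^2$ --- the paper instead exploits two structural features of \eqref{1.1}. First, for the Navier--Stokes pair it uses the Bresch--Desjardins effective velocity $\omega+(\ln n)_x$ (equation \eqref{2.23}), which for the viscosity coefficient $\mu(n)=n$ collapses $\eqref{1.1}_3$--$\eqref{1.1}_4$ into a damped transport equation whose energy identity \eqref{2.28} produces $\int n_x^2/n$ directly, with no commutator involving $\omega_{xx}$ or $n_{xx}$. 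Second, for the Euler pair it differentiates $\eqref{2.2}_{1,2}$ in $x$ and in $t$ and notices that the relaxation term $n(\omega-u)$, once differentiated and tested against $u_x$ or $u_t$, yields $\int n u_x^2$ and $\int n u_t^2$ as genuine damping (see \eqref{2.62}, \eqref{(2.69)}); the quantity $\|v_x\|$ is then never estimated by an energy identity at all, but read off algebraically from the equation via \eqref{2.64}, $|v_x|\lesssim |u_t|+|u_x|+|\omega-u|$, and likewise for $v_t$. Both strategies are viable here because $n\approx n_*>0$, but the paper's is shorter and tailored: the BD entropy avoids the awkward $\int\omega_{xx}n_x$ and $\int n_x^2\omega_x/n$ commutators that your cross estimate would produce from the density-dependent viscosity, and the drag-damping route avoids the Kawashima bookkeeping entirely on the Euler side, delivering in the bargain the $\int\|u_t\|^2$ and $\|(\rho_t,u_t)\|^2$ bounds that appear explicitly in \eqref{11111}. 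Your treatment of the asymmetric smallness ($N_1$ small, $N_2$ merely bounded) matches the paper's in spirit --- the paper encodes it as $\epsilon\,\mathcal E\le\tfrac1{16}$ so that every $\epsilon^{1/2}\mathcal E^{1/2}$ prefactor is absorbable --- so nothing is missing there, though you should be explicit that the absorption constant must beat the lower bound of $n$ in $\int n u_x^2$ rather than merely be $<1$.
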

 Furthermore, we have the following theorem for the optimal time decay.

\begin{thm}{\rm{(}}Optimal time decay rates{\rm{)}}\label{thm 1.3}
Assume $(\rho_0-\rho_*, u_0, n_0 - n_{*}, \omega_0)\in H^{4}(\mathbb{R})$ satisfying $ F_0:= \|(\rho_0-\rho_*, u_0, n_0 - n_{*}, \omega_0)\|_{H^{1}} +\|(\rho_0-\rho_*, u_0, n_0-n_*, \omega_0)\|_{L^{1}} = \epsilon_0 + \|(\rho_0-\rho_*, u_0, n_0-n_*, \omega_0)\|_{L^{1}} $ suitably small and $\mathcal{E}_0:=\|(\rho_0, u_0, n_0, \omega_0)_{xx}\|_{H^{2}} <\infty$, then the global solution obtained in Theorem \ref{thm 1.2} has the following optimal time decay rate:
\begin{equation}\label{1.6}
\left\{\begin{array}{ll}
\|\partial_x^{k}(\rho - \rho_*, u, n-n_*, \omega)(t,\cdot)\| \leq CF_0(1+t)^{-\frac{1}{4} - \frac{k}{2}}, \quad k=0, 1, &\\
\|\partial_x^{2}(\rho - \rho_*, u, n-n_*, \omega)(t,\cdot)\| \leq C{\mathcal{E}_0}(1+t)^{-\frac{5}{4}},
\end{array} \right. 
\end{equation}
and 
\begin{equation}\label{1.8}
\|\partial_x^{k}(u- \omega)(t,\cdot)\| \leq C(F_0 +{\mathcal{E}_0}) (1+t)^{-\frac{3}{4}- \frac{k}{2}}, \quad  k=0, 1.
\end{equation}
\end{thm}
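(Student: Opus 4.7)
The plan is to combine the spectral analysis of the linearized problem in the momentum variables $(m,M):=(\rho u, n\omega)$ with the time-weighted energy estimates for higher-order derivatives established in Section~\ref{section 3}, and then to close the optimal rates \eqref{1.6}--\eqref{1.8} through a Duhamel bootstrap. Working in $(\rho-\rho_*, m, n-n_*, M)$ is essential: with these variables the drag $\rho n(\omega-u)$ linearizes to the conservative combination $\rho_* M - n_* m$, so the slow, non-conservative coupling present in the velocity formulation is fully absorbed, as emphasized in the introduction.

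First, I would Fourier-transform the linearization of \eqref{1.1} in these variables around $(\rho_*,0,n_*,0)$ and carry out a low/high frequency eigenvalue expansion. In the low-frequency regime $|\xi|\ll 1$ the four eigenvalues should split into two ``heat-like'' branches of order $-c\xi^2+O(\xi^4)$ and two ``drag-damped'' branches of order $-\mu_0+O(\xi^2)$, with the damped direction carrying the combination $m/\rho_*-M/n_*\sim u-\omega$. In the high-frequency regime the viscous term $M_{xx}$ combined with the drag matrix gives pure exponential decay. The resulting Green's-function estimates yield, for $L^1\cap L^2$ initial data,
\begin{equation*}
\|\partial_x^k \mathbb{S}(t)U_0\|\le C(1+t)^{-\frac14-\frac{k}{2}}\|U_0\|_{L^1}+Ce^{-ct}\|\partial_x^k U_0\|,
\end{equation*}
with an additional $(1+t)^{-1/2}$ gain on the projection onto the drag direction; this extra factor is precisely what produces the faster rate \eqref{1.8} for $u-\omega$.

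Next, I would use Duhamel's formula
\begin{equation*}
U(t)=\mathbb{S}(t)U_0+\int_0^t \mathbb{S}(t-s)N(U(s))\,ds,
\end{equation*}
where $U=(\rho-\rho_*,m,n-n_*,M)^{t}$ and $N(U)$ collects the quadratic and higher nonlinearities such as $(\rho u^2)_x$, $(n\omega^2)_x$, $((n-n_*)\omega_x)_x$, and the cubic correction to the drag. Introducing the decay functional
\begin{equation*}
\mathcal{M}(t):=\sup_{0\le s\le t}\Bigl[(1+s)^{\frac14}\|U(s)\|+(1+s)^{\frac34}\|U_x(s)\|+(1+s)^{\frac34}\|(u-\omega)(s)\|\Bigr],
\end{equation*}
I would estimate $\|N(U)\|_{L^1}$ by Cauchy--Schwarz in the form $\|fg\|_{L^1}\le \|f\|\,\|g\|$, combine with the 1D Sobolev inequality \eqref{Sob}, and plug in the putative rates together with the non-optimal second-derivative bound from Section~\ref{section 3} to obtain $\mathcal{M}(t)\le C(F_0+\mathcal{E}_0)$.

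The main obstacle is exactly the 1D slow decay: the Duhamel integral $\int_0^t(1+t-s)^{-1/4}\|N(U)(s)\|_{L^1}\,ds$ only converges if every quadratic product entering $N(U)$ decays strictly faster than $(1+s)^{-3/4}$, whereas a generic quadratic term formed from two factors of rate $(1+s)^{-1/4}$ sits on the borderline. Closing the bootstrap therefore requires not only the self-improving $L^2$ bounds on $U$ and $U_x$, but also an a priori decay of $\|U_{xx}\|$, accessed through Gagliardo--Nirenberg $\|U_x\|_\infty \lesssim \|U_x\|^{1/2}\|U_{xx}\|^{1/2}$; this is the precise role of the time-weighted estimate $\|U_{xx}(t)\|\le C\mathcal{E}_0(1+t)^{-5/4}$ produced in Section~\ref{section 3}, which does not follow from the linear spectral analysis alone and is the genuinely new ingredient needed in 1D. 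Once this input is in place the bootstrap closes, and the rates \eqref{1.6}--\eqref{1.8} follow directly from the definition of $\mathcal{M}$ together with the second-derivative estimate.
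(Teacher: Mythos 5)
Your overall plan --- spectral analysis in the momentum variables $(m,M)$ to cancel the non-conservative drag, time-weighted energy estimates for higher derivatives, then a Duhamel bootstrap --- is the paper's strategy, and the intuition about why 1D is harder is right. But two specific claims in the sketch misattribute where key estimates come from, and they would block the argument if carried out literally.

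First, the improved Green's-function estimate \eqref{3.07} for sources of the antisymmetric form $h=(0,g,0,-g)^\top$ is not what yields \eqref{1.8}. That estimate controls how strongly such a \emph{source} feeds back into the full solution, and in the paper it is used to sharpen the Duhamel contribution of the antisymmetric (non-conservative) block of the nonlinearity $G$ in \eqref{3.1}. It says nothing directly about how fast the $u-\omega$ \emph{component} of the solution decays. The faster rate \eqref{1.8} is obtained instead by a separate scalar argument: subtracting the velocity equations gives $(u-\omega)_t+(\rho_*+n_*)(u-\omega)=R$ (cf. \eqref{3.16}--\eqref{3.17}), one integrates against the exponential kernel, and then inserts the already-proved optimal rates for $(\rho,u,n,\omega)$ and their derivatives into $R$. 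Reading \eqref{1.8} off the linear spectral estimate alone does not work.

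Second, and more seriously, you assert that Section \ref{section 3} ``produces'' $\|U_{xx}(t)\|\le C\mathcal{E}_0(1+t)^{-5/4}$ and that this is the a priori input needed to close the Duhamel bootstrap. It is not: Proposition \ref{thm 3.2} gives only the \emph{suboptimal} rate $(1+t)^{-1}$ for second derivatives, and Propositions \ref{thm 3.3}--\ref{thm 3.4} extend this to third and fourth derivatives at the suboptimal rates $(1+t)^{-3/2}$ and $(1+t)^{-2}$. The optimal rate $(1+t)^{-5/4}$ for $\partial_x^2$ is exactly the bound $N_2\lesssim F_0+\mathcal{E}_0$ which the Section \ref{section 4} spectral bootstrap is designed to \emph{prove}, under the a priori assumption that $N_2$ is merely bounded. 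Your functional $\mathcal{M}(t)$ tracks only $U$, $U_x$, and $u-\omega$; the paper's bootstrap includes $N_2$ as an unknown, and closing it uses the third- and fourth-derivative weighted estimates in an essential way, because the exponential-kernel integral $I_3$ in \eqref{4.8} involves $\|\partial_x^{k+1}(\text{quadratic products})\|$, which for $k=2$ reaches $\partial_x^4$ of the solution. A bootstrap equipped only with a $(1+t)^{-1}$ bound on $U_{xx}$ does not close at the level of second derivatives, so the higher-order weighted estimates are not optional bookkeeping --- they are the mechanism by which the 1D obstruction is overcome.
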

\begin{Remark}
The time decay rate in \eqref{1.6} for 1D Euler-Navier-Stokes equations \eqref{1.1} is the same as the classical 1D heat equation, even though the Euler part has only weak relaxation drag force but without strong dissipation effect. Moreover, \eqref{1.8} shows that the velocity difference $u-\omega$ exhibits a faster time decay rate than $u$ or $\omega$ itself.
\end{Remark}
\begin{Remark}

Compared with the multi-dimensional case (2D/3D), it is much harder to get the optimal time decay rate by direct spectrum method due to a slower convergence rate of the fundamental solution in the 1D case. To overcome this main difficulty, it is essential to first carry out time-weighted energy estimates (not optimal) for higher order derivatives, and based on these time-weighted estimates, we can close a priori assumptions and get the optimal time decay rate by spectrum analysis method. Moreover, due to the non-conserved form and insufficient decay rate of the coupled drag force terms between the two-phase flows, we essentially need to use momentum variables $(m= \rho u, M=n\omega)$, rather than velocity variables $(u, \omega)$ in the spectrum analysis, to fully cancel out those non-conserved and insufficiently time-decay drag force terms.

\end{Remark}

\section{The proof of Theorem \ref{thm 1.2}}\label{section 2}

In this section, we will prove Theorem \ref{thm 1.2}. First, we give the local existence of the solution to the system \eqref{1.1} or \eqref{2.2}.


\begin{pro} {\rm{(}}Local existence{\rm{)}}\label{thm 1.1}
Assume $(\rho_0-\rho_*, u_0, n_0 - n_{*}, \omega_0) \in H^{2}(\mathbb{R})$, and satisfying $\|(\rho_0-\rho_*, u_0, n_0 - n_{*}, \omega_0)\|_{H^{1}} \leq P_{1} $, $ \|(\rho_{0}, u_{0},  n_{0}, \omega_{0})_{xx}\| \leq P_{2} $ for $P_1, P_2$ are two positive constants, then there exists a positive constant $T_0=T_0(P_1, P_2)$ such that
the system \eqref{1.1}-\eqref{1.2} (or \eqref{2.2}-\eqref{2.3}) has a unique solution $(\rho, u, n, \omega)\in X(0,T_0)$ satisfying
$$\sup\limits_{t\in [0,T_0]}\|(\rho-\rho_*, u, n - n_{*}, \omega)(t,\cdot)\|_{H^{1}} \leq 2P_{1} $$  
and 
$$\sup\limits_{t\in [0,T_0]}\|(\rho, u, n, \omega)_{xx}(t,\cdot)\| \leq 2P_{2}. $$
\end{pro}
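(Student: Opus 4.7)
The plan is a standard linearization-and-iteration scheme applied to the symmetric form \eqref{2.2}, exploiting that the $(v,u)$-block is symmetric hyperbolic while the $(n,\omega)$-block has parabolic dissipation in $\omega$ through the density-dependent viscosity $(n\omega_x)_x$. I would define iterates $(v^{k+1},u^{k+1},n^{k+1},\omega^{k+1})$ by freezing the nonlinearities at the previous iterate $(v^k,u^k,n^k,\omega^k)$. Concretely, $(v^{k+1},u^{k+1})$ solves a linear symmetric hyperbolic system with coefficients built from $u^k,v^k,n^k,\omega^k$, for which Friedrichs' theory yields existence in $C([0,T];H^2)$; $n^{k+1}$ solves a linear transport equation $n^{k+1}_t+(n^{k+1}\omega^k)_x=0$, solvable by characteristics and automatically preserving positivity; and $\omega^{k+1}$ solves a linear parabolic equation obtained from the momentum equation after using the linearized continuity equation, which reads schematically $n^k\omega^{k+1}_t+n^k\omega^k\omega^{k+1}_x+n^{k+1}_x=(n^k\omega^{k+1}_x)_x+\rho^kn^k(u^k-\omega^{k+1})$ and is uniformly parabolic provided $n^k\geq n_*/2$. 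The zeroth iterate is taken to be the initial data extended in time (after a standard mollification so that all derivative estimates make sense at $t=0$).

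The core step is a uniform $H^2$ bound, independent of $k$, on the iterates. Applying $\partial_x^j$ for $j=0,1,2$ to each linearized equation, testing against the corresponding derivative of the unknown, integrating in $x$, and using Moser-type product/commutator estimates together with the Gagliardo--Nirenberg inequality \eqref{Sob} to control the nonlinear remainders, I would be led to a differential inequality of the schematic form
\begin{equation*}
\frac{d}{dt}\bigl(E^{k+1}_1(t)+E^{k+1}_2(t)\bigr)+\|\omega^{k+1}_x\|_{H^2}^2\leq C\,\Phi\bigl(E^k_1,E^k_2\bigr)\bigl(1+E^{k+1}_1+E^{k+1}_2\bigr),
\end{equation*}
where $E^{k+1}_1$ controls $\|(v^{k+1},u^{k+1},n^{k+1}-n_*,\omega^{k+1})\|_{H^1}^2$, $E^{k+1}_2$ controls the squared $L^2$-norm of the second derivatives, and $\Phi$ is polynomial. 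Induction on $k$ starting from $E^0_1\leq CP_1^2$ and $E^0_2\leq CP_2^2$, combined with Gronwall's inequality, yields the asserted bounds $\sup_{[0,T_0]}\|(\rho^{k+1}-\rho_*,u^{k+1},n^{k+1}-n_*,\omega^{k+1})\|_{H^1}\leq 2P_1$ and $\sup_{[0,T_0]}\|\partial_x^2(\cdot)\|\leq 2P_2$ for $T_0=T_0(P_1,P_2)$ chosen small enough, uniformly in $k$.

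To close the scheme I would prove contraction in a weaker norm. Setting $\delta^{k+1}=(v^{k+1}-v^k,u^{k+1}-u^k,n^{k+1}-n^k,\omega^{k+1}-\omega^k)$, the differences satisfy a linear symmetric-hyperbolic/parabolic system whose inhomogeneities are bilinear in $\delta^k$ and the iterates, so an $L^2$ energy estimate combined with the uniform $H^2$ bounds gives $\sup_{[0,T_0]}\|\delta^{k+1}\|_{L^2}\leq \tfrac{1}{2}\sup_{[0,T_0]}\|\delta^k\|_{L^2}$ after possibly shrinking $T_0$. Consequently the iterates converge strongly in $C([0,T_0];L^2)$, and interpolation with the uniform $H^2$ bound upgrades this to strong convergence in $C([0,T_0];H^{2-\epsilon})$ for any $\epsilon>0$, which is enough to pass to the limit in every nonlinear term and recover a solution in $X(0,T_0)$ with the stated bounds. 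Uniqueness follows from the same $L^2$ difference argument applied to two solutions sharing the same initial data.

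The main obstacle is controlling the density-dependent viscosity $(n^k\omega^{k+1}_x)_x$ in the top-order estimate for $\omega^{k+1}$. First, one must ensure $n^k\geq n_*/2$ on $[0,T_0]$, achieved by tracking $\|n^k-n_*\|_\infty$ through its transport equation and using $H^2\hookrightarrow L^\infty$ together with a sufficiently small $T_0$. Second, the commutators $[\partial_x^j,n^k]\omega^{k+1}_x$ produce terms such as $n^k_{xx}\omega^{k+1}_x$ and $n^k_x\omega^{k+1}_{xx}$ that must be absorbed into the dissipation $\int n^k(\omega^{k+1}_{xxx})^2\,dx$ via careful Gagliardo--Nirenberg interpolation. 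Third, because the momentum equation is not resolved as $\omega^{k+1}_t=\cdots$ but rather as $n^k\omega^{k+1}_t=\cdots$, one must systematically commute $\partial_x^j$ past the coefficient $n^k$ before performing the temporal energy identity; this is a technical but routine bookkeeping issue that does not change the overall structure of the argument.
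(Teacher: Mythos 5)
The paper does not prove Proposition \ref{thm 1.1}; it explicitly declares the proof standard and omits it. Your linearization--iteration scheme with uniform $H^2$ energy bounds, a contraction argument in the weaker $L^2$ topology, and interpolation to pass to the limit is precisely the standard argument the authors have in mind, and your careful identification of the two genuine technical points (preserving the lower bound $n^k\geq n_*/2$ so the parabolic step is uniformly elliptic, and absorbing the commutator terms $n^k_{xx}\omega^{k+1}_x$, $n^k_x\omega^{k+1}_{xx}$ into the viscous dissipation via Gagliardo--Nirenberg) shows you see where the density-dependent viscosity differs from the constant-viscosity case. The proposal is correct and consistent with what the paper intends.
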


The proof of Proposition \ref{thm 1.1} is standard and will be omitted for brevity. To prove 
Theorem \ref{thm 1.2}, it is sufficient to deduce the following uniform-in-time a priori estimate.

\begin{pro}  {\rm{(}}A priori estimate{\rm{)}}\label{a-priori}
Suppose that $(\rho, u, n, \omega) \in X(0,T) $ is a solution of \eqref{1.1}-\eqref{1.2} (or \eqref{2.2}-\eqref{2.3})  for some given $T>0$. There exists a positive constant $\mathcal{E}$
such that
\begin{align*}
	\sup\limits_{t\in [0,T]} \|(\rho_{xx}, u_{xx},n_{xx},\omega_{xx})(t,\cdot)\| \leq \mathcal{E}
\end{align*}
and a suitably small positive constant $\epsilon$ such that
\begin{equation}\label{assumption}
\begin{aligned}
\sup\limits_{t\in [0,T]} \|(\rho -\rho_*,u,n-n_{*},\omega)(t,\cdot)\|_{H^{1}(\mathbb{R})} \leq \epsilon,
\end{aligned}
\end{equation}
and  satisfying $\epsilon \mathcal{E} \leq \frac{1}{16}$,
then there exists a generic uniform positive constant $C_0$ such that the following a priori estimates hold uniformly for $t\in [0,T]$: 
\begin{equation}
\begin{aligned}
& \|(\rho -\rho_*, u, n-n_*, \omega)(t,\cdot)\|_{H^{1}}^{2} +\|(\rho_t, u_t)(t,\cdot)\|^{2}  \\
& + \int_{0}^{t}\big[\|(\rho_x, u_x, n_x, u_{\tau})(\tau,\cdot)\|^{2} +\|\omega_{xx}(\tau,\cdot)\|^{2} + \|(u-\omega)(\tau,\cdot)\|^{2}\big] d\tau  \leq C_0\epsilon_0^{2},  \\
& \|(\rho_{xx}, u_{xx},n_{xx}, \omega_{xx}, \rho_{xt}, u_{xt})(t,\cdot)\|^{2} \\
& + \int_{0}^{t} (\|(\rho_{xx}, u_{xx}, n_{xx}, u_{x\tau})(\tau,\cdot)\|^{2} + \|\omega_{xxx}(\tau,\cdot)\|^{2} )d\tau \leq C_0(\epsilon_{0}^{2}+{\mathcal{E}^2_0}).
\end{aligned}
\end{equation}

\end{pro}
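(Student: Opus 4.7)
The plan is to work with the symmetric reformulation \eqref{2.2}-\eqref{2.3} and to close a continuity argument via $L^{2}$ energy estimates at three spatial derivative levels ($k=0,1,2$), added with suitable positive weights. The two sources of dissipation are the Navier-Stokes viscosity $(n\omega_{x})_{x}$ in $\eqref{2.2}_{4}$ and the drag coupling $\rho n(u-\omega)$ between the two momentum equations. Testing $\eqref{2.2}_{4}$ against $\omega$ produces $\int n\omega_{x}^{2}\,dx \sim \|\omega_{x}\|^{2}$, while summing $\eqref{2.2}_{2}$ tested against $u$ with $\eqref{2.2}_{4}$ tested against $\omega$ produces $\int \rho n(u-\omega)^{2}\,dx$. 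The $v$-norm is controlled by testing $\eqref{2.2}_{1}$ against $v$, and the $n-n_{*}$ norm by testing $\eqref{2.2}_{3}$ against an entropy-type multiplier $h'(n)-h'(n_{*})$ chosen so that the transport term becomes conservative. The analogous construction at $k=1,2$ after differentiating and testing against the corresponding derivatives yields the higher-order dissipations $\|\omega_{xx}\|^{2}+\|(u-\omega)_{x}\|^{2}$ and $\|\omega_{xxx}\|^{2}+\|(u-\omega)_{xx}\|^{2}$. Cubic nonlinearities at $k=0,1$ are absorbed using \eqref{assumption} and the Sobolev embedding $\|f\|_{\infty} \lesssim \|f_{x}\|^{1/2}\|f\|^{1/2}$ of Lemma \ref{lma 1.2}.

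The dissipations $\|\rho_{x}\|^{2}$, $\|u_{x}\|^{2}$, $\|n_{x}\|^{2}$, $\|u_{\tau}\|^{2}$ appearing in the conclusion are not produced directly, since the Euler part carries no viscosity; they are recovered by using the equations as pointwise identities. From $\eqref{2.2}_{2}$, the identity $\sigma_{*}v_{x} = -u_{t} - uu_{x} - \tfrac{\gamma-1}{2}vv_{x} + n(\omega-u)$ gives $\|v_{x}\|^{2} \lesssim \|u_{t}\|^{2} + \|u-\omega\|^{2}$ up to $\epsilon$-small terms, which, via the equivalence of $v$ and $\rho-\rho_{*}$, controls $\|\rho_{x}\|^{2}$. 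Rewriting $\eqref{2.2}_{4}$ in non-conservative form as $n\omega_{t} + n\omega\omega_{x} + n_{x} = (n\omega_{x})_{x} + \rho n(u-\omega)$ similarly bounds $\|n_{x}\|^{2}$ by $\|\omega_{xx}\|^{2} + \|u-\omega\|^{2}$ plus absorbable terms. The quantity $\|u_{t}\|^{2}$ itself is extracted by testing $\eqref{2.2}_{2}$ against $u_{t}$; the troublesome crossing term $\sigma_{*}\int v_{x}u_{t}\,dx$ is shifted into the total time derivative $-\tfrac{d}{dt}\sigma_{*}\int v u_{x}\,dx$ by integrating by parts in both $x$ and $t$, with the leftover $-\sigma_{*}\int v_{t}u_{x}\,dx$ handled via $\eqref{2.2}_{1}$. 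Finally $\|u_{x}\|^{2} \leq 2\|\omega_{x}\|^{2} + 2\|(u-\omega)_{x}\|^{2}$ recovers $u_{x}$ from the combined $k=0,1$ dissipations.

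For the $k=2$ estimate no smallness of second derivatives is assumed, so every nonlinearity must be arranged so that the $L^{\infty}$ norm falls on a factor of $H^{1}$ size $O(\epsilon)$ and the $L^{2}$ norm on whichever factor carries the top derivative; e.g. $\|u\omega_{xxx}\|^{2} \leq \|u\|_{\infty}^{2}\|\omega_{xxx}\|^{2} \leq C\epsilon^{2}\|\omega_{xxx}\|^{2}$ is absorbed by the top-order dissipation. The delicate piece is the density-dependent viscous term $\partial_{x}^{2}\bigl((n\omega_{x})_{x}\bigr)$, which after testing against $\omega_{xx}$ and integrating by parts produces cross terms of the form $\int n_{xx}\omega_{x}\omega_{xxx}\,dx$ and $\int n_{x}\omega_{xx}\omega_{xxx}\,dx$; these are controlled by $\|\omega_{x}\|_{\infty}\|n_{xx}\|\|\omega_{xxx}\|$ and $\|n_{x}\|_{\infty}\|\omega_{xx}\|\|\omega_{xxx}\|$ respectively, where the $L^{\infty}$ factors are $\epsilon$-small by Lemma \ref{lma 1.2} together with the $k=0,1$ bounds. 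The main obstacle is precisely closing this top layer without smallness of second derivatives: the quantitative hypothesis $\epsilon \mathcal{E} \leq \tfrac{1}{16}$ in the proposition is exactly what absorbs the cross terms carrying one large second derivative and one small low-order factor. Adding the three derivative-level estimates with properly chosen positive weights and applying a standard Gronwall argument then yields the bounds $C_{0}\epsilon_{0}^{2}$ at the $H^{1}$ level and $C_{0}(\epsilon_{0}^{2}+\mathcal{E}_{0}^{2})$ at the $H^{2}$ level as stated.
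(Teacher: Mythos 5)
Your plan to recover $\int_{0}^{t}\|n_{x}\|^{2}\,d\tau$ from the pointwise identity obtained by rewriting $\eqref{2.2}_{4}$ in non-conservative form does not close. That identity reads $n_{x} = -n\omega_{t} - n\omega\omega_{x} + (n\omega_{x})_{x} + \rho n(u-\omega)$, so a genuine $\|\omega_{t}\|^{2}$ term appears, and this term is \emph{linear} in the perturbation, not a cubic error that smallness can absorb. There is no independent estimate of $\int_{0}^{t}\|\omega_{t}\|^{2}\,d\tau$ at your disposal: substituting $\omega_{t} = -\omega\omega_{x} - n_{x}/n + (n\omega_{x})_{x}/n + \rho(u-\omega)$ just reproduces $n_{x}$, and testing \eqref{2.012} against $\omega_{t}$ generates the linear cross term $\int n_{x}\omega_{t}\,dx$, which again requires $\|n_{x}\|$. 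This is in contrast to the Euler side, where $|v_{x}|\leq C(|u_{t}|+\dots)$ works because $\|u_{t}\|^{2}$ is supplied by a genuinely independent test. The paper resolves the difficulty with a Bresch--Desjardins (effective velocity) identity, Lemma \ref{thm 2.2}: from $\eqref{1.1}_{3}$ one has $n(\ln n)_{xt}+n\omega(\ln n)_{xx}+(n\omega_{x})_{x}=0$, and adding this to \eqref{2.012} cancels the viscous term \emph{exactly}, giving the first-order equation $n\bigl(\omega+(\ln n)_{x}\bigr)_{t}+n\omega\bigl(\omega+(\ln n)_{x}\bigr)_{x}+n_{x}=\rho n(u-\omega)$ for the effective velocity $\omega+(\ln n)_{x}$. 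Testing against $\omega+(\ln n)_{x}$ produces $\int \frac{n_{x}^{2}}{n}\,dx$ as a dissipation, with the off-diagonal terms absorbed by the entropy estimate of Lemma \ref{thm 2.1} together with \eqref{2.11}. This is the essential structural input your proposal is missing; without it, neither the $n_{x}$ estimate nor the downstream $u_{x}$ estimate (whose $k=1$ identity contains $\int n_{x}(\omega-u)u_{x}\,dx$, closable only with $\int_{0}^{t}\|n_{x}\|^{2}\,d\tau$ already in hand) can be established.

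Two smaller remarks. First, the paper never needs $\|u_{x}\|^{2}\leq 2\|\omega_{x}\|^{2}+2\|(u-\omega)_{x}\|^{2}$: at the $k=1$ level, testing $\partial_{x}\eqref{2.2}_{2}$ against $u_{x}$ makes the $-nu$ part of the drag contribute $-\int nu_{x}^{2}\,dx$ directly (see \eqref{2.62}), so $\int_{0}^{t}\|u_{x}\|^{2}\,d\tau$ comes for free without establishing a $\|(u-\omega)_{x}\|^{2}$ dissipation, which indeed does not appear in the $H^{1}$ part of the conclusion. Second, your treatment of $\|u_{t}\|^{2}$ --- testing $\eqref{2.2}_{2}$ against $u_{t}$ and shifting $\sigma_{*}\int v_{x}u_{t}\,dx$ into a total time derivative --- is a Kanel-type variant of the paper's route, which instead differentiates $\eqref{2.2}_{1},\eqref{2.2}_{2}$ in $t$ and tests against $(v_{t},u_{t})$ to extract the drag dissipation $\int nu_{t}^{2}\,dx$; either works, and that part of your outline is fine.
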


The proof of Proposition \ref{a-priori} occupies the main part of the remaining of the present section. First, we have

\begin{lma}{\rm{(}}Entropy estimate{\rm{)}}\label{thm 2.1}
Under the hypothesis and a priori assumptions in Proposition \ref{a-priori}, there exists $C>0$ such that the following estimate holds:
\begin{equation}
\begin{aligned}
&\|(\rho-\rho_*,u,n-n_{*},\omega)(t, \cdot)\|^2+\int_0^t\|(\omega_x, u-\omega)(\tau, \cdot)\|^2d\tau  \leq C\epsilon_0^{2}.
\end{aligned}
\end{equation}
\end{lma}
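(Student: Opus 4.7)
The plan is to derive this $L^2$-type entropy/energy estimate by constructing a convex physical free energy for the two-phase system and following its evolution. I will introduce the relative mechanical free energy
\begin{equation*}
E_*(\rho,u,n,\omega) := \tfrac{1}{2}\rho u^2 + \tfrac{1}{2} n\omega^2 + \Pi_1(\rho\,|\,\rho_*) + \Pi_2(n\,|\,n_*),
\end{equation*}
where $\Pi_1(\rho\,|\,\rho_*) := \Pi_1(\rho) - \Pi_1(\rho_*) - \Pi_1'(\rho_*)(\rho-\rho_*)$ with internal-energy density $\Pi_1(\rho)=\rho^{\gamma}/(\gamma-1)$ (so that $\rho \Pi_1'(\rho)-\Pi_1(\rho)=p(\rho)$), and similarly $\Pi_2(n\,|\,n_*)$ with $\Pi_2(n)=n\ln n - n$ corresponding to the isothermal pressure $p_2(n)=n$. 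Strict convexity of $\Pi_i$ together with the a priori smallness \eqref{assumption} (which keeps $\rho$ near $\rho_*$ and $n$ near $n_*$, as can be read off from Sobolev embedding \eqref{Sob}) will give $E_* \simeq (\rho-\rho_*)^2+u^2+(n-n_*)^2+\omega^2$ pointwise.

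Next I would compute $\partial_t E_*$ along the system \eqref{1.1}: multiplying the momentum equation $(\ref{1.1})_2$ by $u$ and using $(\ref{1.1})_1$ gives $\partial_t(\tfrac12\rho u^2)+\partial_x(\cdots)+u\,p(\rho)_x = \rho n u(\omega-u)$; the pressure-work term $u\,p(\rho)_x$ combines with $\partial_t \Pi_1(\rho)$ produced from $(\ref{1.1})_1$ to form a perfect $x$-divergence, and the subtracted affine piece $-\Pi_1'(\rho_*)(\rho-\rho_*)$ is harmless because $(\rho-\rho_*)_t=-(\rho u)_x$ is itself a divergence. The analogous manipulation for the $(n,\omega)$ block, together with integration by parts on the viscous term $\omega(n\omega_x)_x=-n\omega_x^2+\partial_x(n\omega\omega_x)$, produces the identity
\begin{equation*}
\partial_t E_* + \partial_x \mathcal{F}_* = -\,n\,\omega_x^2 - \rho n (u-\omega)^2,
\end{equation*}
since the two cross-drag terms $\rho n u(\omega-u)+\rho n\omega(u-\omega)=-\rho n(u-\omega)^2$ combine exactly. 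This identity is the crucial reason for the specific dissipation the lemma asserts.

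I will then integrate the identity over $\mathbb{R}\times[0,t]$ (the boundary flux $\mathcal{F}_*$ vanishes as $|x|\to\infty$ by the solution-space definition $X(0,T)$). Using the equivalence $E_*\simeq (\rho-\rho_*)^2+u^2+(n-n_*)^2+\omega^2$ and the lower bounds $n\geq n_*/2$ and $\rho n\geq \rho_* n_*/2$ (valid once $\epsilon$ is small), the dissipation dominates $c(\omega_x^2+(u-\omega)^2)$, and the initial energy is bounded by $C\epsilon_0^2$, yielding the claimed inequality. The only real bookkeeping hurdle is checking that all nonlinear convective terms close up into a pure divergence $\partial_x\mathcal{F}_*$ when the relative-entropy form is used; once this is verified, positivity of $E_*$ and of the dissipation make the closure immediate and no Gronwall step is needed here.
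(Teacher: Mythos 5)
Your proposal is correct and coincides with the paper's argument: both build the relative mechanical free energy $\tfrac12\rho u^2+Q_1(\rho|\rho_*)+\tfrac12 n\omega^2+Q_2(n|n_*)$ (your $\Pi_1(\rho|\rho_*),\Pi_2(n|n_*)$ are exactly the paper's $Q_1,Q_2$), show its evolution reduces to a pure $x$-divergence plus the dissipation $-n\omega_x^2-\rho n(u-\omega)^2$ via the exact cancellation $\rho n u(\omega-u)+\rho n\omega(u-\omega)=-\rho n(u-\omega)^2$, and integrate over $\mathbb{R}\times[0,t]$. The only difference is presentational: the paper carries out the multiplier computations explicitly term by term, whereas you invoke the standard relative-entropy structure and the $L^\infty$ smallness from \eqref{Sob} to make $E_*$ comparable to the squared perturbation.
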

\begin{proof}\color{black}
By virtue of $\eqref{1.1}_1$ and $\eqref{1.1}_2$,
we can get
\begin{equation}\label{2.04}
\rho u_t+\rho u u_x+p(\rho)_x=\rho n(\omega-u).
\end{equation}
Multiplying \eqref{2.04} by $u$ and using $\eqref{1.1}_1$ yield
\begin{equation}\label{2.09}
\left(\rho \frac{u^2}{2}\right)_t+\left(\rho u \frac{u^2}{2}\right)_x+\left(\rho^\gamma-\rho_{*}^\gamma\right)_x u=\rho n(w-u)u.
\end{equation}
Multiplying $ \eqref{1.1}_1 $ by $ \frac{\gamma}{\gamma - 1}(\rho^{\gamma
	 -1} - \rho_{*}^{\gamma - 1}) $, we have
\begin{equation}\label{2.014}
Q_1(\rho,\rho_*)_t+\left[Q_1(\rho,\rho_*) u\right]_x + \left( \rho^\gamma - \rho_*^\gamma \right) u_x=0,
\end{equation}
where
$$
Q_1(\rho,\rho_*)=\frac{1}{\gamma-1}\left[\rho^{\gamma}-\rho_{*}^{\gamma}-\gamma\rho_*^{\gamma-1}(\rho-\rho_*)\right].
$$
Combining \eqref{2.09} and \eqref{2.014} gives
\begin{equation}\label{2.1}
\begin{aligned}
&\left[\rho \frac{u^{2}}{2} + Q_1(\rho,\rho_*)\right]_t
 + \left[\rho u\frac{u^{2}}{2} + Q_1(\rho,\rho_*) u+(\rho^{\gamma}-\rho_{*}^{\gamma})u \right]_x = \rho n(\omega-u)u.
\end{aligned}
\end{equation}
Similarly, by $\eqref{1.1}_3$ and $\eqref{1.1}_4$, we have
\begin{equation}\label{2.012}
\begin{aligned}
n\omega_t + n\omega\omega_x + n_x = (n\omega_x)_x + \rho n(u-\omega).
\end{aligned}
\end{equation}
Multiplying \eqref{2.012} by $ \omega $ and using $\eqref{1.1}_3$ yield
\begin{equation}\label{2.7}
\begin{aligned}
\left(n \frac{\omega^{2}}{2}\right)_t + \left(n\omega \frac{\omega^{2}}{2}\right)_x + (n-n_{*})_x\omega - (n\omega \omega_x)_x + n\omega_x^{2} =  \rho n(u - \omega) \omega.
\end{aligned}
\end{equation}
Multiplying $\eqref{1.1}_3$ by $ [(1 + \ln n) - (1+ \ln n_{*})]$ gives
\begin{equation}\label{2.11}
Q_2(n,n_*)_t+\left[Q_2(n,n_*) \omega\right]_x + \left( n- n_* \right) \omega_x=0,
\end{equation}
where
$$
Q_2(n,n_*)=n \ln n - n_*\ln n_{*} - (1 + \ln n_{*})(n - n_{*}).
$$
By \eqref{2.7} and \eqref{2.11}, we have
\begin{equation}\label{2.13}
\big[n\frac{\omega^{2}}{2} + Q_2(n,n_*) \big]_t + \left[n\omega \frac{\omega^{2}}{2}+Q_2(n,n_*) \omega+(n - n_{*})\omega \right]_x +n \omega_x^{2} = \rho n(u-\omega)\omega.
\end{equation}
Combining \eqref{2.1} and \eqref{2.13} and then integrating over $\mathbb{R}\times[0,t]$, we obtain
\begin{equation}
\begin{aligned}
&\int_{\mathbb{R}}\big[\rho \frac{u^{2}}{2} + Q_1(\rho,\rho_*)
+ n\frac{\omega^{2}}{2}+ Q_2(n,n_*) \big](t,x) dx \\
& + \int_{0}^{t}\int_{\mathbb{R}} \big[n\omega_x^{2} +\rho n(u-\omega)^{2}\big] dxd\tau \\
& = \int_{\mathbb{R}} [\rho_{0} \frac{u_{0}^{2}}{2} + Q_1(\rho_0,\rho_*)
+ n_0\frac{\omega_0^{2}}{2}+ Q_2(n_0,n_*) ] dx.
 \end{aligned}
\end{equation}
Consequently, the proof of Proposition \ref{thm 2.1} is completed.
\end{proof}

\subsection{Estimate of $ n_x $}
\begin{lma}\label{thm 2.2}
Under the hypothesis and a priori assumptions in Proposition \ref{a-priori}, there exists $C>0$ such that the following estimate holds:
\begin{equation}
\|n_x(t,\cdot)\|^{2}+ \int_{0}^{t} \|n_x(\tau,\cdot)\|^{2}d\tau \leq C \epsilon_{0}^{2}.
\end{equation}
\end{lma}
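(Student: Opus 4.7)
The plan is to extract the dissipation of $n_x$ by testing the momentum equation for $n\omega$ against a multiplier that produces $n_x^2$. Writing $\eqref{1.1}_4$ in non-conservative form as
\[
n\omega_t + n\omega\omega_x + n_x = (n\omega_x)_x + \rho n(u-\omega),
\]
dividing by $n$ (which is bounded above and below thanks to the $H^{1}$-smallness \eqref{assumption} together with Sobolev embedding), and then multiplying by $n_x$ and integrating over $\mathbb{R}$, the pressure contribution produces the coercive term $\int \frac{n_x^2}{n}\,dx \gtrsim c\|n_x\|^{2}$, which is the target dissipation.

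For the time derivative piece I would write
\[
\int \omega_t n_x\, dx = \frac{d}{dt}\int \omega n_x\, dx - \int \omega\, n_{xt}\, dx,
\]
and use $n_{xt} = -(n\omega)_{xx}$ from spatially differentiating $\eqref{1.1}_3$. A double integration by parts yields $\int \omega (n\omega)_{xx}\, dx = -\int n\omega_x^2\, dx - \int n_x\omega\omega_x\, dx$; the cubic term cancels exactly with the convective contribution $\int \omega\omega_x n_x\, dx$, while the residual $\int n\omega_x^2\,dx$ is integrable in time by Lemma \ref{thm 2.1}. The drag term $\int \rho(u-\omega) n_x\,dx$ is absorbed by Young's inequality together with the $\|u-\omega\|^2\in L^1_t$ bound of Lemma \ref{thm 2.1}.

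The viscous contribution is the delicate piece: expanding $(n\omega_x)_x = n\omega_{xx} + n_x\omega_x$ produces $\int \omega_{xx} n_x\,dx + \int \frac{n_x^2\omega_x}{n}\,dx$. The cubic part is absorbed via $\|\omega_x\|_\infty \leq C\|\omega_x\|^{1/2}\|\omega_{xx}\|^{1/2}\leq C\sqrt{\epsilon\mathcal{E}}$ using the standing smallness $\epsilon\mathcal{E}\leq \tfrac{1}{16}$; the linear part is handled by Young's inequality as $\delta\|n_x\|^2 + C_\delta\|\omega_{xx}\|^2$, with the first half swallowed by the dissipation and the second retained on the right-hand side. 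Integrating in time then gives a control of $\int_0^t \|n_x\|^2\,d\tau$ modulo a remainder $\int_0^t \|\omega_{xx}\|^2\,d\tau$ plus the endpoint term $|\int \omega n_x\,dx| \leq \|\omega\|\|n_x\|$, which is bounded using $\|\omega\|\leq C\epsilon_0$ from Lemma \ref{thm 2.1} and absorption.

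The main obstacle is the unavoidable $\|\omega_{xx}\|^2$ remainder: Lemma \ref{thm 2.2} cannot be closed from the entropy estimate alone and must be proved in tandem with the $H^1$ energy estimate for $\omega$ (obtained by testing $\eqref{1.1}_4$ against a multiplier such as $\omega_{xx}$ or $-\omega_{xx}/n$), whose viscous term conversely produces the $\|\omega_{xx}\|^2$ dissipation at the cost of a $\|n_x\|^2$ term of opposite sign. Under the smallness $\epsilon\mathcal{E}\leq 1/16$, these two estimates couple and close to give the claimed bound, and the pointwise-in-time control of $\|n_x(t)\|^2$ then follows from the combined energy functional rather than from this lemma in isolation.
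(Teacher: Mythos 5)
Your strategy is genuinely different from the paper's, and while the direction is reasonable, it has a real gap. The paper does not close Lemma \ref{thm 2.2} through a coupled system with the $\omega_x$ estimate; it instead uses the \emph{effective velocity} $\omega + (\ln n)_x$ (the Bresch--Desjardins entropy structure for density-dependent viscosity). Adding $\eqref{1.1}_3$ differentiated once (i.e.\ \eqref{2.21}) to the divided momentum equation \eqref{2.012} cancels the viscous term $(n\omega_x)_x$ \emph{exactly}, producing the viscosity-free evolution \eqref{2.23} for $\omega + (\ln n)_x$. Testing against $\omega + (\ln n)_x$ then yields an energy functional $\int \tfrac{n}{2}(\omega + (\ln n)_x)^2\,dx$ whose dissipation is precisely $\int \tfrac{n_x^2}{n}\,dx$, and the only source term left after combining with the entropy identities \eqref{2.11}, \eqref{2.1} is $\rho(u-\omega)n_x$, which is handled by Young's inequality and the already-established $\|u-\omega\|^2 \in L^1_t$ from Lemma \ref{thm 2.1}. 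There is no $\|\omega_{xx}\|^2$ remainder at all, so the lemma \emph{can} be closed in isolation from the entropy estimate; your claim that it cannot is the central miss.

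Your approach of multiplying the divided momentum equation by $n_x$ does generate the correct dissipation and the cubic cancellation you observed, but it leaves the $\int \omega_{xx}n_x\,dx$ cross term, which cannot be made small: after Young's inequality it costs a $\|\omega_{xx}\|^2$ term with an $O(1)$ coefficient. Your proposal to close this against the $\omega_x$-energy estimate could in principle work for the time-integrated part $\int_0^t\|n_x\|^2\,d\tau$, but the closing condition is an algebraic constraint of the form $C_1 C_2 < 1$ on the two Young's-inequality constants from the linear cross couplings $\int\omega_{xx}n_x$ and $\int(\ln n)_x\omega_{xx}$, which has nothing to do with the smallness $\epsilon\mathcal{E}\leq \tfrac{1}{16}$; that smallness only controls the cubic residuals. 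You would need to verify the product of those constants explicitly, since both arise from coefficient-$O(1)$ dissipations, and you have not done so.

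More seriously, your argument does not produce the pointwise-in-time bound $\|n_x(t,\cdot)\|^2\leq C\epsilon_0^2$ that Lemma \ref{thm 2.2} asserts. The time-derivative in your identity is $\frac{d}{dt}\int\omega n_x\,dx$, giving the endpoint term $\int\omega(t)n_x(t)\,dx = -\int\omega_x(t)(n-n_*)(t)\,dx$, which is bounded but not coercive in $\|n_x(t)\|^2$; neither does the $\omega_x$-estimate contribute a $\|n_x(t)\|^2$ term. The "combined energy functional" you invoke is $\|\omega_x(t)\|^2 + \int\omega(t)n_x(t)\,dx$, and that does not control $\|n_x(t)\|^2$. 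In the paper's proof the pointwise bound falls out automatically because the stored energy $\int\tfrac{n}{2}(\omega + (\ln n)_x)^2\,dx$, after the good-sign expansion and the triangle inequality with $\|\omega(t)\|\leq C\epsilon_0$ from Lemma \ref{thm 2.1}, gives $\|n_x(t)\|\leq C\epsilon_0$ directly. To repair your route you would need an additional identity for $\tfrac{d}{dt}\|n_x\|^2$, e.g.\ from testing $n_{xt}=-(n\omega)_{xx}$ against $n_x$ and integrating by parts to avoid $n_{xx}$; as written, that step is missing.
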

\begin{proof}
By $ \eqref{1.1}_3 $, we have
\begin{equation}\label{2.21}
n(\ln n)_{xt} + n\omega (\ln n)_{xx} + (n\omega_x)_x = 0,
\end{equation}
which together with \eqref{2.012} yield that
\begin{equation}\label{2.23}
n\big(\omega + (\ln n)_x\big)_t + n\omega \big(\omega +(\ln n)_x\big)_x + n_x = \rho n(u-\omega).
\end{equation}
Multiplying \eqref{2.23} by $ \omega + (\ln n)_x $ and using $\eqref{1.1}_3$, we obtain
\begin{equation}\label{2.26}
\left[n \frac{(\omega + (\ln n)_x)^{2}}{2}\right]_t + \left[n\omega \frac{(\omega + (\ln n)_x)^{2}}{2}\right]_x + \omega n_x + \frac{n_x^{2}}{n}= \rho n(u-\omega)[\omega + (\ln n)_x],
\end{equation}
which together with \eqref{2.11} and \eqref{2.1} implies that
\begin{equation}\label{2.28}
\begin{array}{ll}
\di \left[n \frac{(\omega + (\ln n)_x)^{2}}{2}+Q_2(n,n_*)+\rho \frac{u^{2}}{2} + Q_1(\rho,\rho_*)\right]_t + \left[n\omega \frac{(\omega + (\ln n)_x)^{2}}{2}\right.
\\[3mm]
\di  \left.
+ Q_2(n,n_*) \omega+\rho u\frac{u^{2}}{2} + Q_1(\rho,\rho_*) u+(\rho^{\gamma}-\rho_{*}^{\gamma})u \right]_x + \left[( n- n_*) \omega\right]_x + \frac{n_x^{2}}{n}\\[3mm]
\di +\rho n(u-\omega)^2 =\rho n(u-\omega) n_x.
\end{array}
\end{equation}
Integrating \eqref{2.28} over $ \mathbb{R} \times [0,t]$ with respect to $x, t$ and using Cauchy inequality, we have 
\begin{equation}\label{2.31}
\begin{aligned}
& \int_{\mathbb{R}}\left[n \frac{(\omega + (\ln n)_x)^{2}}{2} + Q_2(n,n_*)+\rho \frac{u^{2}}{2} + Q_1(\rho,\rho_*)\right](t,x) dx   + \int_{0}^{t}\int_{\mathbb{R}}\frac{n_x^{2}}{n} dxd\tau \\
\leq&\int_{\mathbb{R}}\left[n_{0} \frac{(\omega_{0} + (\ln n_{0})_x)^{2}}{2} + Q_2(n_0,n_*)\right] dx + \int_{0}^{t}\int_{\mathbb{R}}\rho (u-\omega)n_x dxd\tau\\
&\leq \int_{\mathbb{R}}\left[n_{0} \frac{(\omega_{0} + (\ln n_{0})_x)^{2}}{2} + Q_2(n_0,n_*)\right] dx + C\int_{0}^{t}\|u-\omega\|\|n_x\|d\tau.
\end{aligned}
\end{equation}
By virtue of Young's inequality and Lemma \ref{thm 2.1}, we can prove Lemma \ref{thm 2.2}.
 \end{proof}

\subsection{Estimate of $ \omega_x $}
\begin{lma}\label{thm 2.3}
Under the hypothesis and a priori assumptions in Proposition \ref{a-priori}, there exists $C>0$ such that the following estimate holds:
\begin{equation}
\|\omega_x(t,\cdot)\|^{2} + \int_{0}^{t}\|\omega_{xx}(\tau,\cdot)\|^{2} d\tau \leq C\epsilon_0^{2}.
\end{equation}
\end{lma}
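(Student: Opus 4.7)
The plan is to work with the non-conservative form of \eqref{2.012},
\[
\omega_t + \omega\omega_x + \frac{n_x}{n} = \frac{(n\omega_x)_x}{n} + \rho(u-\omega),
\]
which is legitimate since $n$ stays uniformly bounded below under \eqref{assumption} (via the Sobolev embedding \eqref{Sob} applied to $n-n_*$). Multiplying by $-\omega_{xx}$ and integrating over $\mathbb{R}$ yields an energy identity whose leading pieces are $\tfrac{1}{2}\tfrac{d}{dt}\|\omega_x\|^2$ (from integrating $-\omega_t\omega_{xx}$ by parts in $x$) and the clean parabolic dissipation $\|\omega_{xx}\|^2$ (from the $-\omega_{xx}^2$ part of $-\int (n\omega_x)_x\omega_{xx}/n\,dx$), together with four error terms coming from convection, pressure, the viscosity commutator, and drag.

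The main obstacle is the linear pressure-type term $\int\tfrac{n_x}{n}\omega_{xx}\,dx$, which has no intrinsic smallness and must be absorbed by Cauchy--Schwarz against the dissipation,
\[
\Bigl|\int\frac{n_x}{n}\omega_{xx}\,dx\Bigr| \leq \tfrac{1}{8}\|\omega_{xx}\|^2 + C\|n_x\|^2,
\]
after which Lemma \ref{thm 2.2} delivers $\int_0^t\|n_x\|^2\,d\tau\leq C\epsilon_0^2$. This is essentially the same mechanism that made Lemma \ref{thm 2.2} work through the auxiliary variable $\omega+(\ln n)_x$, only traded in the opposite direction.

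The remaining three terms are routine. The convective contribution $\int\omega\omega_x\omega_{xx}\,dx$ is cubic and bounded by $\tfrac{1}{8}\|\omega_{xx}\|^2 + C\epsilon\|\omega_x\|^2$ via \eqref{Sob} and the a priori bound \eqref{assumption}. The viscosity commutator $\int\tfrac{n_x\omega_x}{n}\omega_{xx}\,dx$ is estimated using $\|\omega_x\|_\infty \leq C\|\omega_x\|^{1/2}\|\omega_{xx}\|^{1/2}$ together with the smallness of $\|n_x\|$, again absorbing $\tfrac{1}{8}\|\omega_{xx}\|^2$ and leaving a small term in $\|\omega_x\|^2$. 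The drag term yields $\tfrac{1}{8}\|\omega_{xx}\|^2 + C\|u-\omega\|^2$, with $\int_0^t\|u-\omega\|^2\,d\tau$ controlled by Lemma \ref{thm 2.1}. Integrating in time, choosing $\epsilon$ small to close all absorbed pieces, and invoking Gr\"onwall to handle the residual $\int \epsilon\|\omega_x\|^2\,d\tau$ contribution then yields the claimed bound $\|\omega_x(t,\cdot)\|^2 + \int_0^t\|\omega_{xx}(\tau,\cdot)\|^2\,d\tau \leq C\epsilon_0^2$.
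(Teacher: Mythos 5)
Your approach mirrors the paper's proof exactly: divide \eqref{2.012} by $n$, multiply by $-\omega_{xx}$, integrate to produce $\tfrac12\tfrac{d}{dt}\|\omega_x\|^2 + \|\omega_{xx}\|^2$ on the good side, and control the four error terms (convection, pressure, viscosity commutator, drag) by the Gagliardo--Nirenberg inequality \eqref{Sob} and Young's inequality, absorbing fractions of $\|\omega_{xx}\|^2$ and leaving residuals in $\|\omega_x\|^2$, $\|n_x\|^2$, and $\|u-\omega\|^2$. That is precisely the paper's estimate \eqref{2.40} and the subsequent term-by-term bounds.

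The one place where your argument would not go through as written is the final Gr\"onwall step. After integrating in time and applying Lemmas \ref{thm 2.1} and \ref{thm 2.2} to the $\|n_x\|^2$ and $\|u-\omega\|^2$ residuals, you would be left with
\begin{equation*}
\|\omega_x(t)\|^2 + \int_0^t\|\omega_{xx}\|^2\,d\tau \leq C\epsilon_0^2 + C\epsilon\int_0^t\|\omega_x\|^2\,d\tau,
\end{equation*}
and Gr\"onwall's inequality gives $\|\omega_x(t)\|^2 \leq C\epsilon_0^2 e^{C\epsilon t}$, a bound that grows without limit as $t\to\infty$. Since Proposition \ref{a-priori} requires a constant uniform in $T$, this does not close. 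The fix is to observe that no Gr\"onwall is needed at all: the entropy estimate in Lemma \ref{thm 2.1} already gives $\int_0^t\|\omega_x\|^2\,d\tau \leq C\epsilon_0^2$, so the residual term on the right is directly bounded by $C\epsilon\,\epsilon_0^2$, and the conclusion follows immediately. This is the route the paper takes (it invokes ``Proposition \ref{thm 2.1} and Lemma \ref{thm 2.2}'' and never mentions Gr\"onwall), and it is also why your convective and commutator estimates need not carry the explicit $\epsilon$ prefactor --- any $O(1)$ constant in front of $\int_0^t\|\omega_x\|^2\,d\tau$ is harmless once Lemma \ref{thm 2.1} is used.
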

\begin{proof}
Dividing \eqref{2.012} by $n$ and multiplying the resulted equation by $ -\omega_{xx} $ yield
\begin{equation}
-(\omega_x\omega_t)_x + \omega_x \omega_{xt} - \omega\omega_x\omega_{xx} + (\ln n)_x\omega_{xx} = -\omega_{xx}^{2} +(\ln n)_x\omega_x\omega_{xx} +\rho (u-\omega)\omega_{xx}.
\end{equation}
Integrating over $\mathbb{R} \times (0,t)$ implies that 
\begin{equation}\label{2.40}
\begin{aligned}
& \int_{\mathbb{R}} \frac{\omega_x^{2}}{2}dx + \int_{0}^{t}\int_{\mathbb{R}} \omega_{xx}^{2}dxd\tau = \int_{\mathbb{R}} \frac{\omega_{0x}^{2}}{2}dx -\int_{0}^{t} \int_{\mathbb{R}} (\ln n)_x\omega_x\omega_{xx} dxd\tau + \\
& \int_{0}^{t}\int_{\mathbb{R}} \omega\omega_x\omega_{xx}dxd\tau 
+ \int_{0}^{t}\int_{\mathbb{R}}(\ln n)_x\omega_{xx}dxd\tau + \int_{0}^{t}\int_{\mathbb{R}}\rho (u-\omega)\omega_{xx} dxd\tau.
\end{aligned}
\end{equation}
By \eqref{Sob} and Young's inequality, we obtain 
\begin{equation*}
\begin{aligned}
& \int_{0}^{t} \int_{\mathbb{R}} (\ln n)_x\omega_x\omega_{xx} dxd\tau \leq \int_{0}^{t} \|\omega_x\|_{\infty}\|\omega_{xx}\|\|(\ln n)_x\| d\tau \\
\leq&C \int_{0}^{t} \|\omega_x\|^{\frac{1}{2}}\|\omega_{xx}\|^{\frac{3}{2}} d\tau \leq \delta \int_{0}^{t}\|\omega_{xx}\|^{2}d\tau + C_\delta\int_{0}^{t} \|\omega_x\|^{2} d\tau,
\end{aligned}
\end{equation*}
where $\delta$ is a small positive constant to be determined and $C_\delta$ is a positive constant depending on $\delta$.
Similarly,
\begin{equation*}
\begin{aligned}
& \int_{0}^{t}\int_{\mathbb{R}} \omega\omega_x\omega_{xx}dxd\tau  \leq \delta \int_{0}^{t} \|\omega_{xx}\|^{2} d\tau + C_\delta
\int_{0}^{t} \|\omega_x\|^{2} d\tau,   \\
& \int_{0}^{t}\int_{\mathbb{R}}(\ln n)_x\omega_{xx}dxd\tau \leq \delta
 \int_{0}^{t} \|\omega_{xx}\|^{2} d\tau + C_\delta\int_{0}^{t} \|n_x\|^{2} d\tau,  \\
& \int_{0}^{t}\int_{\mathbb{R}}\rho (u-\omega)\omega_{xx} dxd\tau  \leq \delta \int_{0}^{t}  \|\omega_{xx}\|^{2} d\tau + C_\delta\int_{0}^{t}  \|u - \omega\| ^{2} d\tau.
\end{aligned}
\end{equation*}
Plugging the above estimates into \eqref{2.40}, taking $\delta$  suitably small and using Proposition \ref{thm 2.1} and Lemma \ref{thm 2.2}, we can deduce Lemma \ref{thm 2.3}.
\end{proof}
\subsection{Estimate of $ v_x, u_x, v_t, u_t $}
\begin{lma}\label{thm 2.4}
Under the hypothesis and a priori assumptions in Proposition \ref{a-priori}, there exists $C>0$ such that the following estimate holds:
\begin{equation}
\|(v_x, u_x, v_t, u_t)(t,\cdot)\|^{2} + \int_{0}^{t} \|(u_x, u_{\tau})(\tau,\cdot)\|^{2} d\tau \leq C\epsilon_{0}^{2}.
\end{equation}
\end{lma}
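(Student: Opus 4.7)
The plan is to isolate the only genuine dissipation available in the hyperbolic subsystem $\eqref{2.2}_{1}$--$\eqref{2.2}_{2}$, which is hidden inside the coupled drag force $n(\omega-u)$, and then read off the remaining bounds in Lemma \ref{thm 2.4} from the equations themselves together with a cross-term manipulation. Every dissipative quantity not already supplied by Lemmas \ref{thm 2.1}--\ref{thm 2.3} must come from this single drag-type mechanism.

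For the first-order dissipation I apply $\partial_x$ to $\eqref{2.2}_1$ and $\eqref{2.2}_2$, test with $v_x$ and $u_x$ respectively, and add. The symmetric part cancels after integration by parts, $\sigma_*\int_{\mathbb{R}}(v_x u_{xx}+u_x v_{xx})\,dx=\sigma_*\int_{\mathbb{R}}(u_x v_x)_x\,dx=0$. The drag contribution, expanded as $[n(\omega-u)]_x=n_x(\omega-u)+n\omega_x-nu_x$, produces the crucial negative term $-\int_{\mathbb{R}} n u_x^2\,dx\le -\tfrac{n_*}{2}\|u_x\|^2$ (using $\|n-n_*\|_\infty\ll 1$), together with parasitic pieces $\int n_x(\omega-u)u_x\,dx$ and $\int n\omega_x u_x\,dx$. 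Each parasitic piece is split by Young's inequality into a fraction of the dissipation and remainders $C(\|n_x\|^2+\|\omega_x\|^2+\|\omega-u\|^2)$ that are already $L^1_t$ by Lemmas \ref{thm 2.1}--\ref{thm 2.3}. The quasilinear terms on the right of $\eqref{2.2}_{1}$--$\eqref{2.2}_{2}$, after one integration by parts such as $\int(uv_x)_x v_x\,dx=\tfrac12\int u_x v_x^2\,dx$, are cubic and absorbable via the Gagliardo--Nirenberg inequality \eqref{Sob} under the a priori hypothesis $\epsilon\mathcal E\le\tfrac{1}{16}$. Integrating in time yields $\|(v_x,u_x)(t)\|^2+\int_0^t\|u_x(\tau)\|^2\,d\tau\le C\epsilon_0^2$.

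The pointwise bound $\|(v_t,u_t)(t)\|^2\le C\epsilon_0^2$ then follows by substituting $v_t=-\sigma_*u_x-uv_x-\tfrac{\gamma-1}{2}vu_x$ and $u_t=-\sigma_*v_x-uu_x-\tfrac{\gamma-1}{2}vv_x+n(\omega-u)$ from \eqref{2.2} and using the $L^\infty$ smallness. For the time integral $\int_0^t\|u_\tau\|^2\,d\tau$ I still need $\int_0^t\|v_x\|^2\,d\tau$, which I obtain from the cross functional $\Lambda(t):=\int_{\mathbb{R}} u v_x\,dx$. The identity $\Lambda'(t)=\int u_t v_x\,dx-\int u_x v_t\,dx$, combined with the two equations above, gives
\begin{equation*}
\sigma_*\|v_x\|^2+\Lambda'(t)=\sigma_*\|u_x\|^2+(\text{cubic terms})+\int_{\mathbb{R}} n(\omega-u)v_x\,dx.
\end{equation*}
Absorbing the drag integral by Young's inequality into $\tfrac{\sigma_*}{2}\|v_x\|^2+C\|\omega-u\|^2$ and using $|\Lambda(t)|\le C\|u\|\,\|v_x\|\le C\epsilon_0^2$, time integration produces $\int_0^t\|v_x(\tau)\|^2\,d\tau\le C\epsilon_0^2$; feeding this back into the pointwise expression for $u_t$ yields $\int_0^t\|u_\tau(\tau)\|^2\,d\tau\le C\epsilon_0^2$.

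The main obstacle is that $\eqref{2.2}_{1}$--$\eqref{2.2}_{2}$ carries no pure linear damping on $u$: the only dissipation available is the coupled drag $n(\omega-u)$, and turning it into a true coercive $-\tfrac{n_*}{2}\|u_x\|^2$ requires controlling the parasitic pieces $n_x(\omega-u)$ and $n\omega_x$ by quantities already secured for the other phase in Lemmas \ref{thm 2.1}--\ref{thm 2.3}. The cross-functional $\Lambda$ is routine for symmetric hyperbolic systems with pure damping, but here the drag coupling forces one additional $L^2_t$-absorption against $\|\omega-u\|$ to close the estimate.
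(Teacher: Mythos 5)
Your first energy step does not close as claimed, and this is exactly the technical crux the paper's proof is built around. After differentiating $\eqref{2.2}_1$--$\eqref{2.2}_2$ in $x$, testing with $(v_x,u_x)$ and integrating by parts, the quasilinear contribution contains $\int_{\mathbb{R}} u_x v_x^{2}\,dx$. The only coercive quantity produced by the drag term is $\int_{\mathbb{R}} n\,u_x^{2}\,dx$, so this cubic term, bounded by $\|u_x\|_{\infty}\|v_x\|^{2}\lesssim \epsilon^{1/2}\mathcal{E}^{1/2}\|v_x\|^{2}$, cannot be absorbed into the dissipation: $\|v_x\|^{2}$ sits only in the time derivative on the left, and a Gr\"onwall argument based on that time derivative would yield an exponentially growing bound, not the uniform bound you assert at the end of that step. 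This is precisely why the paper inserts the equation identity \eqref{2.64}, $|v_x|\le C(|u_t|+|u_x|+|u-\omega|)$, to trade $\|v_x\|^2$ for $\|u_t\|^2+\|u_x\|^2+\|u-\omega\|^2$, and then couples the $x$-differentiated energy inequality with a $t$-differentiated one (which supplies the missing dissipation $\int n\,u_t^{2}$) before integrating; the two are absorbed together using $\epsilon\mathcal{E}\le\tfrac{1}{16}$.

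Your cross functional $\Lambda(t)=\int_{\mathbb{R}} u\,v_x\,dx$ is a legitimate alternative device that would in principle put $\int_0^t\|v_x\|^{2}\,d\tau$ into play, but as presented it is used \emph{after} you have already declared the first step closed, and the two are mutually dependent: the $\Lambda$ identity needs $\int_0^t\|u_x\|^{2}\,d\tau$ as input, while the first energy inequality needs $\int_0^t\|v_x\|^{2}\,d\tau$ (or the paper's substitution) to control $\int u_x v_x^{2}$. To make your route work you would have to add a small multiple of the $\Lambda$-identity to the $x$-differentiated energy inequality and close the two simultaneously, tracking that $|\Lambda(t)|\le\|u\|\,\|v_x\|\le C\epsilon_0\|v_x(t)\|$ is Young-absorbable into $\tfrac14\|v_x(t)\|^2$; done carefully this appears viable, but the sequential presentation as written is circular. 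Your subsequent pointwise bound for $\|(v_t,u_t)\|$ via direct substitution of $\eqref{2.2}_1$--$\eqref{2.2}_2$ is fine, and the final step, expressing $u_t$ through the equation to get $\int_0^t\|u_\tau\|^2\,d\tau$ from $\int_0^t\|v_x\|^2\,d\tau$, is also sound once the earlier gap is repaired.
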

\begin{proof}
Differentiating $\eqref{2.2}_1,\eqref{2.2}_2$ with $x$, multiplying them by $ v_x, u_x $ respectively, then adding them together and integrating over $\mathbb{R} \times (0,t)$, we can get
\begin{equation}\label{2.62}
\begin{aligned}
&\frac{d}{dt} \int_{\mathbb{R}}\frac{1}{2}(v_x^{2} + u_x^{2}) dx + \int_{\mathbb{R}} nu_x^{2} dx  \\
 = & -\frac{1}{2}\int_{\mathbb{R}} u_x^{3} dx  -\frac{\gamma}{2} \int_{\mathbb{R}} u_xv_x^{2} dx 
 + \int_{\mathbb{R}} n_x(\omega - u)u_x dx + \int_{\mathbb{R}} n\omega_xu_x dx.
\end{aligned}
\end{equation}
By using \eqref{Sob}, we obtain
\begin{equation*}
\left|\int_{\mathbb{R}} u_x^{3} dx\right| \leq C \|u_x\|_{\infty} \|u_x\|^{2} \leq C{\epsilon}^{\frac{1}{2}}{\mathcal{E}}^{\frac{1}{2}}\|u_x\|^{2}.
\end{equation*}
With the aid of $\eqref{2.2}_2$, we have
 \begin{equation}\label{2.64}
 |v_x | \leq C(|u_t| + |uu_x| + |n(\omega - u)|) \leq C(|u_t| + |u_x| + |(\omega - u)|), 
\end{equation}
then it holds that
\begin{equation*}  
 |v_x|^{2} \leq C\big(|u_t|^{2} + |u_x|^{2} + |(\omega - u)|^{2}\big), 
 \end{equation*} 
therefore, using \eqref{Sob} and the assumption in Proposition \ref{a-priori} gives
\begin{equation*}
\begin{aligned}
\int_{\mathbb{R}} u_xv_x^{2} dx \leq &\|u_x\|_{\infty} \|u_t\|^{2} + \|u_x\|_{\infty} \| \|u_x\|^{2} + \|u_x\|_{\infty}\|\omega -u\|^{2}\\
\leq&C {\epsilon}^{\frac{1}{2}}{\mathcal{E}}^{\frac{1}{2}} (\|u_t\|^{2} + \|u_x\|^{2} + \|\omega - u\|^{2}).
\end{aligned}
\end{equation*}
Similarly, by \eqref{Sob} and Young's inequality, it follows that
\begin{equation*}
\begin{aligned} 
\int_{\mathbb{R}} n_x(\omega - u)u_x dx \leq \delta \|u_x\|^{2} + C_\delta {\epsilon}\|n_x\|^{2},
\end{aligned}
\end{equation*}
and 
\begin{equation*}
\begin{aligned}
\int_{\mathbb{R}} n\omega_xu_x dx  \leq \delta \|u_x\|^{2} + C_\delta \|\omega_x\|^{2}.
\end{aligned}
\end{equation*}
Plugging the above estimates into \eqref{2.62} and choosing $\delta$ suitably small yield
\begin{equation}\label{2.069}
\begin{aligned}
&  \frac{d}{dt} \int_{\mathbb{R}}\frac{1}{2}(v_x^{2} + u_x^{2}) dx + \int_{\mathbb{R}} nu_x^{2} dx  \\
\leq &C{\epsilon}^{\frac{1}{2}}{\mathcal{E}}^{\frac{1}{2}} (\|u_x\|^{2} + \|u_t\|^{2}) + C(\|\omega_x\|^{2} + \|n_x\|^{2} + \|\omega - u\|^{2}).
\end{aligned}
\end{equation}
To estimate $ \|u_t\|^{2} $, differentiating $\eqref{2.2}_1, \eqref{2.2}_2 $ with respect to the time variable $t$, and multiply the resulted equations by $ v_t, u_t $ respectively, summing them together and integrating over $\mathbb{R} \times (0,t),$ give
\begin{equation}\label{(2.69)}
\begin{aligned}
&\frac{d}{dt} \int_{\mathbb{R}}\frac{1}{2}(v_t^{2} + u_t^{2}) dx + \int_{\mathbb{R}} nu_t^{2} dx \\
=& -\frac{1}{2}\int_{\mathbb{R}} u_xu_t^{2} dx -\frac{\gamma}{2} \int_{\mathbb{R}} u_xv_t^{2} dx - \int_{\mathbb{R}} u_tv_tv_x dx \\
& + \int_{\mathbb{R}} n_t(\omega - u)u_t dx + \int_{\mathbb{R}} n\omega_tu_t dx.
\end{aligned}
\end{equation} 
By using \eqref{Sob} and a priori assumption, we can get 
\begin{equation*}
\int_{\mathbb{R}} u_xu_t^{2} dx \leq C\|u_x\|_{\infty}\|u_t\|^{2} \leq C\|u_x\|^{\frac{1}{2}}\|u_{xx}\|^{\frac{1}{2}}\|u_t\|^{2} \leq C{\epsilon}^{\frac{1}{2}}{\mathcal{E}}^{\frac{1}{2}}\|u_t\|^{2}.
\end{equation*}
By $\eqref{2.2}_1$, we have 
\begin{equation}\label{2.70}
|v_t| \leq C(|u_x| + |uv_x|) \leq  C(|u_x| + |v_x|),
\end{equation}
thus,
\begin{equation*}
|v_t|^{2} \leq  C(|u_x|^{2} + |v_x|^{2}),
\end{equation*}
therefore,
\begin{equation*}
\int_{\mathbb{R}} u_xv_t^{2} dx  \leq C {\epsilon}^{\frac{1}{2}}{\mathcal{E}}^{\frac{1}{2}} (\|u_t\|^{2} + \|u_x\|^{2}).
\end{equation*}
Similarly, by \eqref{2.64}, \eqref{2.70} and Young's inequality, we get
\begin{equation*}
\begin{aligned}
& \int_{\mathbb{R}} u_tv_tv_x dx    \\
\leq&C{\epsilon}^{\frac{1}{2}}{\mathcal{E}}^{\frac{1}{2}}\|u_t\|^{2} + \delta\|u_t\|^{2} + C{\epsilon} {\mathcal{E}} \|u_x\|^{2} + C{\epsilon} {\mathcal{E}}\|\omega-u\|^{2}.  
\end{aligned}
\end{equation*}
Noting that $ n_t = -(n\omega)_x = -(n_x\omega + n\omega_x) $, we have
\begin{equation*}
\begin{aligned}
&  \int_{\mathbb{R}} n_t(\omega - u)u_t dx = - \int_{\mathbb{R}} (n_x\omega + n\omega_x) (\omega - u)u_t dx\\
 =& - \int_{\mathbb{R}} [(n_x\omega(\omega - u)u_t + n\omega_x(\omega - u)u_t] dx\\
\leq&\delta \|u_t\|^{2} + C{\epsilon}{\mathcal{E}}\|\omega -u\|^{2}.
\end{aligned}
\end{equation*}
Similarly, noting that
\begin{equation*}
n\omega_t = -n\omega\omega_x - n_x - n_x\omega_x + n\omega_{xx} + \rho n(u - \omega),
\end{equation*}
thus, 
\begin{equation*}
\int_{\mathbb{R}} n\omega_tu_t dx \leq \delta \|u_t\|^{2} + C\|(\omega_x,n_x,\omega_{xx},u-\omega)\|^{2}.
\end{equation*}
Plugging the above estimates into \eqref{(2.69)} and choosing $\delta $ suitably small, we can obtain
\begin{equation}\label{2.83}
\begin{aligned}
&\frac{d}{dt} \int_{\mathbb{R}} \frac{1}{2}(v_t^{2} + u_t^{2}) dx + \int_{\mathbb{R}} nu_t^{2} dx  \\
 \leq &C {\epsilon}^{\frac{1}{2}}{\mathcal{E}}^{\frac{1}{2}} \|(u_t,u_x)\|^{2} + C\|(\omega_x,n_x,\omega_{xx},u-\omega)\|^{2}.
\end{aligned}
\end{equation}
Combining\eqref{2.069} and \eqref{2.83}, then integrating with respect to $t$ over $[0,t]$ and using the assumption $ {\epsilon}{\mathcal{E}}\leq \frac{1}{16}$ yield
\begin{equation}\label{est_vxux}
\begin{aligned}
&\|(v_x, u_x, v_{t},u_{t})(t,\cdot)\|^{2} + \int_{0}^{t} \|(u_x,u_{\tau})\|^{2}d\tau \\
\leq & C\|(v_{0x}, u_{0x})\|^{2} + \int_{\mathbb{R}} (v^{2}_t+ u^{2}_t)dx\Big|_{t=0} \\
\leq&C\|(v_{0x}, u_{0x}, u_0, \omega_0)\|^{2}  +  C\int_{0}^{t}\|(\omega_x, n_x, \omega_{xx}, u-\omega)\|^{2}d\tau,
\end{aligned}
\end{equation}
where we have used the fact $\int_{\mathbb{R}} (v^{2}_t+ u^{2}_t)dx\Big|_{t=0} \leq C\|(v_{0x}, u_{0x}, u_0, \omega_0)\|^{2}$ which can be derived by using equations $\eqref{2.2}_1, \eqref{2.2}_2 $ in the last inequality. 

Combing \eqref{est_vxux} and Lemmas \ref{thm 2.1}-\ref{thm 2.3}, we complete the proof of Lemma \ref{thm 2.4}.
\end{proof}

\subsection{Estimate of $ n_{xx}, \omega_{xx} $ }
\begin{lma}\label{thm 2.5}
Under the hypothesis and a priori assumptions in Proposition \ref{a-priori}, there exists $C>0$ such that the following estimate holds:
\begin{equation*}
\|(n_{xx},\omega_{xx})(t,\cdot)\|^{2} + \int_{0}^{t} \|(n_{xx}, \omega_{xxx})(\tau,\cdot)\|^{2}d\tau \leq C(\epsilon_{0}^{2} +{\mathcal{E}^2_0}).
\end{equation*}
\end{lma}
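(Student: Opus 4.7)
The plan is to produce simultaneously a dissipation of $\omega_{xxx}$ from the viscous structure of the $\omega$-momentum equation and a hidden dissipation of $n_{xx}$ from a once-differentiated version of the auxiliary identity \eqref{2.23} used in Lemma \ref{thm 2.2}, and then to combine the two through a weighted sum in which the cross couplings $\|n_{xx}\|^2\leftrightarrow\|\omega_{xxx}\|^2$ are absorbed thanks to the a priori smallness $\epsilon\mathcal{E}\le\tfrac{1}{16}$ from Proposition \ref{a-priori}.

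For the viscous piece, I would divide \eqref{2.012} by $n$ to get
\begin{equation*}
\omega_t + \omega\omega_x + \tfrac{n_x}{n} - \tfrac{n_x\omega_x}{n} - \omega_{xx} = \rho(u-\omega),
\end{equation*}
differentiate once in $x$, and test with $-\omega_{xxx}$. The viscous term yields $\|\omega_{xxx}\|^2$ and the time-derivative term yields $\tfrac12\tfrac{d}{dt}\|\omega_{xx}\|^2$, while the convection, coupling and relaxation terms are handled by \eqref{Sob}, the a priori bound \eqref{assumption}, and Young's inequality. The delicate contribution comes from $(n_x/n)_x\omega_{xxx}$: after one extra integration by parts it trades $n_{xxx}$ for $n_{xx}$ and is bounded by $C\|n_{xx}\|^2+\delta\|\omega_{xxx}\|^2$ modulo lower-order norms. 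The net outcome is
\begin{equation*}
\tfrac{d}{dt}\|\omega_{xx}\|^2 + \|\omega_{xxx}\|^2 \le C\|n_{xx}\|^2 + C\|(\omega_x,\omega_{xx},n_x,u-\omega,u_x,v_x)\|^2 .
\end{equation*}

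For the $n_{xx}$ piece I would mimic Lemma \ref{thm 2.2} at one higher derivative. With $\psi:=\omega+(\ln n)_x$, the identity \eqref{2.23} reads $n\psi_t+n\omega\psi_x+n_x=\rho n(u-\omega)$; differentiating it once in $x$ and testing with $\psi_x$, the time derivatives assemble (via $\eqref{1.1}_3$) into $\tfrac{d}{dt}\!\int\tfrac12 n\psi_x^2\,dx$ plus controllable corrections, and the key $n_{xx}$ term becomes
\begin{equation*}
\int n_{xx}\psi_x\,dx = \int \tfrac{n_{xx}^2}{n}\,dx - \int n_x\omega_{xx}\,dx - \int \tfrac{n_x^2 n_{xx}}{n^2}\,dx,
\end{equation*}
whose first summand is the hidden coercive dissipation of order $\|n_{xx}\|^2$; the other two terms and all remaining nonlinear pieces are absorbed by $\delta(\|\omega_{xxx}\|^2+\|n_{xx}\|^2)$ plus lower-order norms already controlled in Lemmas \ref{thm 2.1}--\ref{thm 2.4}.

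Combining these two estimates after multiplying the $\omega_{xx}$ one by a sufficiently small $\eta>0$ (so that $C\eta\|n_{xx}\|^2$ on its RHS is absorbed into the coercive $n_{xx}$-dissipation), choosing $\delta$ small to absorb $\delta\|\omega_{xxx}\|^2$, and using $\epsilon\mathcal{E}\le\tfrac{1}{16}$, I would obtain a closed differential inequality for $\|\omega_{xx}\|^2+\int\tfrac12 n\psi_x^2\,dx$. Integration over $[0,t]$, the initial bound $\|\psi_x(0)\|^2\le C(\epsilon_0^2+\mathcal{E}_0^2)$ (coming from $(\ln n_0)_{xx}=n_{0xx}/n_0-n_{0x}^2/n_0^2$ together with \eqref{Sob}), and the fact that the accumulated time integrals of the lower-order norms are bounded by $C\epsilon_0^2$ via Lemmas \ref{thm 2.1}--\ref{thm 2.4} then deliver the stated bound, after converting $\|\psi_x\|^2$ back to $\|n_{xx}\|^2$ using $\psi_x=\omega_x+n_{xx}/n-n_x^2/n^2$. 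The main obstacle is precisely this cross coupling: the $\omega$-estimate's RHS unavoidably contains $\|n_{xx}\|^2$ (from $(\ln n)_{xx}$) and the $n$-estimate's RHS contains $\|\omega_{xxx}\|^2$ (through $\omega_x$ in $\psi_x$ after integration by parts), so a carefully weighted linear combination supported by $\epsilon\mathcal{E}\le\tfrac{1}{16}$ is essential to keep both dissipations alive simultaneously.
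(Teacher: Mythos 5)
Your proposal is correct and amounts to essentially the same argument as the paper, packaged slightly differently in two places. For the coercive $\|n_{xx}\|^2$ estimate, the paper adds the twice-$x$-differentiated mass equation tested with $n_{xx}/n$ (its \eqref{2.88}) to the once-differentiated, $1/n$-scaled $\omega$-momentum equation tested with $n_{xx}$ (its \eqref{2.89}), so the $n_{xx}\omega_{xxx}$ terms cancel and the dissipation $\int n_{xx}^2/n$ emerges; you obtain the same cancellation and the same dissipation by differentiating \eqref{2.23} once and testing with $\psi_x$, which is simply the same algebra routed through the effective velocity $\psi=\omega+(\ln n)_x$. The more substantive difference is how the cross term $n_{xx}\omega_{xxx}$ in the $\omega_{xxx}$-dissipation estimate is controlled: you keep it, bound it by $\delta\|\omega_{xxx}\|^2+C\|n_{xx}\|^2$, and absorb $C\eta\|n_{xx}\|^2$ into the coercive $n_{xx}$-dissipation through a small weight $\eta$ in the linear combination; the paper instead adds \eqref{2.88} a second time, now to the viscous identity \eqref{2.112}, so that $n_{xx}\omega_{xxx}$ cancels exactly there as well, and the only residual $\int\|n_{xx}\|^2$ contribution in \eqref{2.043} carries the small prefactor $\epsilon^{1/2}\mathcal{E}^{1/2}$ and is then closed against the already-established bound \eqref{2.108}. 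Both mechanisms are valid; the paper's exact cancellation avoids carrying an auxiliary weight, yours is marginally more generic but equally rigorous under $\epsilon\mathcal{E}\le\tfrac{1}{16}$. One small inaccuracy in your outline: the $\psi_x$-estimate's right-hand side does not actually generate $\|\omega_{xxx}\|^2$ (every term there involves $\omega$-derivatives of order at most two), so the coupling is one-directional---from the $\omega_{xxx}$-estimate into $\|n_{xx}\|^2$ only---which makes the weighted combination, or the simpler sequential substitution of the $n_{xx}$ bound into the $\omega_{xxx}$ bound, even easier than you anticipated.
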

\begin{proof}
Differentiating $\eqref{2.2}_3$ twice with respect to $x$ and then multiplying the resulted equation by $n_{xx}$ yield
\begin{equation}\label{2.88}
\begin{aligned}
\frac{1}{2n}(n_{xx}^{2})_t + \frac{1}{n}n_{xx}n_{xxx}\omega + \frac{3}{n}n_{xx}^{2}\omega_x + \frac{3}{n}n_xn_{xx}\omega_{xx} + n_{xx}\omega_{xxx} = 0.
\end{aligned}
\end{equation}
Dividing\eqref{2.012} by $n$, differentiating the result with respect to $x$, then multiplying it by $ n_{xx} $, one gets
\begin{equation}\label{2.89}
\begin{aligned}
& \omega_{xt}n_{xx} + \omega_x^{2}n_{xx} + \omega\omega_{xx}n_{xx} + \frac{1}{n}n_{xx}^{2} - \frac{1}{n}n_x^{2}n_{xx} = \frac{1}{n}\omega_xn_{xx}^{2} + \omega_{xxx}n_{xx} \\
& - \frac{1}{n^{2}}\omega_xn_x^{2}n_{xx} + \rho_x(u-\omega)n_{xx} + \rho (u-\omega)_xn_{xx}.
\end{aligned}
\end{equation}
Adding \eqref{2.88} with \eqref{2.89} together, and integrating by parts give
\begin{equation}\label{2.90}
\begin{aligned}
& (\frac{1}{2n}n_{xx}^{2})_t + \frac{1}{n}n_{xx}^{2} = - \frac{1}{n}\omega_xn_{xx}^{2} - \frac{3}{n}n_xn_{xx}\omega_{xx} - (\omega_xn_{xx})_t + (\omega_xn_{xt})_x  \\
&+ \omega_{xx}n_{xt} - \omega_x^{2}n_{xx} - \omega\omega_{xx}n_{xx} +\frac{1}{n}n_x^{2}n_{xx} - \frac{1}{n^{2}}\omega_xn_x^{2}n_{xx} \\
& + \rho_x(u-\omega)n_{xx}  + \rho (u-\omega)_xn_{xx}.
\end{aligned}
\end{equation}
Integrating the above equation with respect to $x$ and $t$ over $ \mathbb{R}\times [0, t]$, we get 
\begin{equation}\label{2.92}
\begin{aligned}
& \int_{\mathbb{R}}\frac{1}{2n}n_{xx}^{2} dx + \int_{0}^{t}\int_{\mathbb{R}}\frac{1}{n}n_{xx}^{2} dxd\tau = \int_{\mathbb{R}}\frac{1}{2n_0}n_{0xx}^{2} dx + \int_{0}^{t}\int_{\mathbb{R}} \bigg[-\frac{1}{n}\omega_xn_{xx}^{2}  \\
&-\frac{3}{n}n_xn_{xx}\omega_{xx} + \omega_{xx}n_{xt} - \omega_x^{2}n_{xx} - \omega\omega_{xx}n_{xx}  + \frac{1}{n}n_x^{2}n_{xx}  - \frac{1}{n^{2}}\omega_xn_x^{2}n_{xx}  \\
& + \rho_x(u-\omega)n_{xx}   + \rho (u-\omega)_xn_{xx} \bigg]dxd\tau + \int_{\mathbb{R}}\omega_xn_{xx} dx + \int_{\mathbb{R}}\omega_{0x}n_{0xx} dx.
\end{aligned}
\end{equation}
By \eqref{Sob} and Lemma \ref{thm 2.3}, we have 
\begin{equation*}
\begin{aligned}
& \int_{0}^{t}\int_{\mathbb{R}} \frac{1}{n}\omega_xn_{xx}^{2} dxd\tau  \leq C \int_{0}^{t} \|\omega_x\|_{\infty}\|n_{xx}\|^{2} d\tau   \\
\leq & C  \int_{0}^{t}\|\omega_x\|^{\frac{1}{2}}\|\omega_{xx}\|^{\frac{1}{2}}\|n_{xx}\|^{2} d\tau \leq C{\epsilon}^{\frac{1}{2}} {\mathcal{E}}^{\frac{1}{2}} \int_{0}^{t}\|n_{xx}\|^{2} d\tau.
\end{aligned}
\end{equation*}
Similarly,
\begin{equation*}
\begin{aligned}
& \int_{0}^{t}\int_{\mathbb{R}}\bigg( \frac{3}{n}n_xn_{xx}\omega_{xx} +\omega_x^{2}n_{xx} +  \omega\omega_{xx}n_{xx} + \frac{1}{n}n_x^{2}n_{xx} +  \frac{1}{n^{2}}\omega_xn_x^{2}n_{xx}   \\
& \qquad \qquad + \rho_x(u-\omega)n_{xx}  +  \rho (u-\omega)_xn_{xx}  \bigg)dxd\tau   \\
\leq & C\left(\epsilon^2 + \delta\right) \int_{0}^{t}\|n_{xx}\|^{2} d\tau + C {\epsilon}{\mathcal{E}} \int_{0}^{t}\|(\omega_x, \omega_{xx}, u-\omega)\|^{2} d\tau   \\
& + C{\epsilon}^{4} \int_{0}^{t}\|n_x\|^{2} d\tau  + C\int_{0}^{t} \|(u_x,\omega_x)\|^{2}d\tau,  \\
\end{aligned}
\end{equation*}
the last two terms in \eqref{2.92}  can be estimated by
\begin{equation*}
\begin{aligned}
& \int_{\mathbb{R}}\omega_xn_{xx} dx  \leq \delta \|n_{xx}\|^{2} + C\|\omega_x\|^{2}, \\
& \int_{\mathbb{R}}\omega_{0x}n_{0xx} dx \leq C(\|\omega_{0x}\|^{2} + \|n_{0xx}\|^{2}).
\end{aligned}
\end{equation*}
Finally we estimate the term  $\di\int_{0}^{t}\int_{\mathbb{R}} \omega_{xx}n_{x\tau} dxd\tau$. By virtue of $ \eqref{2.2}_3 $, we have
$$ n_{xt} = -(n\omega)_{xx} = - (n_{xx}\omega + 2n_x\omega_x + n\omega_{xx}).$$ 
It is easy to see that
\begin{equation*}
\begin{aligned}
\int_{0}^{t}\int_{\mathbb{R}} \omega_{xx}n_{x\tau} dxd\tau  \leq \delta \int_{0}^{t}\|n_{xx}\|^{2} d\tau + C{\epsilon}\int_{0}^{t} \|\omega_x\|^{2} d\tau + C\int_{0}^{t} \|\omega_{xx}\|^{2} d\tau.
\end{aligned}
\end{equation*}
Plugging the above estimates into \eqref{2.92}, by the smallness of $\delta, \epsilon$ and the assumption $\epsilon \mathcal{E} \leq \frac{1}{16}$, we have 
\begin{equation}\label{2.108}
\begin{aligned}
& \|n_{xx}\|^{2} + \int_{0}^{t}\|n_{xx}\|^{2} d\tau \leq  C\|(\omega_{0x},n_{0xx})\|^{2}+  C\int_{0}^{t} \|(u_x,\omega_x,\omega_{xx})\|^{2} d\tau   \\
&  + C{\epsilon}^{4}\int_{0}^{t}\|n_x\|^{2} d\tau +  C{\epsilon}{\mathcal{E}}\int_{0}^{t}\|u-\omega\|^{2} d\tau + C\|\omega_x\|^{2}. 
\end{aligned}
\end{equation}
Dividing \eqref{2.012} by $n$, then differentiating the resulted equation with respect to $x$, and multiplying it by $ -n\omega_{xxx} $, we get
\begin{equation}\label{2.112}
\begin{aligned}
& \frac{1}{2}n(\omega_{xx})_t^{2} + n\omega_{xxx}^{2} =
n(\omega_{xt}\omega_{xx})_x + n\omega_x^{2}\omega_{xxx} + n\omega\omega_{xx}\omega_{xxx} \\
& + n_{xx}\omega_{xxx}  - \frac{1}{n}n_x^{2}\omega_{xxx}
-\omega_xn_{xx}\omega_{xxx}  + \frac{1}{n}\omega_xn_x^{2}\omega_{xxx} \\
& -n\rho_x(u-\omega)\omega_{xxx}  - n\rho (u-\omega)_x\omega_{xxx}.
\end{aligned}
\end{equation}
Combining \eqref{2.88} and \eqref{2.112}, a direct computation implies
\begin{equation}\label{2.047}
\begin{aligned}
& (\frac{1}{2n}n_{xx}^{2})_t - \frac{1}{2}n_{xx}^{2}(\frac{1}{n})_t + (\frac{1}{2}n\omega_{xx}^{2})_t - \frac{1}{2}n_t\omega_{xx}^{2} + n\omega_{xxx}^{2} \\
=  &- \frac{1}{n}n_{xx}n_{xxx}\omega  - \frac{3}{n}n_{xx}^{2}\omega_x  - \frac{3}{n}n_xn_{xx}\omega_{xx} + 
n(\omega_{xt}\omega_{xx})_x + n\omega_x^{2}\omega_{xxx}  \\
& + n\omega\omega_{xx}\omega_{xxx} - \frac{1}{n}n_x^{2}\omega_{xxx}
-\omega_xn_{xx}\omega_{xxx}  + \frac{1}{n}\omega_xn_x^{2}\omega_{xxx}-n\rho_x(u-\omega)\omega_{xxx} \\
& - n\rho (u-\omega)_x\omega_{xxx}.
\end{aligned}
\end{equation}
Integrating \eqref{2.047} with respect to $x$ and $t$ over $\mathbb{R}\times [0,t]$ yields 
\begin{equation}\label{2.115}
\begin{aligned}
& \int_{\mathbb{R}} (\frac{1}{2n}n_{xx}^{2} + \frac{1}{2}n\omega_{xx}^{2}) dx + \int_{0}^{t}\int_{\mathbb{R}} n\omega_{xxx}^{2} dxd\tau \\
=& \int_{\mathbb{R}} (\frac{1}{2n_0}n_{0xx}^{2} + \frac{1}{2}n_0\omega_{0xx}^{2}) dx   +\int_{0}^{t}\int_{\mathbb{R}}\left[ -\frac{2}{n}\omega_xn_{xx}^{2}  -  \frac{1}{2}n_x\omega\omega_{xx}^{2} 
- \frac{1}{2}n\omega_x\omega_{xx}^{2} \right.\\
& - \frac{3}{n}n_xn_{xx}\omega_{xx} +  n\omega_x^{2}\omega_{xxx} 
 +  n\omega\omega_{xx}\omega_{xxx} -  \frac{1}{n}n_x^{2}\omega_{xxx} -  \omega_xn_{xx}\omega_{xxx} \\
 & \left.+  \frac{1}{n}\omega_xn_x^{2}\omega_{xxx} -  n\rho_x(u-\omega)\omega_{xxx}   - n\rho (u-\omega)_x\omega_{xxx}\right] dxd\tau.
\end{aligned}
\end{equation}
Using Lemmas \ref{1.1} and \ref{thm 2.3}, we have 
\begin{equation*}
\begin{aligned}
&\int_{0}^{t}\int_{\mathbb{R}} \frac{2}{n}\omega_xn_{xx}^{2} dxd\tau \leq C\int_{0}^{t}\|\omega_x\|_{\infty}\|n_{xx}\|^{2} d\tau \\
\leq &C\int_{0}^{t} \|\omega_x\|^{\frac{1}{2}}\|\omega_{xx}\|^{\frac{1}{2}}\|n_{xx}\|^{2} d\tau \leq C{\epsilon}^{\frac{1}{2}}{\mathcal{E}}^{\frac{1}{2}}\int_{0}^{t}\|n_{xx}\|^{2} d\tau.
\end{aligned}
\end{equation*}
Similarly, 
\begin{equation*}
\begin{aligned}
& \int_{0}^{t}\int_{\mathbb{R}}\bigg( \frac{1}{2}n_x\omega\omega_{xx}^{2} + \frac{1}{2}n\omega_x\omega_{xx}^{2} +  n\omega_{x}^{2}\omega_{xxx} + \frac{3}{n}n_xn_{xx}\omega_{xx} + n\omega\omega_{xx}\omega_{xxx}   +\frac{1}{n}n_x^{2}\omega_{xxx}    \\
& +  \omega_xn_{xx}\omega_{xxx} +  \frac{1}{n}\omega_xn_x^{2}\omega_{xxx} + n\rho_x(u-\omega)\omega_{xxx} +  n\rho (u-\omega)_x\omega_{xxx}  \bigg)dxd\tau   \\
\leq &  \delta \int_{0}^{t} \|\omega_{xxx}\|^{2} d\tau + \delta \int_{0}^{t}\|n_{xx}\|^{2} d\tau + C{\epsilon}^{2}\int_{0}^{t}\|(\omega_x, n_x, n_{xx})\|^{2} d\tau,  \\
&  + C {\epsilon}{\mathcal{E}}\int_{0}^{t}\|(n_{xx}, u-\omega)\|^{2} d\tau  + C\int_{0}^{t} \|(u_x,\omega_x,\omega_{xx})\|^{2} d\tau.
\end{aligned}
\end{equation*}
Plugging the above estimates into \eqref{2.115} and choosing $\delta$ suitably small, we have 
\begin{equation}\label{2.043}
\begin{aligned}
& \|(n_{xx},\omega_{xx})\|^{2} + \int_{0}^{t} \|\omega_{xxx}\|^{2} d\tau   \\
\leq&C\|(n_{0xx},\omega_{0xx})\|^{2}   
+ C\int_{0}^{t}\|(u_x,\omega_x, \omega_{xx})\|^{2} d\tau + C{\epsilon}^{2} \int_{0}^{t}\|n_x\|^{2} d\tau \\
& + C{\epsilon}^{\frac{1}{2}}{\mathcal{E}}^{\frac{1}{2}} \int_{0}^{t} \|n_{xx}\|^{2} d\tau + C{\epsilon}{\mathcal{E}} \int_{0}^{t} \|u-\omega\|^{2} d\tau.
\end{aligned}
\end{equation}
By \eqref{2.108}, \eqref{2.043}, Lemmas \ref{thm 2.1}-\ref{thm 2.4}, we can complete the proof of Lemma \ref{thm 2.5}.
\end{proof}

\subsection{Estimate of $ v_{xx}, u_{xx}, v_{xt}, u_{xt} $ }
\begin{lma}\label{thm 2.6}
Under the hypothesis and a priori assumptions in Proposition \ref{a-priori}, there exists $C>0$ such that the following estimate holds:
\begin{equation}
\begin{aligned}
& \|(v_{xx}, u_{xx},v_{xt},u_{xt})(t,\cdot)\|^{2} + \int_{0}^{t} \|(u_{xx},u_{x\tau})(\tau,\cdot)\|^{2} d\tau  \\
\leq&C(\epsilon_{0}^{2}+{\mathcal{E}^2_0}).
\end{aligned}
\end{equation}
\end{lma}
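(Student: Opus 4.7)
The plan is to mirror the argument of Lemma \ref{thm 2.4} one derivative higher, by running two energy identities on the symmetric Euler subsystem $\eqref{2.2}_{1}$--$\eqref{2.2}_{2}$: one obtained from $\partial_{x}^{2}$, the other from $\partial_{x}\partial_{t}$. First I would differentiate these two equations twice in $x$, test respectively against $v_{xx}$ and $u_{xx}$, add and integrate by parts: the cross terms $\sigma_{*}u_{xxx}v_{xx}+\sigma_{*}v_{xxx}u_{xx}$ cancel, while the drag contribution gives
\[
-\int_{\mathbb{R}}(n(\omega-u))_{xx}\,u_{xx}\,dx = \int_{\mathbb{R}} n\,u_{xx}^{2}\,dx - \int_{\mathbb{R}} n\omega_{xx}\,u_{xx}\,dx + \text{(l.o.t.)},
\]
so that the coercive term $\int n u_{xx}^{2}\,dx$ appears on the left, exactly as in \eqref{2.62}.

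Next I would apply $\partial_{x}\partial_{t}$ to the same pair and test against $(v_{xt},u_{xt})$, producing likewise $\int n u_{xt}^{2}\,dx$ on the left. Whenever $n_{t}$ or $\omega_{t}$ appears I would substitute from $\eqref{1.1}_{3}$ and \eqref{2.012}, so that the highest derivative of $\omega$ on the right is at order three, controllable by the $\int_{0}^{t}\|\omega_{xxx}\|^{2}\,d\tau$ bound already secured in Lemma \ref{thm 2.5}. To trade $v_{xx}$ for $u_{xt}$ and $v_{xt}$ for $u_{xx}$ on the right, I would differentiate $\eqref{2.2}_{2}$ and $\eqref{2.2}_{1}$ once in $x$ in the spirit of \eqref{2.64} and \eqref{2.70}, obtaining pointwise bounds of the form $|v_{xx}|^{2}\le C(|u_{xt}|^{2}+\text{quadratic in lower-order derivatives})$ and similarly for $|v_{xt}|^{2}$.

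The remaining integrals are all products in second-order derivatives, such as $\int u_{xx}^{3}\,dx$, $\int u_{xx}v_{xx}^{2}\,dx$, $\int n_{xx}(\omega-u)u_{xt}\,dx$, and $\int n\omega_{xx}\,u_{xx}\,dx$. Each is estimated by Gagliardo--Nirenberg \eqref{Sob} combined with the a priori bounds $\|\cdot\|_{H^{1}}\le\epsilon$ and $\|\partial_{x}^{2}\cdot\|\le\mathcal{E}$, producing prefactors of type $\epsilon^{1/2}\mathcal{E}^{1/2}$ or $\epsilon\mathcal{E}$ in front of the highest-order norms and Young remainders already integrable by Lemmas \ref{thm 2.1}--\ref{thm 2.5}. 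Adding the two identities, absorbing the highest-order factors via $\epsilon\mathcal{E}\le\frac{1}{16}$, integrating in time, and bounding the initial-time data $\|(v_{xt},u_{xt})(0,\cdot)\|$ by $C\|(v_{0},u_{0},n_{0}-n_{*},\omega_{0})\|_{H^{2}}$ through the equations evaluated at $t=0$ would yield the announced estimate.

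The step I expect to be most delicate is handling the mixed drag-coupling terms produced by $\partial_{x}\partial_{t}(n(\omega-u))$. Expanding $n\omega_{t}$ through \eqref{2.012} generates contributions like $n\omega_{xx}u_{xt}$ and, after a further integration by parts using $n_{xt}=-(n\omega)_{xx}$, a term proportional to $\int n\omega_{xx}u_{xxt}\,dx$. These cannot be absorbed by the $\int n u_{xt}^{2}\,dx$ dissipation alone; they must be controlled by Young's inequality against the $\|\omega_{xxx}\|$ bound from Lemma \ref{thm 2.5}, so that the $\epsilon_0^2+\mathcal{E}_0^2$ on the right of the claim precisely matches the $\mathcal{E}_0^2$ contribution coming from $\int_0^t\|\omega_{xxx}\|^2\,d\tau$. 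Once this book-keeping is carried out, the rest of the argument is entirely parallel to the proofs of Lemmas \ref{thm 2.4} and \ref{thm 2.5}.
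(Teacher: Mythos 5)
Your proposal follows essentially the same route as the paper: two energy identities on the symmetric Euler subsystem (one from $\partial_x^2$, one from $\partial_x\partial_t$), the pointwise trade of $v_{xx}$ and $v_{xt}$ for $u_{xt}$ and $u_{xx}$ via \eqref{2.046} and the analogue of \eqref{2.70}, Gagliardo--Nirenberg with the $\epsilon\mathcal{E}\le\tfrac{1}{16}$ absorption, and closure by Lemmas \ref{thm 2.1}--\ref{thm 2.5}. One small imprecision: the integration by parts you describe for the drag term would transiently produce $\int n\omega_{xx}u_{xxt}\,dx$, which is not controlled; the paper instead substitutes the pointwise expression for $\omega_{xt}$ (containing $\omega_{xxx}$) into $\int n\omega_{x\tau}u_{x\tau}\,dx$ and applies Young directly, giving $\delta\|u_{x\tau}\|^2+C_\delta\|\omega_{xxx}\|^2$ with $\int_0^t\|\omega_{xxx}\|^2\,d\tau$ controlled by Lemma \ref{thm 2.5}, so $u_{xxt}$ never appears.
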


\begin{proof}
Differentiating $\eqref{2.2}_1, \eqref{2.2}_2$ twice with respect to $x$, multiplying two resulted equations by $ v_{xx}, u_{xx} $ respectively, and then summing them together and integrating over $\mathbb{R}\times[0,t]$, we have
\begin{equation}\label{2.127}
\begin{aligned}
& \frac{1}{2}\int_{\mathbb{R}} (v_{xx}^{2} + u_{xx}^{2}) dx + \int_{0}^{t}\int_{\mathbb{R}} nu_{xx}^{2} dx \\
 =&  \frac{1}{2}\int_{\mathbb{R}} (u_{0xx}^{2} + v_{0xx}^{2}) dx +  \int_{0}^{t}\int_{\mathbb{R}} \big[-\frac{5}{2} u_xu_{xx}^{2} 
- \frac{1}{2}(\gamma + 2)u_xv_{xx}^{2}   \\
& - (2\gamma - 1) v_xu_{xx}v_{xx} + n_{xx}(\omega - u)u_{xx} + 2n_{x}(\omega - u)_xu_{xx} + n\omega_{xx}u_{xx}\big] dxd\tau.
\end{aligned}
\end{equation}
Using \eqref{Sob} and Lemma \ref{thm 2.4} leads to 
\begin{equation*}
\begin{aligned}
& \int_{0}^{t}\int_{\mathbb{R}} u_xu_{xx}^{2} dxd\tau   \\
\leq &C\int_{0}^{t} \|u_x\|_{\infty}\|u_{xx}\|^{2} d\tau \leq C\int_{0}^{t} \|u_x\|^{\frac{1}{2}}\|u_{xx}\|^{\frac{1}{2}}\|u_{xx}\|^{2} d\tau \\
\leq&C{\epsilon}^{\frac{1}{2}}{\mathcal{E}}^{\frac{1}{2}}\int_{0}^{t}\|u_{xx}\|^{2} d\tau
\end{aligned}
\end{equation*}
Noting from $\eqref{2.2}_2$ that
\begin{equation}\label{2.046}
 |v_{xx}| \leq C(|u_{xt}| + |u_{xx}| + |n_x(\omega - u)| + |n(\omega - u)_x|),
\end{equation}
thus,
\begin{equation*}
\begin{aligned}
& \int_{0}^{t}\int_{\mathbb{R}} u_xv_{xx}^{2} dxd\tau   \\
\leq &{\epsilon}^{\frac{1}{2}}{\mathcal{E}}^{\frac{1}{2}}\int_{0}^{t} \|(u_{x\tau},u_{xx})\|^{2} d\tau 
+  C{\epsilon}^{\frac{3}{2}}{\mathcal{E}}^{\frac{3}{2}}\int_{0}^{t} \|\omega - u\|^{2} d\tau + C{\epsilon}{\mathcal{E}}\int_{0}^{t} \|(\omega_x,u_x)\|^{2} d\tau.  
\end{aligned}
\end{equation*}
Similarly, using \eqref{2.046} and Young's inequality yields
\begin{equation*}\label{2.140}
\begin{aligned}
& \int_{0}^{t}\int_{\mathbb{R}} v_xu_{xx}v_{xx} dxd\tau    \\
\leq&\delta \int_{0}^{t}\|u_{xx}\|^{2} d\tau + C{\epsilon}{\mathcal{E}} \int_{0}^{t} \|u_{x\tau}\|^{2} d\tau + C{\epsilon}^{\frac{1}{2}}{\mathcal{E}}^{\frac{1}{2}} \int_{0}^{t} |\|u_{xx}\|^{2} d\tau   \\
& + C{\epsilon}^{2}{\mathcal{E}}^{2} \int_{0}^{t} \|\omega - u\|^{2} d\tau + C{\epsilon}{\mathcal{E}} \int_{0}^{t} \|(\omega_x,u_x)\|^{2} d\tau. 
\end{aligned}
\end{equation*}
Applying \eqref{Sob} and Young's inequality, we have
\begin{equation*}
\begin{aligned}
& \int_{0}^{t}\int_{\mathbb{R}} \left(n_{xx}(\omega - u)u_{xx} +  n_{x}(\omega - u)_xu_{xx} +n\omega_{xx}u_{xx}  \right)dxd\tau   \\
 \leq & \delta \int_{0}^{t}\|u_{xx}\|^{2} d\tau + C{\epsilon}^{2} \int_{0}^{t} \|n_{xx}\|^{2} d\tau  + C{\epsilon}{\mathcal{E}} \int_{0}^{t} \|(\omega_x,u_x)\|^{2} d\tau + C\int_{0}^{t} \|\omega_{xx}\|^{2} d\tau.
\end{aligned}
\end{equation*}
Plugging the above estimates into \eqref{2.127} and taking $\delta$ suitably small yield 
\begin{equation}\label{2.144}
\begin{aligned}
 & \|(v_{xx}, u_{xx})\|^{2} + \int_{0}^{t}\|u_{xx}\|^{2} d\tau   \\
\leq&\|(v_{0xx},u_{0xx})\|^{2}) + C{\epsilon}{\mathcal{E}}\int_{0}^{t} \|(\omega_x,u_x)\|^{2} d\tau  + C{\epsilon}^{\frac{1}{2}}{\mathcal{E}}^{\frac{1}{2}}\int_{0}^{t}\|(u_{xx},u_{x\tau})\|^{2} d\tau   \\
& + C{\epsilon}^{2} \int_{0}^{t} \|n_{xx}\|^{2} d\tau   + C\int_{0}^{t} \|\omega_{xx}\|^{2} d\tau
+  C{\epsilon}^{\frac{3}{2}}{\mathcal{E}}^{\frac{3}{2}}\int_{0}^{t} \|\omega - u\|^{2} d\tau.
\end{aligned}
\end{equation}
On the other hand, to estimate $\int_{0}^{t} \|u_{x\tau}\|^{2} d\tau $, applying $\partial_{xt}$ to $\eqref{2.2}_1$ and $\eqref{2.2}_2$ respectively, and then multiplying the resulted equations by $ v_{xt}, u_{xt} $ respectively, summing them together and then integrating over $\mathbb{R}\times [0,t]$, we can get
\begin{equation}\label{234}
\begin{aligned}
& \frac{1}{2}\int_{\mathbb{R}} (v_{xt}^{2} + u_{xt}^{2}) dx + \int_{0}^{t}\int_{\mathbb{R}} nu_{x\tau}^{2} dxd\tau \\
=&  \frac{1}{2}\int_{\mathbb{R}} (v_{xt}^{2} + u_{xt}^{2}) dx\Big|_{t=0}  +\int_{0}^{t}\int_{\mathbb{R}}\bigg[ -\frac{3}{2}u_xu_{x\tau}^{2} - \frac{\gamma}{2} u_xv_{x\tau}^{2} - \gamma v_xu_{x\tau}v_{x\tau}
\\
&- u_\tau v_{xx}v_{x\tau} -  u_{\tau}u_{xx}u_{x\tau} - \frac{\gamma - 1}{2} v_\tau u_{xx}v_{x\tau}- \frac{\gamma - 1}{2} v_{\tau}v_{xx}v_{x\tau} +  n_{x\tau}(\omega - u )u_{x\tau} \\
&+ n_x(\omega - u)_\tau u_{x\tau}+ n_\tau(\omega-u)_xu_{x\tau} + n\omega_{x\tau}u_{x\tau}\bigg]dxd\tau.
\end{aligned}
\end{equation}

By \eqref{Sob} and Lemma \ref{thm 2.4}, one can get
\begin{equation*}
\begin{aligned}
& \int_{0}^{t}\int_{\mathbb{R}} u_xu_{x\tau}^{2} dxd\tau  \\
\leq &C\int_{0}^{t} \|u_x\|_{\infty}\|u_{x\tau}\|^{2} d\tau  
\leq C\int_{0}^{t} \|u_x\|^{\frac{1}{2}}\|u_{xx}\|^{\frac{1}{2}}\|u_{x\tau}\|^{2} d\tau  \leq  C{\epsilon}^{\frac{1}{2}}{\mathcal{E}}^{\frac{1}{2}}\int_{0}^{t} \|u_{x\tau}\|^{2} d\tau.
\end{aligned}
\end{equation*}
To estimate $ \int_{0}^{t}\int_{\mathbb{R}} u_xv_{x\tau}^{2} dxd\tau $, noting from $\eqref{2.2}_1$ that
 $$ |v_{xt}| \leq  C(|u_{xx}| + |v_{xx}|+|u_xv_x|),$$
hence,
\begin{equation*}
\begin{aligned}
& \int_{0}^{t}\int_{\mathbb{R}} u_xv_{x\tau}^{2} dxd\tau \leq C{\epsilon}^{\frac{1}{2}}{\mathcal{E}}^{\frac{1}{2}}\int_{0}^{t}\|( u_x,u_{xx},v_{xx})\|^{2} d\tau.
\end{aligned}
\end{equation*}
Then we estimate $\int_{0}^{t} \|v_{xx}\|^{2} d\tau$. Using \eqref{2.046} yields  
\begin{equation*}
\begin{aligned}
\int_{0}^{t} \|v_{xx}\|^{2} d\tau \leq C\int_{0}^{t} \|(\omega_x, u_x,u_{x\tau},u_{xx})\|^{2} d\tau + C{\epsilon}^{\frac{1}{2}}{\mathcal{E}}^{\frac{1}{2}}\int_{0}^{t}\|(\omega - u)\|^{2} d\tau,
\end{aligned}
\end{equation*}
thus, we have 
\begin{equation*}
\begin{aligned}
& \int_{0}^{t}\int_{\mathbb{R}} u_xv_{x\tau}^{2} dxd\tau   \\
\leq&C{\epsilon}^{\frac{1}{2}}{\mathcal{E}}^{\frac{1}{2}}\int_{0}^{t}\|(u_{xx}, u_{x\tau}, \omega_x,u_x)\|^{2} d\tau + C{\epsilon}{\mathcal{E}}\int_{0}^{t} \|\omega - u\|^{2} d\tau   \\
\end{aligned}
\end{equation*}
Similarly, we have
\begin{equation*}
\begin{aligned} 
& \int_{0}^{t}\int_{\mathbb{R}} \bigg(v_xu_{x\tau}v_{x\tau}  + u_\tau v_{xx}v_{x\tau} +  u_\tau u_{xx}u_{x\tau} + v_\tau u_{xx}v_{x\tau}  +v_\tau v_{xx}v_{x\tau}  \\
& +  n_{x\tau}(\omega - u )u_{x\tau} \bigg)dxd\tau   \\
\leq &C\delta \left(\int_{0}^{t}\|u_{x\tau}\|^{2} d\tau + \int_{0}^{t}\|u_{xx}\|^{2} d\tau\right) + {\epsilon}^{\frac{1}{2}}{\mathcal{E}}^{\frac{1}{2}}\int_{0}^{t}  \|(\omega - u,\omega_x,u_x)\|^{2} d\tau    \\
& + C {\epsilon}^{\frac{1}{2}}{\mathcal{E}}^{\frac{1}{2}} \int_{0}^{t} \|(u_{x\tau},u_{xx})\|^{2} d\tau  +C{\epsilon}^{4}\int_{0}^{t} \|n_{xx}\|^{2} d\tau  +  C{\epsilon}^{2} \int_{0}^{t} \|\omega_{xx}\|^{2} d\tau.
\end{aligned}
\end{equation*} 
Since,
\begin{equation*}
\begin{aligned} 
\int_{0}^{t}\int_{\mathbb{R}} n_x(\omega - u)_\tau u_{x\tau} dxd\tau = \int_{0}^{t}\int_{\mathbb{R}} n_x\omega_\tau u_{x\tau} dxd\tau - \int_{0}^{t}\int_{\mathbb{R}} n_xu_\tau u_{x\tau} dxd\tau,
\end{aligned}
\end{equation*}
to estimate $ \int_{0}^{t}\int_{\mathbb{R}} n_x\omega_\tau u_{x\tau} dxd\tau $, noting that 
$$\omega_t = -\omega\omega_x - \frac{1}{n}n_x + \frac{1}{n}n_x\omega_x + \omega_{xx} + \rho (u - \omega),$$
we can  conclude that 
\begin{equation*}
\begin{aligned} 
& \int_{0}^{t}\int_{\mathbb{R}} n_x(\omega - u)_\tau u_{x\tau} dxd\tau\\
\leq&\delta\int_{0}^{t} \|u_{x\tau}\|^{2} d\tau + C{\epsilon}^{\frac{1}{2}}{\mathcal{E}}^{\frac{1}{2}} \int_{0}^{t} \|(n_x, \omega_x, \omega_{xx},u - \omega)\|^{2} d\tau + C{\epsilon}^{2}\int_{0}^{t} \|n_{xx}\|^{2} d\tau.
\end{aligned}
\end{equation*}
Similarly, we have
\begin{equation*}
\begin{aligned} 
& \int_{0}^{t}\int_{\mathbb{R}} n_\tau (\omega-u)_xu_{x\tau} dxd\tau \\ 
\leq&\delta\int_{0}^{t} \|u_{x\tau}\|^{2} d\tau + C {\epsilon}{\mathcal{E}}\int_{0}^{t} \|(\omega_x,u_x)\|^{2} d\tau.
\end{aligned}
\end{equation*}
To estimate $  \int_{0}^{t}\int_{\mathbb{R}} n\omega_{x\tau}u_{x\tau} dxd\tau $,
noting that 
$$
\begin{array}{ll}
\di \omega_{xt} = -\omega_{x}^{2} -\omega\omega_{xx} - \frac{1}{n}n_{xx} - 
\frac{1}{n^{2}}n_x^{2} + \frac{1}{n}n_{xx}\omega_x + \omega_{xxx} -\frac{1}{n^{2}}n_x^{2}\omega_x\\
\di\qquad \ \   + \frac{1}{n}n_x\omega_{xx} + \rho_x(u -\omega)n_{xx} + \rho(u-\omega)_xn_{xx}, 
\end{array}$$
we can obtain
\begin{equation*}
\begin{aligned} 
& \int_{0}^{t}\int_{\mathbb{R}} n\omega_{x\tau}u_{x\tau} dxd\tau \\
\leq&\delta \int_{0}^{t} \|u_{x\tau}\|^{2} d\tau + C{\epsilon}{\mathcal{E}} \int_{0}^{t} \|(\omega_x,n_x)\|^{2}d\tau + C\left(\int_{0}^{t} \|\omega_{xx}\|^{2} d\tau + \int_{0}^{t} \|n_{xx}\|^{2} d\tau\right). 
\end{aligned}
\end{equation*}
Plugging these estimates into \eqref{234}, then combining with \eqref{2.144} together and by choosing $ \delta $ suitably small and the fact $\epsilon \mathcal{E} \leq \frac{1}{16}$ yield
\begin{equation}\label{2.187}
\begin{aligned} 
&  \|(v_{xx},u_{xx},v_{xt},u_{xt})\|^{2}  + \int_{0}^{t}\|(u_{xx},u_{x\tau})\|^{2} d\tau  \\
\leq&\|(v_{0xx},u_{0xx})\|^{2}  + \int_{\mathbb{R}} (v_{xt}^{2} + u_{xt}^{2}) dx\Big|_{t=0} +  C {\epsilon}^{\frac{1}{2}}{\mathcal{E}}^{\frac{1}{2}}  \int_{0}^{t} \|(n_x,\omega_x,u_x)\|^{2} d\tau   \\
& + C\int_{0}^{t} \|(n_{xx},\omega_{xx})\|^{2} d\tau  + C{\epsilon}{\mathcal{E}} \int_{0}^{t}  \|\omega - u\|^{2} d\tau.
\end{aligned}
\end{equation}
By virtue of \eqref{2.187}, Lemmas \ref{thm 2.1}-\ref{thm 2.5}, we obtain
\begin{equation}
\begin{aligned} 
&  \|(v_{xx},u_{xx},v_{xt},u_{xt})(t,\cdot)\|^{2}  + \int_{0}^{t}\|(u_{xx},u_{x\tau})(\tau,\cdot)\|^{2}d\tau \leq C(\epsilon_0^{2}+{\mathcal{E}^2_0}),
\end{aligned}
\end{equation}
where we have used the fact $$\int_{\mathbb{R}} (v_{xt}^{2} + u_{xt}^{2}) dx\Big|_{t=0} \leq C\|(v_{0x}, u_{0x}, v_{0xx},u_{0xx})\|^{2} ,$$
 which can be derived by differentiating $\eqref{2.2}_1, \eqref{2.2}_2$ with $x$.

\end{proof}

Based on the local existence in Proposition \ref{thm 1.1} and a priori estimate in 
Proposition \ref{a-priori}, we can prove Theorem \ref{thm 1.2} as follows.

\begin{proof}
Choosing initial data satisfies $\mathcal{E}_0 < \frac{1}{2} \mathcal{E}$ and  then choosing  $\epsilon_0 < \frac{1}{2}\epsilon$, 
by Proposition \ref{thm 1.1}, the local solution of \eqref{1.1}-\eqref{1.2} (or \eqref{2.2}-\eqref{2.3}) exists in $C\left([0,T_0];H^{2}(\mathbb{R})\right)$ and has the estimate
\begin{equation}\label{2.85}
\begin{aligned} 
& \sup\limits_{t\in [0,T_0]}\|(\rho- \rho_{*}, u, n - n_{*}, \omega)(t,\cdot)\|_{H^{1}} \leq 2\epsilon_0 \leq {\epsilon},   \\
& \sup\limits_{t\in [0,T_0]}\|(\rho_{xx}, u_{xx}, \rho_{xt},u_{xt}, n_{xx}, \omega_{xx})(t,\cdot)\| \leq 2{\mathcal{E}_0} \leq {\mathcal{E}},
\end{aligned}
\end{equation}
therefore by Proposition \ref{a-priori}, the solution satisfies a priori estimates 
$$\sup\limits_{t\in [0,T_0]}\|(\rho- \rho_{*}, u, n - n_{*}, \omega)(t,\cdot)\|_{H^{1}} \leq \sqrt{C_0}\epsilon_0 \leq \frac{1}{2}{\epsilon} $$  and 
$$\sup\limits_{t\in [0,T_0]}\|(\rho_{xx}, u_{xx}, \rho_{xt}, u_{xt}, n_{xx}, \omega_{xx})(t,\cdot)\| \leq \sqrt{C_0}(\epsilon_0 + {\mathcal{E}_0})\leq \frac{1}{2}{\mathcal{E}}, $$ 
provided $\epsilon_0 < \frac{1}{2C_0}{\epsilon}$, $ {\mathcal{E}_0} < \frac{1}{4\sqrt{C_0}}{\mathcal{E}} $ and $\epsilon_0 < \mathcal{E}_0$. 
Thus by Proposition \ref{thm 1.1} the initial value problem \eqref{1.1}(or \eqref{2.2}) for $t \geq T_0$ with the initial data $(\rho, u, n, \omega)(T_0)$ has again a unique solution $(\rho, u, n, \omega) \in C([T_0,2T_0];H^{2})$ satisfying the estimates
\begin{equation}\label{2.86}
\begin{aligned} 
& \sup\limits_{t\in [T_0,2T_0]}\|(\rho - \rho_{*}, u, n- n_{*}, \omega)(t,\cdot)\|_{H^{1}} \leq {\epsilon},   \\
& \sup\limits_{t\in [T_0,2T_0]}\|(\rho_{xx}, u_{xx}, \rho_{xt},u_{xt}, n_{xx}, \omega_{xx})(t,\cdot)\| \leq {\mathcal{E}}.
\end{aligned}
\end{equation}
Then by \eqref{2.85}, \eqref{2.86} and Proposition \ref{a-priori}, we have 
$$\sup\limits_{t\in [0,2T_0]}\|(\rho - \rho_{*}, u, n - n_{*}, \omega)(t,\cdot)\|_{H^{1}} \leq \sqrt{C_0}\epsilon_0 \leq \frac{1}{2}{\epsilon} $$  and 
$$\sup\limits_{t\in [0,2T_0]}\|(\rho_{xx}, u_{xx}, \rho_{xt},u_{xt}, n_{xx}, \omega_{xx})(t,\cdot)\|  \leq \sqrt{C_0}(\epsilon_0 + {\mathcal{E}_0}) \leq \frac{1}{2}{\mathcal{E}} ,$$ 
provided $\epsilon_0 < \frac{1}{2C_0}{\epsilon}$, $ {\mathcal{E}_0} < \frac{1}{4\sqrt{C_0}}{\mathcal{E}} $ and $\epsilon_0 < \mathcal{E}_0$. 

Therefore we can continue the same process for $0 \leq t\leq nT_0$, $n=3, 4, 5, \cdots$, and finally obtain a global solution $(\rho, u, n, \omega) \in X(0, +\infty)$ satisfying \eqref{11111}.
\end{proof}

\section{Time decay estimates}\label{section 3}
The two parts of this section are necessary to prove the optimal time decay rates.
\subsection{Spectral analysis}
In this subsection, we carry out a detailed spectral analysis for the linearized system, based on which we derive a time decay estimate for the linearized equations. 


In order to establish a more refined estimate, which encompasses a higher decay rate that plays a crucial role in proving the optimal decay rate for the nonlinear terms, as demonstrated in equation \eqref{4.1}, we decompose the nonlinear term $G$ into two distinct components, one of which has a conserved-form and the other does not, it is precisely the non-conserved form terms present the essential difficulty. To address this challenge, we introduce the variables $ m:= \rho u $ and $ M:= n \omega $, thereby enabling the utilization of certain symmetric properties to cancel out the non-conserved terms. Specifically, the relaxation drag force terms exhibit an opposite sign in the system. Owing to the upper and lower bounds of $\rho$ and $n$, the quantities $m$ and $M$ are equivalent to $u$ and $\omega$ respectively. Consequently, we first write the equation in equivalent form with the new variables and linearize the system \eqref{1.1} around equilibrium state $(\rho_*, 0, n_*, 0)$, yielding:

\begin{equation}\label{3.1}
\left\{\begin{array}{ll}
(\rho - \rho_*)_t + m_x = 0,&\\
m_t + p'(\rho_*)(\rho-\rho_*)_x - \rho_*M + n_*m = -(mu)_x - (p'(\rho) - p'(\rho_*))(\rho - \rho_*)_x &\\
+ (\rho - \rho_*)M - (n-n_* )m,&\\
(n - n_*)_t + M_x = 0,&\\
M_t + (n-n_*)_x - M_{xx} - n_*m + \rho_* M = -(M\omega)_x - (n_x\omega)_x + (n- n_*)m &\\
- (\rho -\rho_* )M.
\end{array} \right.
\end{equation}
Without loss of generality, we take $\rho_* = 1, n_* = 1 $ and $\gamma=1$ in \eqref{3.1} and consider the linear system
\begin{equation}\label{3.2}
\left\{\begin{array}{ll}
(\rho-\rho_*)_t + m_x = 0,&\\
m_t + (\rho-\rho_*)_x - M + m = 0 &\\
(n-n_*)_t + M_x = 0,&\\
M_t + (n-n_*)_x - M_{xx} - m + M = 0.
\end{array} \right.
\end{equation}
The Fourier transform of  \eqref{3.2} yields $ \partial_t\widehat{U}(\xi,t) = A(\xi)\widehat{U}(\xi,t)$, with$$ \widehat{U}(\xi, t) = (\widehat{\rho-\rho_*},\widehat{ m}, \widehat{n-n_*}, \widehat{M} )^\top $$ and 
$$ A(\xi) = 
\left(\begin{array}{cccc}
0 & -i \xi & 0 & 0 \\
-i \xi & -1 & 0 & 1 \\
0 & 0 & 0 & -i \xi \\
0 & 1 & -i \xi & -1 -\xi^2
\end{array}\right).
$$ 
Now let us analyze the spectrum of $ A(\xi) $. The characteristic equation of $ A(\xi) $ is given by 
\begin{equation}
\begin{aligned} 
\det|\lambda I - A(\xi)| = \lambda^{4} + (\xi^{2} + 2)\lambda^{3} + 3\xi^{2}\lambda^{2} + \xi^{2}(\xi^{2} + 2)\lambda + \xi^{4}
\end{aligned}
\end{equation}
and denote by $ \lambda_i(\xi) (1\leq i \leq 4) $ the eigenvalues of matrix $A(\xi)$, by a direct computation, we obtain
\begin{lma}\label{lma 3.1}
	\begin{enumerate}
	\item There exist positive constants $ r_1 \leq r_2 $	such that $ \lambda_i(\xi) (1\leq i \leq 4) $ has the Taylor series expansion 
	\begin{equation}\label{eq:eigenvalue}
	\begin{aligned} 
		& \lambda_1 = -\frac{1}{2}\xi^{2} + o(\xi^{2}),\\
		& \lambda_2 = i\xi -\frac{1}{4}\xi^{2} + o(\xi^{2}),\\
		& \lambda_3 = -i\xi -\frac{1}{4}\xi^{2} + o(\xi^{2}),\\
		& \lambda_4 = -2 + o(\xi^{2})
	\end{aligned}
	\end{equation}
	for $ |\xi| \leq r_1 $,
	and 
	\begin{equation}
	\begin{aligned} 
		& \lambda_1 = -\xi^{2} + o(\frac{1}{\xi}), \\
		& \lambda_2 = i\xi - \frac{1}{2} + o(\frac{1}{\xi}), \\
		& \lambda_2 = - i\xi - \frac{1}{2} + o(\frac{1}{\xi}), \\
		& \lambda_2 = - 1 + o(\frac{1}{\xi})
	\end{aligned}
	\end{equation}
	for $ |\xi| \geq r_2 $.
	\item The matrix exponential $  e^{tA(\xi)} $has the spectral resolution 
	$$ e^{tA(\xi)} = \sum_{j=1}^{4} e^{t\lambda_j(\xi)}P_j,$$  
	\item $ P_j (1 \leq j \leq 4)$ has the estimate 
	$$ \||P_j\|| \leq C $$
	for $ |\xi| \leq r_1 $ and $ |\xi | \geq r_2 $, where $ \||\cdot\|| $ denotes the matrix norm.  
	\item There exists a positive constant $ \beta_1 $ such that for $ |\xi| \leq r_1 $,
	$$ Re \lambda_j(\xi) \leq - \beta_1|\xi|^{2} \quad (1 \leq j \leq 4 ), $$ 
	\item There exists a positive constant $ \beta_2 $ such that for $ |\xi| \geq r_2 $,
	$$ Re \lambda_j(\xi) \leq - \beta_2 \quad (1 \leq j \leq 4 ). $$ 
	\end{enumerate}
\end{lma}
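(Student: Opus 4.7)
The plan is to work directly from the characteristic polynomial
\[
P(\lambda,\xi)=\lambda^{4}+(\xi^{2}+2)\lambda^{3}+3\xi^{2}\lambda^{2}+\xi^{2}(\xi^{2}+2)\lambda+\xi^{4},
\]
and extract the four statements in the lemma in turn. For the low-frequency Taylor expansions, note that $P(\lambda,0)=\lambda^{3}(\lambda+2)$, so $\lambda=-2$ is a simple root which, by the implicit function theorem, perturbs analytically to $\lambda_{4}(\xi)=-2+O(\xi^{2})$. The triple root at $\lambda=0$ is the delicate branch; I would insert the ansatz $\lambda=a\xi+b\xi^{2}+\cdots$ and match powers of $\xi$. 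The $O(\xi^{3})$ balance yields $2a^{3}+2a=0$, giving $a\in\{0,\pm i\}$; then the $O(\xi^{4})$ balance forces $b=-\tfrac{1}{4}$ for $a=\pm i$ and $b=-\tfrac{1}{2}$ for $a=0$, which is exactly \eqref{eq:eigenvalue}. For the high-frequency regime I would use the substitution $\lambda=\mu\xi$ (for the three branches whose real part is $O(1)$) and $\lambda=-\xi^{2}+\nu$ for the parabolic branch; dividing by appropriate powers of $\xi$ and sending $|\xi|\to\infty$ reduces $P$ to a polynomial whose roots are $\mu=\pm i,-1$ and $\nu=0$, after which one iterates with $\mu=\pm i+\tfrac{c}{\xi}+\cdots$ to extract the $-\tfrac{1}{2}$ constant in $\lambda_{2,3}$ and the $-1$ constant in $\lambda_{4}$.

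Having established that for $|\xi|\le r_{1}$ and $|\xi|\ge r_{2}$ the four eigenvalues are mutually distinct (choose $r_{1},r_{2}$ so the asymptotics are not yet spoiled by remainders), one obtains the spectral resolution $e^{tA(\xi)}=\sum_{j=1}^{4}e^{t\lambda_{j}(\xi)}P_{j}(\xi)$ where
\[
P_{j}(\xi)=\prod_{k\neq j}\frac{A(\xi)-\lambda_{k}(\xi)I}{\lambda_{j}(\xi)-\lambda_{k}(\xi)}.
\]
Statement (2) is then immediate and statements (4)--(5) follow by reading off the real parts from the expansions: for $|\xi|\le r_{1}$ all four real parts are at most $-\tfrac{1}{4}\xi^{2}+o(\xi^{2})$ after choosing $r_{1}$ small enough to absorb the remainders, while for $|\xi|\ge r_{2}$ the expansions give real parts $\le -\tfrac{1}{2}+o(1)$, yielding the constants $\beta_{1}$ and $\beta_{2}$ (e.g.\ $\beta_{1}=\tfrac{1}{8}$ and $\beta_{2}=\tfrac{1}{4}$ after shrinking $r_{1}$ and enlarging $r_{2}$).

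The main obstacle is assertion (3), the uniform bound $\||P_{j}\||\le C$, particularly near $\xi=0$ where three eigenvalues collide at the origin so that both numerators and denominators in the Lagrange formula degenerate. I would handle this through the algebraic structure rather than the formula directly: construct right and left eigenvectors $\mathbf{r}_{j}(\xi)$, $\mathbf{l}_{j}(\xi)$ by plugging the Taylor ansatz into $(A(\xi)-\lambda_{j}(\xi))\mathbf{r}_{j}=0$ and matching powers of $\xi$. One checks that the eigenvectors emerging from the three-dimensional kernel of $A(0)$ are linearly independent at order $\xi^{0}$ (after a suitable normalization) precisely because the leading coefficients $a\in\{0,\pm i\}$ of $\lambda_{1,2,3}$ are distinct; this ensures that the normalization $\mathbf{l}_{j}^{\top}\mathbf{r}_{j}$ is bounded away from zero, so that $P_{j}=\mathbf{r}_{j}\mathbf{l}_{j}^{\top}/(\mathbf{l}_{j}^{\top}\mathbf{r}_{j})$ is uniformly bounded. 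At high frequency the eigenvalues are already well separated (two of order $\pm i\xi$, one of order $-\xi^{2}$, one of order $-1$), so $\||P_{j}\||\le C$ follows from the same eigenvector computation after a suitable rescaling of the basis to prevent the $\xi^{2}$ in $\lambda_{1}$ from blowing up the normalization. Alternatively, one can invoke Kato's analytic perturbation theory once the semisimplicity of the limit matrix structure is verified via the explicit computation of the kernel of $A(0)$ and its $\xi$-perturbation.
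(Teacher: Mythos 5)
The paper offers no proof of Lemma~\ref{lma 3.1} at all -- it states the characteristic polynomial and then asserts the conclusions ``by a direct computation.''  Your proposal therefore supplies precisely the computation the authors chose not to write out, and it is essentially correct.  I verified the low-frequency coefficient matching: with the ansatz $\lambda=a\xi+b\xi^{2}+\cdots$, the $O(\xi^{3})$ terms of $P(\lambda,\xi)$ come only from $2\lambda^{3}$ and $2\xi^{2}\lambda$ and give $2a^{3}+2a=0$, while the $O(\xi^{4})$ terms give $a^{4}+6a^{2}b+3a^{2}+2b+1=0$, which yields $b=-\tfrac12$ when $a=0$ and $b=-\tfrac14$ when $a=\pm i$, exactly as stated.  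Your treatment of statement (3) is the one genuinely nontrivial part the paper leaves untouched: the key observation, that the reduced operator acting on the three-dimensional kernel of $A(0)$ has distinct eigenvalues $\{0,\pm i\}$ at first order in $\xi$, is exactly what makes the individual eigenprojections analytic (hence uniformly bounded) near the triple collision at $\xi=0$; this is Kato's reduction process, and your appeal to it is appropriate.

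One small expository slip in the high-frequency part: you write that the substitution $\lambda=\mu\xi$ ``reduces $P$ to a polynomial whose roots are $\mu=\pm i,-1$.''  That substitution actually produces $\mu\in\{0,\pm i\}$ at leading order (the $\mu=0$ branch signals that a different scaling is needed), while the branch with $\lambda\to-1$ does not scale like $\xi$ at all -- it is seen by holding $\lambda=O(1)$ and observing $P(\lambda,\xi)/\xi^{4}\to\lambda+1$ as $|\xi|\to\infty$.  Similarly the parabolic branch is found by $\lambda=-\xi^{2}+\nu$, where one computes $P(-\xi^{2},\xi)=-\xi^{4}$ and $\partial_{\lambda}P(-\xi^{2},\xi)=-\xi^{6}+\xi^{4}+2\xi^{2}$, so $\nu=O(\xi^{-2})$.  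These three distinct scalings should be handled separately rather than folded into a single substitution, but this is a matter of presentation rather than a gap -- the computations you describe do give the stated constants $c=-\tfrac12$ and the remainders $o(1/\xi)$.
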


By Lemma \ref{lma 3.1}, $A(\xi)$ is simple and has the following  spectral representation \cite{k1976}
\begin{equation}\label{3.006}
A(\xi) = \sum_{j=1}^{4}\lambda_j(\xi)P_j(\xi),
\end{equation}
where $P_j$ is called the eigenprojection for the eigenvalue $\lambda_j(\xi)$ of $A(\xi)$ and has the following property
\begin{equation}\label{3.070}
P_j(\xi)P_k(\xi) = \delta_{jk}P_j(\xi),\quad \sum_{j=1}^{4}P_j(\xi) = I.
\end{equation}
Taking Taylor expansions at $\xi = 0$ and by \eqref{3.006} we get
\begin{equation}\label{3.080}
A(\xi) = \sum_{j=1}^{4}\left[\lambda_j(0) + \lambda^{\prime}_j(0)\xi + \cdots\right] \left[P_j(0) + P^{\prime}j(0)\xi + \cdots \right],
\end{equation}
we denote $P_{j0} :=P_j(0)$.

Comparing the constant terms on the both sides of \eqref{3.080} yields
\begin{equation*}
A(0) = \sum_{j=1}^{4}\lambda_j(0)P_{j0}.
\end{equation*}
Taking Taylor expansions in \eqref{3.070}, and then compare the constant terms to obtain
\begin{equation}\label{3.09}
P_{j0}P_{k0} = \delta_{jk}P_{j0},\quad \sum_{j=1}^{4}P_{j0} = I.
\end{equation}
A direct computation for the constant matrix $A(0)$ gives its eigenvalues:
$$\lambda_{10} = \lambda_{20} = \lambda_{30} = 0, \lambda_{40} = -2.$$
Then $\sum_{j=1}^{3}P_{j0}$ is the eigenprojection of $A(0)$ corresponding to the eigenvalue zero, it is obvious that 
\begin{equation}\label{3.0010}
P_0 := \sum_{j=1}^{3}P_{j0} = 
\left(\begin{array}{cccc}
1 & 0 & 0 & 0 \\
0 & \frac{1}{2} & 0 & \frac{1}{2} \\
0 & 0 & 1 & 0 \\
0 & \frac{1}{2} & 0 & \frac{1}{2}
\end{array}\right).
\end{equation}

Note that by \eqref{3.09}
\begin{equation}\label{3.0011}
P_{j0}P_0 = P_0P_{j0} = P_{j0}, \quad 1\leq j \leq 3.
\end{equation}

\begin{lma}
If $ h\in L^{1}(\mathbb{R}) $ and $ \partial_x^{k}h \in L^{2}(\mathbb{R})$, then 
\begin{equation}\label{3.06}
\|e^{At}(i\xi)^{k}\hat{h}(\xi)\| \leq C(1+t)^{-\frac{1}{4} - \frac{k}{2}}\|h\|_{L^{1}} + Ce^{-ct}\|\partial_x^{k}h\|,
\end{equation}
where $C$ and $c$ are positive constants. If in addition, $h$ takes the form 
\begin{equation}\label{3.007}
h = 
\left(\begin{array}{c}
0 \\
g \\
0 \\
-g
\end{array}\right),
\end{equation}
then 
\begin{equation}\label{3.07}
\|e^{At}(i\xi)^{k}\hat{h}(\xi)\| \leq C(1+t)^{-\frac{3}{4} - \frac{k}{2}}\|h\|_{L^{1}} + Ce^{-ct}\|\partial_x^{k}h\|.
\end{equation}
\end{lma}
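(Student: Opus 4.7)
The overall strategy is a standard Plancherel-based three-zone decomposition of frequency space into a low-frequency zone $|\xi|\leq r_1$, a middle zone $r_1\leq|\xi|\leq r_2$, and a high-frequency zone $|\xi|\geq r_2$, with $r_1,r_2$ as in Lemma~\ref{lma 3.1}. The high-frequency contribution uses property~(5) of Lemma~\ref{lma 3.1}, which yields $\|e^{tA(\xi)}\|\leq Ce^{-\beta_2 t}$, so Plancherel produces the term $Ce^{-ct}\|\partial_x^k h\|$. In the middle zone the spectrum stays in a compact subset of the open left half-plane and the projections are continuous, so $\|e^{tA(\xi)}\|\leq Ce^{-ct}$; since $|\xi|^{2k}$ is bounded there, this contribution is also of the same exponentially decaying form. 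Together these bounds handle the second summand on the right of both \eqref{3.06} and \eqref{3.07} without any structural assumption on $h$.

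For the general estimate \eqref{3.06}, the only zone needing care is the low-frequency one. The spectral representation $e^{tA(\xi)}=\sum_j e^{t\lambda_j(\xi)}P_j(\xi)$ together with properties~(3) and~(4) of Lemma~\ref{lma 3.1} gives $\|e^{tA(\xi)}\|\leq Ce^{-\beta_1\xi^2 t}$, and using $\|\hat h\|_{\infty}\leq\|h\|_{L^1}$ reduces matters to the elementary Gaussian estimate
\begin{equation*}
\int_{|\xi|\leq r_1}|\xi|^{2k}e^{-2\beta_1\xi^2 t}\,d\xi\leq C(1+t)^{-\frac{2k+1}{2}},
\end{equation*}
whose square root is exactly the target rate $(1+t)^{-1/4-k/2}\|h\|_{L^1}$.

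The essential new content is the improved low-frequency estimate when $h=(0,g,0,-g)^{\top}$. The key algebraic observation is that $\hat h(\xi)=\hat g(\xi)\,v$ with the constant vector $v=(0,1,0,-1)^{\top}$, and a direct computation on the matrix $A(\xi)$ yields
\begin{equation*}
A(0)v=-2v,
\end{equation*}
so $v$ is the eigenvector of $A(0)$ associated with the \emph{fast} eigenvalue $\lambda_{40}=-2$; equivalently, $P_0v=0$ and $P_{40}v=v$ by \eqref{3.0010}. Rather than decomposing $e^{tA(\xi)}v$ spectrally (which is delicate because the projections onto the triply-degenerate zero eigenvalue of $A(0)$ need not admit continuous extensions to $\xi=0$), I would use the Duhamel identity
\begin{equation*}
e^{tA(\xi)}v=e^{-2t}v+\int_0^t e^{(t-s)A(\xi)}\,w(\xi)\,e^{-2s}\,ds,\qquad w(\xi):=[A(\xi)-A(0)]v,
\end{equation*}
and observe by direct inspection of $A(\xi)$ that $w(\xi)=(-i\xi,0,i\xi,\xi^{2})^{\top}$, so $|w(\xi)|\leq C|\xi|$ as $\xi\to 0$. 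Combining this with $\|e^{(t-s)A(\xi)}\|\leq Ce^{-\beta_1\xi^2(t-s)}$ on $|\xi|\leq r_1$ and performing the elementary $s$-integral gives the pointwise bound $|e^{tA(\xi)}v|\leq Ce^{-ct}+C|\xi|e^{-c\xi^2 t}$. The extra factor of $|\xi|$ upgrades the Gaussian integral to
\begin{equation*}
\int_{|\xi|\leq r_1}|\xi|^{2k+2}e^{-2c\xi^2 t}\,d\xi\leq C(1+t)^{-\frac{2k+3}{2}},
\end{equation*}
which, after multiplication by $|\hat g(\xi)|\leq C\|h\|_{L^1}$ and a square root, is precisely the improved rate $(1+t)^{-3/4-k/2}\|h\|_{L^1}$ claimed in \eqref{3.07}.

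The main obstacle is locating and exploiting the correct structural input. One must recognize that the antisymmetric pattern $h=(0,g,0,-g)^{\top}$---which is exactly the form taken by the non-conserved drag-force coupling in the nonlinear system \eqref{3.1}---places $\hat h(\xi)$ inside the fast (exponentially damped) invariant subspace of $A(0)$, and that the perturbation $[A(\xi)-A(0)]v$ itself vanishes linearly at $\xi=0$, so the leakage into the parabolic modes only contributes a gained factor $|\xi|$. The Duhamel reformulation proposed above has the side benefit of sidestepping any use of individual projections $P_{j0}$ for $j=1,2,3$, which may fail to exist on account of the triple degeneracy of the zero eigenvalue of $A(0)$.
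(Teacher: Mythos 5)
Your proof of the general estimate \eqref{3.06} follows the same Plancherel-plus-frequency-splitting route as the paper; the only cosmetic difference is that you treat the middle zone $r_1\leq|\xi|\leq r_2$ explicitly, whereas the paper passes directly from the full integral to the two end zones and leaves the compact middle region implicit.

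For the improved estimate \eqref{3.07} your argument is correct but genuinely different from the paper's. The paper Taylor-expands the individual eigenprojections $P_j(\xi)=P_{j0}+O(|\xi|)$ for $j=1,2,3$, then observes from \eqref{3.0010}--\eqref{3.0011} that $P_{j0}\hat h=P_{j0}P_0\hat h=0$, so the slow modes contribute only the $O(|\xi|)$ remainders and the extra factor $|\xi|$ appears that way. You instead exploit the identity $A(0)v=-2v$ for $v=(0,1,0,-1)^{\top}$ directly via the Duhamel formula $e^{tA(\xi)}v=e^{-2t}v+\int_0^t e^{(t-s)A(\xi)}[A(\xi)-A(0)]v\,e^{-2s}\,ds$, computing $[A(\xi)-A(0)]v=(-i\xi,0,i\xi,\xi^2)^{\top}=O(|\xi|)$ explicitly; the extra factor $|\xi|$ now comes from the perturbation, and the only spectral input needed is the uniform low-frequency bound $\|e^{(t-s)A(\xi)}\|\leq Ce^{-\beta_1\xi^2(t-s)}$, which is already guaranteed by items (3) and (4) of Lemma~\ref{lma 3.1}. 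Your route buys some genuine robustness: the paper's Taylor expansion of the individual $P_j(\xi)$ near the triply degenerate zero eigenvalue of $A(0)$ is a stronger regularity claim than the boundedness asserted in Lemma~\ref{lma 3.1}(3), and while it is presumably verifiable by hand here, your Duhamel reformulation makes the cancellation mechanism transparent and self-contained without it. The paper's route, on the other hand, is closer to the classical Kato-style spectral decomposition and identifies precisely which eigenprojection ($P_0$) annihilates the drag-force structure, which is useful for intuition when extending to more general couplings. Both yield the same rate $(1+t)^{-3/4-k/2}$ and are acceptable proofs.
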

\begin{proof}
By Lemma \ref{lma 3.1}, we have 
\begin{equation}\label{3.08}
	\begin{aligned} 
		\|e^{At}(i\xi)^{k}\hat{h}(\xi)\|^{2}  = & \int_{\mathbb{R}} |e^{At}(i\xi)^{k}\hat{h}(\xi)|^{2} d\xi   \\
		=& \int_{\mathbb{R}} \|\sum                                                                                                                                                                                                                                                                                                                                                                                                                                                                                                                                                                                                                                                                                                                                                                                                                                                                                                                                                                                                                                                                                                                                                                                                                                                                                                                                                                                                                                                                                                                                                                                                                                                                                                                                                                                                                                                                                              _{i=1}^{4}e^{t\lambda_i(\xi)}P_i(\xi)\|^{2}|\xi|^{2k}|\hat{h(\xi)}|^{2} d\xi  \\
		\leq&\int_{|\xi|\leq r_1} e^{-2\beta_1|\xi|^{2}t}|\xi|^{2k}|\hat{h}(\xi)|^{2} d\xi + \int_{|\xi|\geq r_2} e^{-2\beta_2t}|(i\xi)^{k}\hat{h}(\xi)|^{2} d\xi  \\
		\leq&C\|\hat{h}\|_{\infty}^{2}\int_{|\xi|\leq r_1} e^{-2\beta_1|\xi|^{2}t}|\xi|^{2k}| d\xi + Ce^{-2\beta_2t}\| (i\xi)^{k}\hat{h}(\xi)\|^{2}   \\
		\leq&C(1+t)^{-\frac{1}{2}-k}\|h\|_{L^{1}}^{2} + Ce^{-2\beta_2t}\|\partial_x^{k}h\|^{2}.
	\end{aligned}
\end{equation}
Taking square root on both sides yields \eqref{3.06}.

If $h$ further satisfies \eqref{3.007}, we refine the integral over $|\xi| \leq r_1 $ to obtain \eqref{3.07} as follows. Owing to 
$$ e^{tA(\xi)} = \sum_{j=1}^{4} e^{t\lambda_j(\xi)}P_j(\xi), $$
where $ \lambda_j(\xi) $ and $ P_j(\xi) $ are holomorphic at $\xi = 0 $. Thus for $|\xi| \leq r_1$ with $r_1$ small enough, taking Taylor expansions of $P_j(\xi)$, $1 \leq j \leq 3$, and using \eqref{3.0010}, \eqref{3.0011} and \eqref{3.007}, we have 
\begin{equation*}
\begin{aligned} 
e^{tA(\xi)}(i\xi)^{k}\hat{h}(\xi) & = \sum_{j=1}^{3}e^{\lambda_j(\xi)t}[P_{j0} + O(|\xi|)](i\xi)^{k}\hat{h}(\xi) + e^{\lambda_4(\xi)t}P_4(\xi)(i\xi)^{k}\hat{h}(\xi)  \\
& = \sum_{j=1}^{3}e^{\lambda_j(\xi)t}O(|\xi|)(i\xi)^{k}\hat{h}(\xi) + e^{\lambda_4(\xi)t}P_4(\xi)(i\xi)^{k}\hat{h}(\xi).
\end{aligned}
\end{equation*}
This implies
\begin{equation}
|e^{tA(\xi)}(i\xi)^{k}\hat{h}(\xi)|  \leq C\big( \sum_{j=1}^{3}e^{Re{\lambda_j(\xi)}t}|\xi|^{k+1}|\hat{h}(\xi)| + e^{Re{\lambda_4(\xi)}t}P_4(\xi)|i\xi|^{k}|\hat{h}(\xi)|\big).
\end{equation}
By virtue of \eqref{eq:eigenvalue}, we also have
\begin{equation}\label{3.010}
Re{\lambda_4(\xi)} \leq \frac{1}{2}\lambda_4(0) \leq -c_0 
\end{equation}
for small $\xi$, where $c_0$ is a positive constant. Using Lemma \ref{lma 3.1} and \eqref{3.010}, we refine the integral over $|\xi|\leq r_1$ as 
\begin{equation}\label{3.011}
\begin{aligned} 
\int_{|\xi|\leq r_1} |e^{tA(\xi)}(i\xi)^{k}\hat{h}(\xi)|^{2} d\xi \leq&C\int_{|\xi|\leq r_1} (e^{-2\beta_1\xi^{2}t}|\xi|^{2k+2} + e^{-2c_0t})|\hat{h}(\xi)|^{2} d\xi  \\
\leq&C(1+t)^{-\frac{3}{2}-k}\|\hat{h}\|_{\infty}^{2} \leq C(1+t)^{-\frac{3}{2}-k}\|h\|_{L^{1}}^{2}.
\end{aligned}
\end{equation}
Replacing the corresponding integral in \eqref{3.08} by \eqref{3.011} gives \eqref{3.07}.
\end{proof}
\subsection{Time-weighted energy estimates}
In this subsection, using the time-weighted energy estimate, we derive decay rates for the nonlinear system. The non-optimal decay rates for higher derivatives obtained in this section support us to obtain the optimal ones given in Theorem \ref{thm 1.3}. The new idea in this subsection is that we assume optimal decay rates for lower derivatives, and perform weighted energy estimate to obtain rough estimates on higher derivatives of the solution. In section \ref{section 4}, we use these estimates and spectral analysis to obtain optimal ones for lower derivatives.

We first introduce the following notations for $m=0, 1, 2$ and $ t \geq 0$:
\begin{equation}\label{Nm}
	N_m^{2}(t) = \sup\limits_{\tau\in[0,t]}(1+ \tau)^{\frac{1}{2}+m}\|\partial^{m}_x(\rho-\rho_*, u, n-n_*, \omega)\|^{2}.
\end{equation}

\begin{pro}\label{thm 3.1}
Let $ (\rho_*, 0, n_*, 0) $ be the constant equilibrium state of \eqref{1.1} and  $ (\rho_0-\rho_*,u_0, n_0 - n_*, \omega_0) \in H^{4}(\mathbb{R}) $ satisfies the assmuption in Theorem \ref{thm 1.3}. If further $N_0$, $ N_1$ is small and $ N_2$ is bounded, then the solution of \eqref{1.1}-\eqref{1.2} \rm{{(}}or \eqref{2.2}-\eqref{2.3}\rm{{)}} given in Theorem \ref{thm 1.2} has the following estimates:
\begin{equation}
\begin{aligned} 
& \|(\rho_x, u_x, n_x, \omega_x )(t,\cdot)\|  \leq C \epsilon_0(1 + t)^{-\frac{1}{2}},  \\
& \int_{0}^{t} (1+\tau)\left(\|\omega_{xx}(\tau,\cdot)\|^{2} + \|(u-\omega)_x(\tau,\cdot)\|^{2}\right) d\tau \leq C\epsilon_0^{2}.
\end{aligned}
\end{equation}
\end{pro}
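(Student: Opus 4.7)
The goal is to establish the non-optimal decay $\|U_x(t)\| \leq C\epsilon_0(1+t)^{-1/2}$ together with the $(1+\tau)$-weighted dissipation bound on $\omega_{xx}$ and $(u-\omega)_x$, using the a priori smallness of $N_0, N_1$ and the boundedness of $N_2$. My plan is to run a time-weighted analogue of the first-order energy identities of Lemmas \ref{thm 2.3}--\ref{thm 2.4}, in which the temporal weight $(1+\tau)$ is inserted before integration, and the remainder produced by $\partial_\tau(1+\tau)=1$ is absorbed using the global bound $\int_0^\infty \|(\rho_x, u_x, n_x, u_\tau, \omega_{xx}, u-\omega)(\tau, \cdot)\|^2\,d\tau \leq C\epsilon_0^2$ already furnished by Theorem \ref{thm 1.2}.

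Concretely, I would start from a combined first-order energy identity of the schematic form
\begin{equation*}
\frac{d}{dt}\mathcal{E}_1(t) + \mathcal{D}_1(t) \leq \mathcal{R}(t),
\end{equation*}
where $\mathcal{E}_1(t) \sim \|(v_x, u_x, n_x, \omega_x, v_t, u_t)(t,\cdot)\|^2$ (the $(v_t, u_t)$ components are needed because the Euler block has no native parabolic dissipation, and $\|v_x\|^2$ must be recovered from $|v_x| \leq C(|u_t|+|u_x|+|u-\omega|)$ through $\eqref{2.2}_2$, cf.\ \eqref{2.64}), the dissipation satisfies $\mathcal{D}_1(t) \gtrsim \|(u_x, n_x, u_t, \omega_{xx}, (u-\omega)_x)(t,\cdot)\|^2$, and $\mathcal{R}(t)$ collects the nonlinear remainders. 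Multiplying by $(1+t)$ and integrating in $\tau\in[0,t]$ yields
\begin{equation*}
(1+t)\mathcal{E}_1(t) + \int_0^t(1+\tau)\mathcal{D}_1(\tau)\,d\tau \leq \mathcal{E}_1(0) + \int_0^t \mathcal{E}_1(\tau)\,d\tau + \int_0^t (1+\tau)\mathcal{R}(\tau)\,d\tau.
\end{equation*}
The linear remainder $\int_0^t \mathcal{E}_1\,d\tau$ is bounded by $C\epsilon_0^2$ directly from Theorem \ref{thm 1.2}, since every component of $\mathcal{E}_1$ either appears in the unweighted dissipation or is controlled from the equation.

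For the nonlinear piece I would distribute the Gagliardo--Nirenberg inequality \eqref{Sob} so that each $L^\infty$ factor carries a decaying prefactor: using the boundedness of $N_2$, one has $\|U\|_{L^\infty} \leq C N_0^{1/2}N_1^{1/2}(1+\tau)^{-1/2}$ and $\|U_x\|_{L^\infty} \leq C N_1^{1/2}N_2^{1/2}(1+\tau)^{-1}$. With these pointwise decays the $(1+\tau)$ weight is cancelled in every cubic term, and the surviving two $L^2$ factors are either identified with pieces of $(1+\tau)\mathcal{D}_1$ on the left, with a prefactor made small by $N_0, N_1$, or with $\tau$-integrable tails of the type $(1+\tau)^{-1-\delta}$ that integrate to a constant. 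For the quadratic terms (the linear drag-like couplings $n_x(\omega-u)u_x$, $n\omega_x u_x$, etc.) I would split by Young's inequality into a small multiple of $\mathcal{D}_1$ plus $\|(\omega_x,n_x,u-\omega)\|^2$, whose weighted $\tau$-integrals require an iterated scheme: one first derives $(1+t)\|\omega_x\|^2$ and $(1+t)\|n_x\|^2$ bounds by the same weighted procedure (using Lemmas \ref{thm 2.2}--\ref{thm 2.3}), which then feed into the $(v_x, u_x)$ estimate.

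The main obstacle will be the absence of parabolic dissipation in the Euler block, which forces the weighted estimate to propagate through the $(v_t, u_t)$-system exactly as in Lemma \ref{thm 2.4}. This produces cross-terms such as $(1+\tau)\int n\omega_\tau u_\tau\,dx$, in which $n\omega_\tau$ contains $\omega_{xx}$ and $\rho(u-\omega)$; these are handled by Cauchy--Schwarz and absorbed into the $\|\omega_{xx}\|^2$ and $\|(u-\omega)\|^2$ pieces of $(1+\tau)\mathcal{D}_1$. A further bookkeeping difficulty is that the weighted $\omega_x$--equation produces terms of the form $(1+\tau)\int n_x\omega_{xx}\,dx$ and $(1+\tau)\int \rho(u-\omega)\omega_{xx}\,dx$, which require the $\|n_x\|^2$ dissipation plus an unweighted control of $\|(u-\omega)\|^2$; both are available. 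Once all these absorptions are performed and $N_0, N_1$ are taken sufficiently small, one obtains
\begin{equation*}
(1+t)\mathcal{E}_1(t) + \int_0^t (1+\tau)\bigl(\|\omega_{xx}\|^2 + \|(u-\omega)_x\|^2\bigr)(\tau,\cdot)\,d\tau \leq C\epsilon_0^2,
\end{equation*}
which yields both conclusions of Proposition \ref{thm 3.1} at once.
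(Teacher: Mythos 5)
Your proposal has a genuine gap in the choice of multipliers, which is exactly where the paper's proof concentrates its effort. You propose to transplant the multipliers of Lemmas~\ref{thm 2.3}--\ref{thm 2.4} ($v_x,u_x,v_t,u_t,-\omega_{xx}$) into a time-weighted setting and assert a dissipation $\mathcal{D}_1\gtrsim\|(u_x,n_x,u_t,\omega_{xx},(u-\omega)_x)\|^2$. With those multipliers the drag terms never combine into $\rho n(u-\omega)_x^2$: the differentiated $u$-equation tested against $u_x$ gives $nu_x^2$ dissipation plus an uncancelled cross term $\int n\omega_x u_x\,dx$, exactly as in \eqref{2.62}. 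But a weighted bound $\int_0^t(1+\tau)\|u_x\|^2\,d\tau\leq C\epsilon_0^2$ is simply false: the optimal decay $\|u_x\|^2\sim(1+t)^{-3/2}$ from Theorem~\ref{thm 1.3} makes this integral grow like $(1+t)^{1/2}$. The same is true for $\int_0^t(1+\tau)\|\omega_x\|^2\,d\tau$, so the Young split of $\int n\omega_x u_x$ you propose feeds an unbounded quantity back into the estimate and the argument cannot close. Only the combination $(u-\omega)_x$ has the extra half-power of decay that makes the weighted integral finite, and nothing in your setup produces it.

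What makes the estimate close in the paper is an asymmetric choice of multipliers that cancels the drag contributions: the differentiated Euler block $\eqref{2.2}_1,\eqref{2.2}_2$ is tested against $\rho v_x$ and $\rho u_x$ (not $v_x$, $u_x$), while the Navier--Stokes block, after division by $n$, is tested against $n_x/n$ and $n\omega_x$. The drag contributions are then $-\rho n(u-\omega)_xu_x$ from the $u$-side and $+\rho n(u-\omega)_x\omega_x$ from the $\omega$-side, which add to $-\rho n(u-\omega)_x^2$, yielding the clean quadratic dissipation in the velocity difference on the left of \eqref{3.23}. A second ingredient you do not identify is the sharper bound $\|u-\omega\|_\infty\lesssim(1+\tau)^{-1}$, obtained from the Duhamel representation \eqref{3.17} of the relaxation equation \eqref{3.16}; the extra half power of decay compared to the $(1+\tau)^{-1/2}$ you would get from $\|u\|_\infty+\|\omega\|_\infty$ is precisely what cancels the weight $(1+\tau)$ against terms like $\int_0^t(1+\tau)\rho n_x(\omega-u)u_x\,dx\,d\tau$ and $\int_0^t(1+\tau)n\rho_x(u-\omega)\omega_x\,dx\,d\tau$. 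Finally, the paper never carries $(v_t,u_t)$ through this weighted estimate at all: the quantity $M_1^2$ it controls is just $\sup_{\tau}(1+\tau)\|(\rho_x,u_x,n_x,\omega_x)\|^2$ plus the weighted integral of $\|\omega_{xx}\|^2+\|(u-\omega)_x\|^2$, and the remainder from $\partial_\tau(1+\tau)=1$ is absorbed directly by the unweighted dissipation of Lemmas~\ref{thm 2.1}--\ref{thm 2.4}.
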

\begin{proof}
For $ t \geq 0 $, we define 
$$
\begin{array}{ll}
\di M_1^{2}(t) := \sup\limits_{0 \leq \tau \leq t} \bigg\{(1+\tau)\|(\rho_x, u_x, n_x, \omega_x)(\tau,\cdot)\|^{2} \\[4mm]
\di \qquad\qquad + \int_{0}^{t}(1 + \tau)\left(\|\omega_{xx}(\tau,\cdot)\|^{2} + \|(u - \omega)_x(\tau,\cdot)\|^{2}\right) d\tau\bigg\}.
\end{array}
$$
Our goal is to prove 
$$M_1^{2}(t) \leq C\epsilon_0^{2},$$
where $C$ is a positive constant. In what follows, we assume that $M_1(t), N_0(t), N_1(t)$ are small and $N_2(t)$ is bounded.

We first use $ N_0, N_1$ and $ N_2 $ to express some $ L^{\infty} $ norms needed in this section. Using Lemma \ref{lma 1.2}, we obtain 
\begin{equation}\label{3.13}
\begin{aligned} 
& \|\rho - \rho_*\|_{\infty} \leq C\|\rho - \rho_*\|^{\frac{1}{2}}\|(\rho - \rho_*)_x\|^{\frac{1}{2}} \leq CN_0^{\frac{1}{2}}N_1^{\frac{1}{2}}(1 + t)^{-\frac{1}{2}}, \\
& \|n - n_*\|_{\infty} \leq C\|n - n_*\|^{\frac{1}{2}}\|(n - n_*)_x\|^{\frac{1}{2}} \leq CN_0^{\frac{1}{2}}N_1^{\frac{1}{2}}(1 + t)^{-\frac{1}{2}},  \\
& \|u_x\|_{\infty} \leq C\|u_x\|^{\frac{1}{2}}\|u_{xx}\|^{\frac{1}{2}} \leq CN_1^{\frac{1}{2}}N_2^{\frac{1}{2}}(1 + t)^{-1}, \\
& \|v_x\|_{\infty} \leq C\|v_x\|^{\frac{1}{2}}\|v_{xx}\|^{\frac{1}{2}} \leq CN_1^{\frac{1}{2}}N_2^{\frac{1}{2}}(1 + t)^{-1}, \\
& \|n_x\|_{\infty} \leq C\|n_x\|^{\frac{1}{2}}\|n_{xx}\|^{\frac{1}{2}} \leq CN_1^{\frac{1}{2}}N_2^{\frac{1}{2}}(1 + t)^{-1}, \\
& \|\omega_x\|_{\infty} \leq C\|\omega_x\|^{\frac{1}{2}}\|\omega_{xx}\|^{\frac{1}{2}} \leq CN_1^{\frac{1}{2}}N_2^{\frac{1}{2}}(1 + t)^{-1}. \\
\end{aligned}
\end{equation}
 
Next, we estimate $ \|u - \omega\|_{\infty} $, note that 
\begin{equation}\label{3.6}
\begin{aligned} 
u_t + \sigma_*v_x - n_*(\omega - u) = - uu_x - \frac{\gamma - 1}{2}vv_x + (n - n_*)(\omega - u),
\end{aligned}
\end{equation}
\begin{equation}\label{3.7}
\begin{aligned} 
\omega_t - \omega_{xx} - \rho_*(u - \omega) = -\omega\omega_x - \frac{1}{n}n_x - \frac{1}{n}n_x\omega_x + (\rho - \rho_*)(u - \omega),
\end{aligned}
\end{equation}
subtracting \eqref{3.7} from \eqref{3.6} yields, 
\begin{equation}\label{3.16}
\begin{aligned} 
& (u - \omega)_t + (\rho_* + n_*)(u - \omega) = -\sigma_*v_x -uu_x - \frac{\gamma - 1}{2}vv_x \\
& -\omega_{xx} + \omega\omega_x + \frac{1}{n}n_x + \frac{1}{n}n_x\omega_x + (n_* -n + \rho_* - \rho_{0*})(u - \omega) :=R,
\end{aligned}
\end{equation}
thus, we obtain
\begin{equation}\label{3.17}
u - \omega = e^{-(\rho_* + n_*)t}(u_0 - \omega_0) + \int_{0}^{t}  e^{-(\rho_* + n_*)(t - \tau)}R(x,\tau) d\tau.
\end{equation}
Therefore, denote $ c := \rho_* + n_* >0 $, we have
$$ \|u - \omega\|_{\infty} \leq  e^{-ct}\|(u_0 - \omega_0)\|_{\infty} + \int_{0}^{t}  e^{-c(t - \tau)}\|R(\tau,x)\|_{\infty} d\tau. $$
Owing to the expression of $R$ and Lemma \ref{lma 1.2}, 
\begin{equation}
\begin{aligned} 
 \|R(t,\cdot)\|_{\infty}  \leq & C\big(\|v_x\|_{\infty} + \|u_x\|_{\infty} + \|n_x\|_{\infty} + \|\omega_{xx}\|_{\infty} + \|n-n_*\|_{\infty}\|u-\omega\|_{\infty}  \\
& + \|\rho - \rho_*\|_{\infty}\|u-\omega\|_{\infty}\big)  \\
\leq&C\big(\|v_x\|^{\frac{1}{2}}\|v_{xx}\|^{\frac{1}{2}} + \|u_x\|^{\frac{1}{2}}\|u_{xx}\|^{\frac{1}{2}} + \|n_x\|^{\frac{1}{2}}\|n_{xx}\|^{\frac{1}{2}} \\
& + \|\omega_{xx}\|^{\frac{1}{2}}\|\omega_{xxx}\|^{\frac{1}{2}} + \|n-n_*\|^{\frac{1}{2}}\|(n-n_*)_x\|^{\frac{1}{2}}\|u-\omega\|_{\infty} \\
&					 + \|\rho - \rho_*\|^{\frac{1}{2}}\|(\rho - \rho_*)_x\|^{\frac{1}{2}}\|u-\omega\|_{\infty}\big),  
\end{aligned}
\end{equation}
hence,
\begin{equation}\label{3.12}
\begin{aligned} 
& \int_{0}^{t} e^{-c(t-\tau)}\|R(\tau,\cdot)\|_{\infty} d\tau \\
\leq&CN_1^{\frac{1}{2}}N_2^{\frac{1}{2}} \int_{0}^{t} e^{-c(t-\tau)}(1 + \tau)^{-1} d\tau + C\int_{0}^{t} e^{-c(t-\tau)}\|\omega_{xx}\|^{\frac{1}{2}}\|\omega_{xxx}\|^{\frac{1}{2}} d\tau  \\
& + CN_0^{\frac{1}{2}}N_1^{\frac{1}{2}} \int_{0}^{t} e^{-c(t-\tau)}(1 + \tau)^{-\frac{1}{2}} \|(u -\omega)\|_{\infty} d\tau.
\end{aligned}
\end{equation}
We estimate terms on the right hand side of \eqref{3.12}. A direct calculation gives
\begin{equation*}
\begin{aligned} 
& CN_1^{\frac{1}{2}}N_2^{\frac{1}{2}}\int_{0}^{t} e^{-c(t-\tau)}(1 + \tau)^{-1} d\tau \leq CN_1^{\frac{1}{2}}N_2^{\frac{1}{2}}(1 + t)^{-1},  
\end{aligned}
\end{equation*}
\begin{equation*}
	\begin{aligned} 
& \int_{0}^{t} e^{-c(t-\tau)}\|\omega_{xx}\|^{\frac{1}{2}}\|\omega_{xxx}\|^{\frac{1}{2}} d\tau = \int_{0}^{t} e^{-c(t-\tau)}\|\omega_{xx}\|^{\frac{3}{8}}\|\omega_{xx}\|^{\frac{1}{8}}\|\omega_{xxx}\|^{\frac{1}{2}} d\tau  \\
\leq&N_2^{\frac{3}{8}} \int_{0}^{t} e^{-c(t-\tau)}(1 + \tau)^{-\frac{31}{32}}\big((1 + \tau)^{2}\|\omega_{xxx}\|^{2}\big)^{\frac{1}{4}} \|\omega_{xx}\|^{\frac{1}{8}} d\tau \\
\leq&CN_2^{\frac{3}{8}} \big(\int_{0}^{t} (1 + \tau)^{2}\|\omega_{xxx}\|^{2} d\tau \big)^{\frac{1}{4}} \big(\int_{0}^{t} e^{-c(t-\tau)}(1 + \tau)^{-\frac{31}{24}}\|\omega_{xx}\|^{\frac{1}{6}}  d\tau\big)^{\frac{3}{4}}  \\
\leq &CN_2^{\frac{3}{8}}M_2^{\frac{1}{2}} \big(\int_{0}^{t} e^{-c(t-\tau)}(1 + \tau)^{-\frac{33}{24}}\big((1 + \tau)\|\omega_{xx}\|^{2} \big)^{\frac{1}{12}} d\tau \big)^{\frac{3}{4}}  \\
\leq&CN_2^{\frac{3}{8}}M_2^{\frac{1}{2}} \big(\int_{0}^{t} (1 + \tau)\|\omega_{xx}\|^{2} d\tau \big)^{\frac{3}{48}} \big( \int_{0}^{t} e^{- c(t - \tau)}(1 + \tau)^{-\frac{3}{2}} d\tau\big)^{\frac{11}{16}} \\
\leq&CN_2^{\frac{3}{8}}M_2^{\frac{1}{2}}M_1^{\frac{3}{24}}(1 + \tau)^{-\frac{33}{32}},  \\
\end{aligned}
\end{equation*}
where we have used H\"older's inequality two times.
\begin{equation*}
\begin{aligned} 
& N_0^{\frac{1}{2}}N_1^{\frac{1}{2}}\int_{0}^{t} e^{-c(t-\tau)}(1 + \tau)^{-\frac{1}{2}} \|u -\omega\|_{\infty} d\tau  \\
\leq&CN_0^{\frac{1}{2}}N_1^{\frac{1}{2}}(1 + t)^{-\frac{3}{2}} \sup\limits_{0 \leq \tau \leq t}\left[(1 + \tau)\|u - \omega\|_{\infty} \right].
\end{aligned}
\end{equation*}
Plugging the above estimates into \eqref{3.12}, we obtain
\begin{equation}
\begin{aligned} 
& \int_{0}^{t} e^{-c(t-\tau)} \|R(\tau,\cdot)\|_{\infty} d\tau \\
\leq&CN_1^{\frac{1}{2}}N_2^{\frac{1}{2}}(1 + t)^{-1} + CN_2^{\frac{3}{8}}M_2^{\frac{1}{2}}M_1^{\frac{3}{24}}(1 + \tau)^{-\frac{33}{32}}  \\
& + CN_0^{\frac{1}{2}}N_1^{\frac{1}{2}}(1 + t)^{-\frac{3}{2}} \sup\limits_{0 \leq \tau \leq t}\left[(1 + \tau)\|u - \omega\|_{\infty}\right].
\end{aligned}
\end{equation}
Therefore, we obtain
\begin{equation}
\begin{aligned} 
& \|u - \omega\|_{\infty} \leq e^{-ct}\|(u_0 - \omega_0)\|_{\infty} + \int_{0}^{t}  e^{-c(t - \tau)}\|R(\tau,\cdot)\|_{\infty} d\tau \\
\leq&e^{-ct}\|(u_0 - \omega_0)\|_{\infty} +  CN_1^{\frac{1}{2}}N_2^{\frac{1}{2}}(1 + t)^{-1} + CN_2^{\frac{3}{8}}M_2^{\frac{1}{2}}M_1^{\frac{3}{24}}(1 + \tau)^{-\frac{33}{32}}  \\
& + CN_0^{\frac{1}{2}}N_1^{\frac{1}{2}}(1 + t)^{-\frac{3}{2}} \sup\limits_{0 \leq \tau \leq t}\left[(1 + \tau)\|u - \omega\|_{\infty}\right],
\end{aligned}
\end{equation}
which implies
\begin{equation}
\begin{aligned} 
\sup\limits_{0 \leq \tau \leq t}\left[(1 + \tau)\|u - \omega\|_{\infty} \right] \leq C\left[\|(u_0 - \omega_0)\|_{\infty} + N_1^{\frac{1}{2}}N_2^{\frac{1}{2}} + N_2^{\frac{3}{8}}M_2^{\frac{1}{2}}M_1^{\frac{3}{24}}\right]
\end{aligned}
\end{equation}
for small $ N_0, N_1 $. 
Using Sobolev embedding inequality, we have 
\begin{equation}\label{3.025}
\begin{aligned} 
\|(u - \omega)(\tau,\cdot)\|_{\infty} \leq C\left(\|(u_0 - \omega_0)\|_{H^{1}} + N_1^{\frac{1}{2}}N_2^{\frac{1}{2}} + N_2^{\frac{3}{8}}M_2^{\frac{1}{2}}M_1^{\frac{3}{24}}\right)(1 + \tau)^{-1}.
\end{aligned}
\end{equation}

Now, we start the weighted energy estimate. Differentiating $\eqref{2.2}_1, \eqref{2.2}_2$ with $x$, multiplying them by $ \rho v_x, \rho u_x $ respectively,
differentiating $ \eqref{2.2}_3 $ and $ \eqref{2.2}_4 $ with $x$, multiplying $\eqref{2.2}_3$ by $ n_x $ and then divided by $n$, multiplying $\eqref{2.2}_4$ by $ n\omega_x $, then adding them together, we can obtain
\begin{equation}
\begin{aligned}
& \frac{1}{2}\rho (v_x^{2} + u_x^{2})_t + \frac{1}{2n}(n_x^{2})_t + \frac{1}{2}n(\omega_x^{2})_t + \sigma_{*}\rho (u_xv_x)_x + (n_x\omega_{xx})_x + \rho n(u-\omega)_x^{2}   \\
 =& -\frac{1}{2}\rho u_x^{3} -\frac{\gamma}{2}\rho u_xv_x^{2}
+ \rho n_x(\omega - u)u_x - \frac{1}{n}n_xn_{xx}\omega - \frac{2}{n}n_x^{2}\omega_x - n\omega_x^{3}  \\
&- n\omega\omega_x\omega_{xx} + \frac{1}{n}\omega_xn_x^{2} + \omega_x^{2}n_{xx} - \frac{1}{n}\omega_x^{2}n_x^{2} + n_x\omega_x\omega_{xx} + n\omega_x\omega{xxx}   \\
& + n\rho_x(u-\omega)\omega_x.  
\end{aligned}
\end{equation}

We replace the time variable by $ \tau$, multiply the equation by the weighted function $ (1 + \tau) $, and integrate the result over $ \mathbb{R}\times [0, t] $. After integrating by parts, we have 
\begin{equation}\label{3.23}
\begin{aligned}
& \frac{1}{2}\int_{\mathbb{R}} (1+t)(\rho v_x^{2} + \rho u_x^{2} + \frac{1}{n}n_x^{2} + n\omega_x^{2}) dx \\
& + \int_{0}^{t}\int_{\mathbb{R}} (1+\tau)(n\omega_{xx}^{2} + \rho n(u-\omega)_x^{2}) dxd\tau   \\
=& \frac{1}{2}\int_{\mathbb{R}} \big(\rho_0 u_{0x}^{2} + \rho_0 v_{0x}^{2} + \frac{1}{n_0}n_{0x}^{2} + n_0\omega_{0x}^{2}\big) dx + \int_{0}^{t}\int_{\mathbb{R}} (1+\tau)\bigg[-\rho u_x^{3}   \\
& - \frac{1}{2}(\gamma+1)\rho u_xv_x^{2} - \frac{1}{2}\rho_xuu_x^{2}   - \frac{1}{2}\rho_xuv_x^{2} + \rho n_x(\omega - u)u_x - n\omega_x^{3}   \\
& +\frac{1}{2}n_x\omega\omega_x^{2} - \frac{1}{n}\omega_x^{2}n_x^{2} 
-2n_x\omega_x\omega_{xx} + n\rho_x(u-\omega)\omega_x\bigg] dxd\tau   \\
& + \frac{1}{2}\int_{0}^{t}\int_{\mathbb{R}}(\rho u_x^{2} + \rho v_x^{2} + \frac{1}{n}n_x^{2} + n\omega_x^{2}) dxd\tau. 
\end{aligned}
\end{equation}
By Lemma \ref{lma 1.2} and \eqref{3.13},
\begin{equation*}
\begin{aligned}
& \int_{0}^{t}\int_{\mathbb{R}} (1+\tau)\rho u_x^{3} dxd\tau  \\
\leq &\int_{0}^{t} (1+\tau) \|\rho\|_{\infty}\|u_x\|_{\infty}\|u_x\|^{2} d\tau 
\leq CN_1^{\frac{1}{2}}N_2^{\frac{1}{2}}\int_{0}^{t} \|u_x\|^{2} d\tau. 
\end{aligned}
\end{equation*}
Similarly,
\begin{equation*}
\begin{aligned}
& \int_{0}^{t}\int_{\mathbb{R}}  (1+\tau)\rho u_xv_x^{2} dxd\tau \leq CN_1^{\frac{1}{2}}N_2^{\frac{1}{2}}\int_{0}^{t} \|v_x\|^{2} d\tau,  \\
& \int_{0}^{t}\int_{\mathbb{R}} (1+\tau)\rho n_x(\omega - u)u_x dxd\tau  \\
 \leq&  C\big(\|(u_0 - \omega_0)\|_{\infty} + N_1^{\frac{1}{2}}N_2^{\frac{1}{2}}  + N_2^{\frac{3}{8}}M_2^{\frac{1}{2}}M_1^{\frac{3}{24}}\big) \int_{0}^{t}\|(u_x,n_x)\|^{2} d\tau,  \\
& \int_{0}^{t}\int_{\mathbb{R}}   (1+t)n\omega_x^{3} dxd\tau \leq CN_1^{\frac{1}{2}}N_2^{\frac{1}{2}}\int_{0}^{t} \|\omega_x\|^{2} d\tau,  \\
& \int_{0}^{t}\int_{\mathbb{R}}  (1+\tau)\rho_xuu_x^{2} dxd\tau \leq CN_0^{\frac{1}{2}}N_1N_2^{\frac{1}{2}}M_1^{2},  \\
& \int_{0}^{t}\int_{\mathbb{R}}  (1+t)\rho_xuv_x^{2} dxd\tau  \leq CN_0^{\frac{1}{2}}N_1N_2^{\frac{1}{2}}M_1^{2},   \\
& \int_{0}^{t}\int_{\mathbb{R}}  (1+t)n_x\omega\omega_x^{2} dxd\tau \leq CN_0^{\frac{1}{2}}N_1N_2^{\frac{1}{2}}M_1^{2},   \\
& \int_{0}^{t}\int_{\mathbb{R}}  (1+\tau)\frac{1}{n}\omega_x^{2}n_x^{2} dxd\tau \leq CN_1N_2M_1^{2}.  \\
\end{aligned}
\end{equation*}
By virtue of Cauchy-Schwarz inequality,  Lemma \ref{lma 1.2} and Young's inequality
\begin{equation*}
\begin{aligned}
& \int_{0}^{t}\int_{\mathbb{R}}  (1+\tau)n_x\omega_x\omega_{xx} dxd\tau   \\
\leq&C\int_{0}^{t} (1+\tau)\|\omega_x\|^{\frac{1}{2}}\|n_x\|\|\omega_{xx}\|^{\frac{3}{2}} d\tau  \\
\leq&\delta \int_{0}^{t} (1+\tau)\|\omega_{xx}\|^{2} d\tau + C\int_{0}^{t}\|\omega_x\|^{2}\|n_x\|^{4} d\tau \\
\leq&\delta \int_{0}^{t} (1+\tau)\|\omega_{xx}\|^{2} d\tau + CM_1^{4}M_1^{2}.
\end{aligned}
\end{equation*}
Using Cauchy-Schwarz inequality and \eqref{3.025},
\begin{equation*}
\begin{aligned}
& \int_{0}^{t}\int_{\mathbb{R}} (1+\tau)n\rho_x(u-\omega)\omega_x dxd\tau \\
\leq &\int_{0}^{t} (1+\tau)\|n\|_{\infty}\|(u-\omega)\|_{\infty}\|\rho_x\|\|\omega_x\| d\tau \\
\leq&C\left(\|(u_0 - \omega_0)\|_{H^{1}} + N_1^{\frac{1}{2}}N_2^{\frac{1}{2}} + N_2^{\frac{3}{8}}M_2^{\frac{1}{2}}M_1^{\frac{3}{24}}\right) \int_{0}^{t} \|(v_x,\omega_x)\|^{2} d\tau.
\end{aligned}
\end{equation*}
Plugging these estimates into \eqref{3.23} and choosing $ \delta $ sufficiently small, then combining with Theorem \ref{thm 1.2} we can obtain
\begin{equation}\label{3.37}
\begin{aligned}
M_1^{2 } \leq C\epsilon_0^{2} + C\left(N_1N_2 + N_0^{\frac{1}{2}}N_1N_2^{\frac{1}{2}} + M_1^{4}\right)M_1^{2}, 
\end{aligned}
\end{equation}
which implies
\begin{equation}
\begin{aligned}
\left[1-C(N_1N_2 + N_0^{\frac{1}{2}}N_1N_2^{\frac{1}{2}} + M_1^{4})\right]M_1^{2} \leq C\epsilon_0^{2}.
\end{aligned}
\end{equation}
Therefore, if $ N_0, N_1, M_1 $ are small and $ N_2 $ is bounded, we get 
\begin{equation}
\begin{aligned}
M_1^{2}(t) \leq C\epsilon_0^{2}.
\end{aligned}
\end{equation}
\end{proof}


Subsequently, we present five propositions. Proposition \ref{thm 3.2} is the time-weighted energy estimate for the second derivative and its proof is similar to Proposition \ref{thm 3.1}. Propositions \ref{3 order} and \ref{4 order} are the base to proof of Propositions \ref{thm 3.3} and \ref{thm 3.4},  which together with Proposition \ref{thm 3.1}, \ref{thm 3.2} will be needed in the proof of Proposition \ref{pro 4.1}. Their proofs are similar to Lemmas \ref{thm 2.5}, \ref{thm 2.6}, Propositions \ref{thm 3.1} and \ref{thm 3.2} respectively. we omit the details. 
\begin{pro}\label{thm 3.2}
Let $ (\rho_*, 0, n_*, 0) $ be the constant equilibrium state of \eqref{1.1} and  $ (\rho_0-\rho_*,u_0, n_0 - n_*, \omega_0) \in H^{4}(\mathbb{R}) $ satisfies the assumptions in Theorem \ref{thm 1.3}. If further $N_0$, $ N_1$ is small and $ N_2$ is bounded, the solution of \eqref{1.1}-\eqref{1.2} {\rm{(}}or \eqref{2.2}-\eqref{2.3} {\rm{)}} given in Theorem \ref{thm 1.2} has the following estimates:
\begin{equation}
\begin{aligned}
&  \|(\rho_{xx}, u_{xx}, n_{xx}, \omega_{xx})(t,\cdot)\|   \leq C(\epsilon_0 + {\mathcal{E}_0})(1+t)^{-1}  \\
& \int_{0}^{t} (1+\tau)^{2} \left(\|\omega_{xxx}(\tau,\cdot)\|^{2} + \|(u-\omega)_{xx}(\tau,\cdot)\|^{2}\right) d\tau \leq C (\epsilon_0^{2} +{\mathcal{E}^2_0}).
\end{aligned}
\end{equation}
\end{pro}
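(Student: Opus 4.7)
The plan is to carry out a time-weighted energy estimate at second-derivative order, extending the framework of Proposition \ref{thm 3.1} and combining it with the dissipative identity produced in Lemma \ref{thm 2.5}. Define
$$
M_2^{2}(t):=\sup_{0\le\tau\le t}(1+\tau)^{2}\|(\rho_{xx},u_{xx},n_{xx},\omega_{xx})(\tau,\cdot)\|^{2}+\int_0^{t}(1+\tau)^{2}\big(\|\omega_{xxx}\|^{2}+\|(u-\omega)_{xx}\|^{2}\big)d\tau.
$$
The goal is to show $M_2^{2}(t)\le C(\epsilon_0^{2}+\mathcal{E}_0^{2})$ under the standing assumption that $N_0$, $N_1$, $M_1$ are small and $N_2$ is bounded; with this at hand the $L^{\infty}$ catalog \eqref{3.13}, the sharp decay \eqref{3.025}, and the weighted first-derivative bound $M_1^{2}\le C\epsilon_0^{2}$ of Proposition \ref{thm 3.1} supply all lower-order ingredients.

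I would first reproduce, at one derivative higher, the multiplier combination used in Proposition \ref{thm 3.1} and Lemma \ref{thm 2.5}: differentiate $\eqref{2.2}_1$, $\eqref{2.2}_2$ twice in $x$ and test against $\rho v_{xx}$ and $\rho u_{xx}$; combine $\partial_x^{2}\eqref{2.2}_3$ tested against $n_{xx}/n$ with $\partial_x(\eqref{2.2}_4/n)$ tested against $-n\omega_{xxx}$, exactly as in the derivation of \eqref{2.047}, so that the viscous term generates the principal dissipation $n\omega_{xxx}^{2}$. Summing the four identities, introducing the weight $(1+\tau)^{2}$, and integrating over $\mathbb{R}\times[0,t]$, the drag-force cross-cancellation between the $u$- and $\omega$-momentum equations at second order produces the dissipation $\rho n(u-\omega)_{xx}^{2}$ in complete analogy with the first-order identity \eqref{3.23}. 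The resulting master identity has the schematic form
\begin{equation*}
\begin{aligned}
&(1+t)^{2}\|(\rho_{xx},u_{xx},n_{xx},\omega_{xx})(t)\|^{2}+\int_0^{t}(1+\tau)^{2}\big(\|\omega_{xxx}\|^{2}+\|(u-\omega)_{xx}\|^{2}\big)d\tau\\
&\le C\mathcal{E}_0^{2}+C\int_0^{t}(1+\tau)\|(\rho_{xx},u_{xx},n_{xx},\omega_{xx})\|^{2}d\tau+\mathcal{R},
\end{aligned}
\end{equation*}
and the weight-derivative commutator on the right is directly controlled by Proposition \ref{thm 3.1}.

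The nonlinear residual $\mathcal{R}$ is handled term-by-term using \eqref{3.13} and \eqref{3.025}. Cubic contributions such as $(1+\tau)^{2}u_x u_{xx}^{2}$ are controlled by $\|u_x\|_\infty\lesssim N_1^{1/2}N_2^{1/2}(1+\tau)^{-1}$, which reduces them to $CN_1^{1/2}N_2^{1/2}\int(1+\tau)\|u_{xx}\|^{2}d\tau\le CN_1^{1/2}N_2^{1/2}M_1^{2}$. Triple-product remainders carrying $\omega_{xxx}$, for example $(1+\tau)^{2}n_{xx}\omega_{xx}\omega_{xxx}$ or $(1+\tau)^{2}n\omega\omega_{xx}\omega_{xxx}$, are handled by Young's inequality, absorbing $\delta\int(1+\tau)^{2}\|\omega_{xxx}\|^{2}d\tau$ into the dissipation and controlling the rest through the weighted bounds of Proposition \ref{thm 3.1}. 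Drag-force remainders like $(1+\tau)^{2}n_{xx}(\omega-u)u_{xx}$ and $(1+\tau)^{2}n_x(\omega-u)_x u_{xx}$ use \eqref{3.025} and a parallel Duhamel-based estimate for $\|(u-\omega)_x\|_{\infty}$ obtained by differentiating \eqref{3.16} once in $x$ and iterating the argument leading to \eqref{3.025}.

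The main obstacle is controlling the commutator remainders produced when recovering the dissipation $\rho n(u-\omega)_{xx}^{2}$: moving two derivatives past the factors $\rho$ and $n$ spawns cross terms of the form $(1+\tau)^{2}\rho_x(u-\omega)_x u_{xx}$ and $(1+\tau)^{2}n_{xx}(u-\omega)M$, which close only thanks to the sharp $(1+\tau)^{-1}$ decay of $\|\rho_x\|_\infty$, $\|n_x\|_\infty$ from \eqref{3.13} combined with $M_1^{2}\le C\epsilon_0^{2}$. Collecting all bounds and using $\epsilon\mathcal{E}\le 1/16$ together with the smallness of $N_0$, $N_1$, $M_1$ and boundedness of $N_2$, the master estimate collapses to
$$M_2^{2}(t)\le C(\epsilon_0^{2}+\mathcal{E}_0^{2})+C\big(N_0^{1/2}N_1+N_1 N_2^{1/2}+M_1^{2}\big)M_2^{2}(t),$$
from which absorption of the small factor yields $M_2^{2}(t)\le C(\epsilon_0^{2}+\mathcal{E}_0^{2})$, proving both assertions of Proposition \ref{thm 3.2}.
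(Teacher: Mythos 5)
Your overall strategy---a $(1+\tau)^{2}$-weighted second-derivative energy estimate built by differentiating the symmetric system twice, testing against $\rho v_{xx}$, $\rho u_{xx}$, $n_{xx}/n$, and $-n\omega_{xxx}$, then absorbing the nonlinear remainders using the $L^\infty$ catalog \eqref{3.13}, \eqref{3.025} and the smallness of $N_{0}$, $N_{1}$---is the right one, and indeed what the paper indicates (``similar to Proposition \ref{thm 3.1}''). The drag-force cancellation you describe does produce $\rho n(u-\omega)_{xx}^{2}$: the main term $-\rho n u_{xx}(u-\omega)_{xx}$ from $\partial_x^{2}\eqref{2.2}_{2}\cdot \rho u_{xx}$ combines, after one integration by parts of $-n\rho\omega_{xxx}(u-\omega)_{x}$, with $+\rho n\omega_{xx}(u-\omega)_{xx}$ to give $-\rho n(u-\omega)_{xx}^{2}$ on the right, i.e.\ a good dissipation on the left.

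However, there is a genuine gap in your treatment of the weight-derivative commutator. You write it as $C\int_{0}^{t}(1+\tau)\|(\rho_{xx},u_{xx},n_{xx},\omega_{xx})\|^{2}\,d\tau$ and assert it is ``directly controlled by Proposition \ref{thm 3.1}.'' That proposition gives only $\int_{0}^{t}(1+\tau)\big(\|\omega_{xx}\|^{2}+\|(u-\omega)_{x}\|^{2}\big)\,d\tau\le C\epsilon_{0}^{2}$ and the pointwise bound $\|(\rho_{x},u_{x},n_{x},\omega_{x})\|\le C\epsilon_{0}(1+\tau)^{-1/2}$; it says nothing about $\int_{0}^{t}(1+\tau)\|(\rho_{xx},u_{xx},n_{xx})\|^{2}\,d\tau$. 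Theorem \ref{thm 1.2} is unweighted, and your multiplier combination produces dissipation only in $\omega_{xxx}$ and $(u-\omega)_{xx}$, not in $\rho_{xx}$, $u_{xx}$, $n_{xx}$, so that portion of the commutator cannot be absorbed from the left either. Interpolation $\|f_{xx}\|^{2}\le\|f_{x}\|\,\|f_{xxx}\|$ with Proposition \ref{thm 3.1} and the unweighted Proposition \ref{3 order} only gives $\|(\rho_{xx},u_{xx},n_{xx})\|^{2}\lesssim\epsilon_{0}(\epsilon_{0}+\mathcal{E}_{0})(1+\tau)^{-1/2}$, so the commutator grows like $(1+t)^{3/2}$ and the estimate does not close. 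What is missing is an intermediate step at weight $(1+\tau)^{1}$ whose dissipation controls $\|(\rho_{xx},u_{xx},n_{xx})\|^{2}$: attaching the weight $(1+\tau)$ to the Lemma \ref{thm 2.5}--\ref{thm 2.6} multiplier combinations (which generate $n_{xx}^{2}$, $u_{xx}^{2}$, $u_{x\tau}^{2}$ dissipation, hence $v_{xx}^{2}$ via \eqref{2.046}) produces a commutator that is now the \emph{unweighted} time integral of the second-derivative energy, controlled by Lemmas \ref{thm 2.5}--\ref{thm 2.6} and Theorem \ref{thm 1.2}. With $\int_{0}^{t}(1+\tau)\|(\rho_{xx},u_{xx},n_{xx},u_{x\tau})\|^{2}\,d\tau\lesssim\epsilon_{0}^{2}+\mathcal{E}_{0}^{2}$ in hand, together with Proposition \ref{thm 3.1}'s weighted bound on $\omega_{xx}$, the $(1+\tau)^{2}$-weighted step then closes as you outlined.
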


\begin{pro}\label{3 order}
Let $ (\rho_*, 0, n_*, 0) $ be the constant equilibrium state of \eqref{1.1} and  $ (\rho_0-\rho_*,u_0, n_0 - n_*, \omega_0) \in H^{4}(\mathbb{R}) $ satisfies the assumptions in Theorem \ref{thm 1.3}. If further $N_0$, $ N_1$ is small and $ N_2$ is bounded, the solution of \eqref{1.1}-\eqref{1.2}  {\rm{(}}or \eqref{2.2}-\eqref{2.3} {\rm{)}} given in Theorem \ref{thm 1.2} has the following estimates:
\begin{equation}
\begin{aligned}
&  \|\partial^{3}_x(\rho, u, n, \omega)(t,\cdot) \|^{2} + \|\partial^{2}_x(\rho_t, u_t)(t,\cdot)\|^{2}   \\
+ & \int_{0}^{t}\left(\|\big(\partial^{3}_x(\rho, u, n), \partial^{2}_xu_{\tau}\big)(\tau,\cdot)\|^{2} + \|\partial^{4}_x\omega(\tau,\cdot)\|^{2}\right) d\tau    
\leq C (\epsilon_0^{2} +{\mathcal{E}^2_0}) .
\end{aligned}
\end{equation}
\end{pro}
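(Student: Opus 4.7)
The plan is to mimic the structure of Lemmas \ref{thm 2.5}--\ref{thm 2.6} one derivative higher, combining an energy estimate on the third spatial derivatives for the $(n,\omega)$-block with the analogous estimate for the symmetric Euler pair $(v,u)$, and then adding a time-derivative estimate to recover $\|\partial_x^2(\rho_t,u_t)\|$. Throughout, the dissipation for $\omega$ comes from the parabolic term and that for $u$ from the drag force $n(\omega-u)$; all other derivatives are then closed through the equations and Gagliardo--Nirenberg (Lemma \ref{lma 1.2}) together with the a priori decay bounds already established in Propositions \ref{thm 3.1}, \ref{thm 3.2}.

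First I would handle the $(n,\omega)$-block. Applying $\partial_x^3$ to $\eqref{1.1}_3$ and testing against $\tfrac{1}{n}\partial_x^3 n$, and applying $\partial_x^2$ to $\eqref{2.012}/n$ and testing against $-n\partial_x^4\omega$, the sum reproduces the cancellation structure of Lemma \ref{thm 2.5} at one order higher: one obtains $(2n)^{-1}(\partial_x^3 n)_t^2$ and $\tfrac12 n(\partial_x^3 \omega)_t^2$ on the left, $\int n(\partial_x^4\omega)^2\,dx$ as the dominant dissipation, and a cross term $\partial_x^3 \omega\,\partial_x^3 n_t$ that, after integration by parts in $t$, yields (together with an auxiliary bound from a direct test against $\partial_x^3 n$) the missing dissipation $\int\|\partial_x^3 n\|^2\,d\tau$. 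All the nonlinear remainders (products of derivatives of $n,\omega,\rho,u-\omega$) are cubic or higher and are absorbed using $\|\partial_x^j f\|_\infty\leq C\|\partial_x^j f\|^{1/2}\|\partial_x^{j+1}f\|^{1/2}$ plus the smallness of $N_0,N_1$ and boundedness of $N_2$, so that all non-dissipative pieces are majorized by $C(\delta\,\text{dissipation} + \|\partial_x^{\leq 2}(\text{lower order})\|^2)$, which Proposition \ref{thm 3.2} controls by $C(\epsilon_0^2+\mathcal{E}_0^2)$.

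Next I would differentiate $\eqref{2.2}_1,\eqref{2.2}_2$ three times in $x$, multiply by $v_{xxx},u_{xxx}$, add and integrate. The drag term $n(\omega-u)$ produces the key dissipation $\int n\,u_{xxx}^2\,dx$. Burgers-type nonlinearities $u_x u_{xxx}^2$, $u_x v_{xxx}^2$, $v_x u_{xxx}v_{xxx}$ are treated exactly as in Lemma \ref{thm 2.6} via Lemma \ref{lma 1.2}, using $\epsilon\mathcal{E}\leq 1/16$; drag-derived terms like $\partial_x^3 n\,(\omega-u)\,u_{xxx}$ and $\partial_x^2 n\,(u-\omega)_x u_{xxx}$ are split by Young's inequality, their $\|u-\omega\|_\infty$ factors estimated through the decay \eqref{3.025}, and the surviving $\int(1+\tau)^2\|(u-\omega)_{xx}\|^2\,d\tau$ and $\int(1+\tau)^2\|\omega_{xxx}\|^2\,d\tau$ integrals are absorbed by Proposition \ref{thm 3.2}. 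The dissipation $\int\|v_{xxx}\|^2\,d\tau$ is recovered by reading $v_{xxx}$ off $\eqref{2.2}_2$ as in \eqref{2.046}, producing $|v_{xxx}|\lesssim |u_{xxt}|+|u_{xxx}|+$ lower order coupling terms. Applying $\partial_x^2\partial_t$ to $\eqref{2.2}_1,\eqref{2.2}_2$ and testing against $v_{xxt},u_{xxt}$ closes the $\|\partial_x^2(v_t,u_t)\|^2+\int\|\partial_x^2 u_\tau\|^2\,d\tau$ estimate in the same manner; the initial value $\int(v_{xxt}^2+u_{xxt}^2)\,dx|_{t=0}$ is bounded by $C\|(\rho_0-\rho_*,u_0,n_0-n_*,\omega_0)\|_{H^3}^2\leq C(\epsilon_0^2+\mathcal{E}_0^2)$ using the equations.

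The main obstacle I anticipate is controlling the cross-phase drag remainders of the form $\partial_x^3\rho\cdot(u-\omega)\cdot\partial_x^3 u$, $\partial_x^3 n\cdot(\omega-u)\cdot\partial_x^3 u$, and mixed $\partial_x^k n\cdot\partial_x^{3-k}(u-\omega)\cdot\partial_x^3 u$, which are not signed and cannot be absorbed by the $\|u_{xxx}\|^2$ dissipation alone. The resolution is to peel off the $L^\infty$ factor $(u-\omega)$ or its derivative using the sharp rate \eqref{3.025} and Proposition \ref{thm 3.1}, distribute the remaining two $L^2$ factors via Young's inequality with a small parameter $\delta$, and reduce to the already-controlled integrals $\int\|(u-\omega,\omega_{xx},\omega_{xxx})\|^2\,d\tau$; provided $\epsilon_0,\mathcal{E}_0$ are as in Theorem \ref{thm 1.3} and $\epsilon\mathcal{E}\leq 1/16$, this closes the estimate and yields the bound $C(\epsilon_0^2+\mathcal{E}_0^2)$.
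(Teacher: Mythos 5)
Your proposal is correct and takes essentially the same route the paper indicates: the paper explicitly omits the proof of this proposition, stating only that it is obtained by repeating the arguments of Lemmas \ref{thm 2.5} and \ref{thm 2.6} one spatial derivative higher, and your outline reproduces that structure accurately — the $(n,\omega)$-block cancellation with the cross term $\partial_x^3\omega\,\partial_x^3 n_t$ handled by integration by parts in $t$, the drag-induced dissipation $\int n\,u_{xxx}^2$ from the symmetrized Euler block, recovery of $\int\|v_{xxx}\|^2$ through $\eqref{2.2}_2$ as in \eqref{2.046}, the $\partial_x^2\partial_t$ estimate to close $\|\partial_x^2(\rho_t,u_t)\|$, and absorption of the cubic remainders using Gagliardo--Nirenberg together with the smallness of $N_0,N_1$, boundedness of $N_2$, and the already-established lower-order integrals. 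The only cosmetic point worth noting is that your invocation of Propositions \ref{thm 3.1}--\ref{thm 3.2} for absorbing the drag remainders is one viable route, but the unweighted integrals $\int\|(\omega_x,\omega_{xx},\omega_{xxx},u-\omega,n_x,n_{xx},u_x,u_{xx})\|^2\,d\tau$ from Theorem \ref{thm 1.2} already suffice here, exactly as in Lemmas \ref{thm 2.5}--\ref{thm 2.6}; the time-weighted versions only become necessary for the weighted analogues (Propositions \ref{thm 3.3}--\ref{thm 3.4}).
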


\begin{pro}\label{4 order}
Let $ (\rho_*, 0, n_*, 0) $ be the constant equilibrium state of \eqref{1.1} and  $ (\rho_0-\rho_*,u_0, n_0 - n_*, \omega_0) \in H^{4}(\mathbb{R}) $ satisfies the assumptions in Theorem \ref{thm 1.3}. If further $N_0$, $ N_1$ is small and $ N_2$ is bounded, the solution of \eqref{1.1}-\eqref{1.2}  {\rm{(}}or \eqref{2.2}-\eqref{2.3} {\rm{)}} given in Theorem \ref{thm 1.2} has the following estimates:
\begin{equation}
\begin{aligned}
& \|\partial^{4}_x(\rho, u, n, \omega)(t,\cdot)\|^{2} + \|\partial^{3}_x(\rho_t, u_t)(t,\cdot)\|^{2} \\+ & \int_{0}^{t}\left(\|\left(\partial^{4}_x(\rho, u, n), \partial^{3}_xu_{\tau}\right)(\tau,\cdot)\|^{2} + \|\partial^{5}_x\omega(\tau,\cdot)\|^{2}\right) d\tau    
\leq C (\epsilon_0^{2} +{\mathcal{E}^2_0}) .
\end{aligned}
\end{equation}
\end{pro}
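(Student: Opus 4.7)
\textbf{Proof proposal for Proposition \ref{4 order}.} The plan is to imitate, at the fourth-derivative level, the unweighted energy arguments of Lemma \ref{thm 2.5}, Lemma \ref{thm 2.6} and Proposition \ref{3 order}, working in the symmetric variables $(v,u,n,\omega)$ from \eqref{2.2} and closing the estimates with the uniform-in-time bounds of Theorem \ref{thm 1.2} together with the third-order bounds of Proposition \ref{3 order}. The target is to control $\|\partial_x^4(v,u,n,\omega)\|^2 + \|\partial_x^3(\rho_t,u_t)\|^2$ in $L^\infty_t$ and the dissipation integrals $\int_0^t \bigl(\|\partial_x^4(\rho,u,n)\|^2 + \|\partial_x^3 u_\tau\|^2 + \|\partial_x^5 \omega\|^2\bigr) d\tau$.

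First, I would treat the hyperbolic block. Apply $\partial_x^4$ to $\eqref{2.2}_1$ and $\eqref{2.2}_2$, multiply by $\partial_x^4 v$ and $\partial_x^4 u$ respectively, sum and integrate over $\mathbb{R}\times [0,t]$. After the usual cancellation of the flux contribution $\sigma_*(\partial_x^4 v)(\partial_x^4 u)_x$, the linear part produces the dissipative term $\int_0^t\!\!\int n |\partial_x^4 u|^2 \,dxd\tau$, while $\|\partial_x^4 v\|$ (equivalent to $\|\partial_x^4 \rho\|$) is recovered pointwise-in-time and in $L^2_t$ from the differentiated momentum equation, as in the bound \eqref{2.046} now pushed one derivative higher. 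The nonlinear terms are expanded by Leibniz's rule; each product is estimated using \eqref{GN}--\eqref{Sob} combined with the smallness of $\epsilon$, the assumption $\epsilon\mathcal{E}\le \tfrac{1}{16}$, and the bounds on $\|\partial_x^k(\rho,u,n,\omega)\|$ for $k\le 3$ from Proposition \ref{3 order}, so that every contribution is absorbed either into $\delta\int_0^t\|\partial_x^4 u\|^2 d\tau$ or into already-integrable quantities.

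Second, for the parabolic block $(n,\omega)$ I would repeat the coupled-multiplier trick of Lemma \ref{thm 2.5}. Apply $\partial_x^4$ to $\eqref{2.2}_3$ and multiply by $\tfrac{1}{n}\partial_x^4 n$; in parallel, divide \eqref{2.012} by $n$, apply $\partial_x^3$ and multiply by $-n\partial_x^4 \omega$. Adding the two identities, the top-order cross term $\tfrac{1}{n}(\partial_x^4 n)(\partial_x^5 \omega)$ cancels and one obtains simultaneous dissipation of $\tfrac{1}{n}|\partial_x^4 n|^2$ and $n|\partial_x^5 \omega|^2$. The time-boundary term $\int (\partial_x^3 \omega)(\partial_x^4 n)\, dx$ is harmless after Cauchy--Schwarz, $\delta$-Young and Proposition \ref{3 order}; all remaining nonlinearities are handled by the same interpolation-plus-smallness scheme as above. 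The time-derivative quantities $\partial_x^3\rho_t$ and $\partial_x^3 u_t$ are finally read off from $\eqref{2.2}_1$ and $\eqref{2.2}_2$ and reduced to already-controlled spatial derivatives, with $\partial_x^3 u_t$ acquiring its $L^2_t$-integrability from the dissipation on $\partial_x^4 u$ and on the coupling terms.

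The main obstacle I anticipate is the bookkeeping of the top-order nonlinearities, specifically those in which no factor is a priori $L^\infty$-small: typical troublemakers are $(\partial_x^2 n)(\partial_x^3\omega)(\partial_x^4\omega)$, $(\partial_x\omega)(\partial_x^3 n)(\partial_x^5 \omega)$, and the drag-force contributions $\partial_x^4[\rho n(u-\omega)]$ arising from the momentum equations. For each such product the highest-order factor must be paired with the dissipation via $\delta$-Young, the middle factor estimated by Gagliardo--Nirenberg against the third-derivative bounds of Proposition \ref{3 order} (e.g.\ $\|\partial_x^3 \omega\|_\infty \le C\|\partial_x^3\omega\|^{1/2}\|\partial_x^4\omega\|^{1/2}$), and the remaining factor put in $L^\infty_{t,x}$ via \eqref{Sob} and the smallness of $\epsilon$. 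Once all such splittings are carried out, the combined estimate closes after invoking $\epsilon\mathcal{E}\le \tfrac{1}{16}$ and Gronwall's inequality, yielding the stated bound $C(\epsilon_0^2 + \mathcal{E}_0^2)$.
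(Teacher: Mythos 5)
Your overall strategy --- reproduce the fourth-order analogue of the second-order a priori estimates in Lemmas \ref{thm 2.5} and \ref{thm 2.6}, working in the symmetric variables of \eqref{2.2} and closing with the bounds of Theorem \ref{thm 1.2} and Proposition \ref{3 order} --- is exactly what the paper intends; the paper itself omits the proof, referring only to those lemmas. However, two steps in your sketch would not execute as written.

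In the parabolic block the multiplier must be $-n\,\partial_x^5\omega$, not $-n\,\partial_x^4\omega$. With $-n\,\partial_x^4\omega$, the top term $-\partial_x^5\omega$ in $\partial_x^3(\eqref{2.012}/n)$ produces $n\,\partial_x^5\omega\,\partial_x^4\omega=\tfrac{n}{2}\big((\partial_x^4\omega)^2\big)_x$, which is a pure spatial derivative, not a sign-definite dissipation; nor does the energy $\tfrac{n}{2}\partial_t(\partial_x^4\omega)^2$ emerge. Likewise the cross term coming from $\partial_x^3(n_x/n)$ becomes $-\partial_x^4 n\,\partial_x^4\omega$, which does not cancel the $\partial_x^4 n\,\partial_x^5\omega$ produced by $\partial_x^4\eqref{2.2}_3\times\partial_x^4 n/n$. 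Furthermore, one combined identity cannot deliver both dissipations at once: mirroring Lemma \ref{thm 2.5}, the $\frac{1}{n}|\partial_x^4 n|^2$ dissipation needs the separate combination $\partial_x^4\eqref{2.2}_3\times\partial_x^4 n/n$ plus $\partial_x^3(\eqref{2.012}/n)\times\partial_x^4 n$ (the analogue of (2.88)+(2.89)), while the $n|\partial_x^5\omega|^2$ dissipation and $\tfrac{n}{2}(\partial_x^4\omega)^2$ energy come from the $-n\,\partial_x^5\omega$ multiplier (the analogue of (2.88)+(2.112)).

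More seriously, the claim that $\partial_x^3 u_t$ ``acquires its $L^2_t$-integrability from the dissipation on $\partial_x^4 u$ and on the coupling terms'' by reading off from $\eqref{2.2}_2$ is circular: $\partial_x^3 u_t=-\sigma_*\partial_x^4 v+(\text{lower order})$, so bounding $\int\|\partial_x^3 u_\tau\|^2\,d\tau$ requires $\int\|\partial_x^4 v\|^2\,d\tau$, while the $\partial_x^4$-analogue of \eqref{2.046} expresses $\partial_x^4 v$ back in terms of $\partial_x^3 u_t$, $\partial_x^4 u$, and drag terms. You must break this circle with a dedicated time-derivative energy identity, exactly as in Lemmas \ref{thm 2.4} and \ref{thm 2.6}: apply $\partial_x^3\partial_t$ to $\eqref{2.2}_1$, $\eqref{2.2}_2$, pair with $\partial_x^3 v_t$, $\partial_x^3 u_t$, and integrate; the differentiated drag force $\partial_t[n(\omega-u)]=-n u_t+\cdots$ then yields the genuine dissipation $\int n(\partial_x^3 u_\tau)^2\,d\tau$, after which $\int\|\partial_x^4 v\|^2\,d\tau$ (hence $\int\|\partial_x^4\rho\|^2\,d\tau$) is recovered from the momentum equation. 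This step is also indispensable for controlling the cubic terms containing $(\partial_x^4 v)^2$ on the right-hand side of the hyperbolic estimate, which are bounded precisely by expanding $\partial_x^4 v$ through $\eqref{2.2}_2$.
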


\begin{pro}\label{thm 3.3}
Let $ (\rho_*, 0, n_*, 0) $ be the constant equilibrium state of \eqref{1.1} and  $ (\rho_0-\rho_*,u_0, n_0 - n_*, \omega_0) \in H^{4}(\mathbb{R}) $ satisfies the assumptions in Theorem \ref{thm 1.3}. If further $N_0$, $ N_1$ is small and $ N_2$ is bounded, the solution of \eqref{1.1}-\eqref{1.2}  {\rm{(}}or \eqref{2.2}-\eqref{2.3} {\rm{)}} given in Theorem \ref{thm 1.2} has the following estimates:
\begin{equation}
\begin{aligned}
&  \|\partial^{3}_x(\rho, u, n, \omega)(t,\cdot)\|   \leq C(\epsilon_0 + {\mathcal{E}_0})(1+t)^{-\frac{3}{2}}  \\
& \int_{0}^{t} (1+\tau)^{3}\left(\|\partial^{4}_x\omega(\tau,\cdot)\|^{2} + \|\partial^{3}_x(u-\omega)(\tau,\cdot)\|^{2}\right) d\tau    \\
\leq&C (\epsilon_0^{2} +{\mathcal{E}^2_0}) .
\end{aligned}
\end{equation}
\end{pro}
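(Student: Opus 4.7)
The plan is to mirror the time-weighted energy estimate of Propositions \ref{thm 3.1} and \ref{thm 3.2} one derivative higher, with weight $(1+\tau)^3$. Introduce the auxiliary functional
\begin{equation*}
M_3^2(t) := \sup_{0\le \tau \le t}(1+\tau)^3\|\partial_x^3(\rho,u,n,\omega)(\tau,\cdot)\|^2 + \int_0^t (1+\tau)^3\left(\|\partial_x^4 \omega\|^2+\|\partial_x^3(u-\omega)\|^2\right) d\tau,
\end{equation*}
and aim to show $M_3^2(t) \le C(\epsilon_0^2+\mathcal{E}_0^2)$ under the smallness of $N_0, N_1, M_1, M_2$ and the boundedness of $N_2$.

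First I would apply $\partial_x^3$ to each of $\eqref{2.2}_1$ and $\eqref{2.2}_2$ and pair them with $\rho\partial_x^3 v$ and $\rho\partial_x^3 u$ respectively; then, as in Lemma \ref{thm 2.5}, divide $\eqref{2.2}_4$ by $n$, apply $\partial_x^3$, and pair with $n\partial_x^3\omega$, while applying $\partial_x^3$ to $\eqref{2.2}_3$ and pairing the result with $\tfrac{1}{n}\partial_x^3 n$. Summing yields an identity of the form
\begin{equation*}
\tfrac{1}{2}\tfrac{d}{dt}\int_{\mathbb R}\bigl[\rho(\partial_x^3 v)^2+\rho(\partial_x^3 u)^2+\tfrac{1}{n}(\partial_x^3 n)^2+n(\partial_x^3\omega)^2\bigr]\, dx + \int_{\mathbb R}\bigl[n(\partial_x^4\omega)^2+\rho n (\partial_x^3(u-\omega))^2\bigr]\, dx = \textstyle\sum_j \mathcal{N}_j,
\end{equation*}
where the $\mathcal{N}_j$ collect Leibniz commutators from the convective terms $(uv_x)_x$ and $uu_x$, the $\gamma$-law pressure nonlinearity, the density-dependent viscosity $(n\omega_x)_x$, and the drag force (the latter partially cancels between the $u$- and $\omega$-equations when written in momentum variables, but Leibniz residues survive at the third-derivative level). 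Multiplying this identity by $(1+\tau)^3$ and integrating in $\tau\in[0,t]$, the weight derivative produces $3(1+\tau)^2\|\partial_x^3(\rho,u,n,\omega)\|^2$, whose time integral is controlled by the unweighted $L^2_t H^3_x$ estimate furnished by Proposition \ref{3 order}.

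The nonlinear terms $\mathcal{N}_j$ are then treated by a standard $L^\infty\times L^2\times L^2$ split. For the $L^\infty$ factors I would use the 1D Gagliardo-Nirenberg inequality $\|\partial_x^k f\|_\infty\le C\|\partial_x^k f\|^{1/2}\|\partial_x^{k+1} f\|^{1/2}$ combined with Propositions \ref{thm 3.1}, \ref{thm 3.2} and the uniform $H^4$ bound of Proposition \ref{4 order}, which yields decay of order $(1+\tau)^{-(k+3/4)}$ for $k=0,1,2$. Terms that contain $\partial_x^4\omega$ or $\partial_x^3(u-\omega)$ are absorbed into the dissipation on the left by Young's inequality with a small constant $\delta$. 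The drag contribution $\partial_x^3[\rho n(u-\omega)]$ is expanded by Leibniz: the top-order piece $\rho n\,\partial_x^3(u-\omega)$ is directly dissipative, while the lower-order pieces carry sufficient polynomial decay from the optimal bounds already established on the first and second derivatives.

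The main obstacle is the pair of borderline nonlinearities from $\partial_x^3[(n\omega_x)_x]$ and from the cubic triple-product terms $\partial_x^3[uu_x]$ and $\partial_x^3[vv_x]$. After partial absorption of $\int_0^t(1+\tau)^3\|\partial_x^4\omega\|^2 d\tau$ into the dissipation, the residues take the form $\int_0^t(1+\tau)^3\|\partial_x^i n\|^2\|\partial_x^j\omega\|_\infty^2 d\tau$ with $i+j\le 4$, and these only close via an interpolation such as $\|\partial_x^2\omega\|_\infty\le C\|\partial_x^2\omega\|^{1/2}\|\partial_x^3\omega\|^{1/2}$ played against the optimal decays from Propositions \ref{thm 3.1} and \ref{thm 3.2} together with the $L^\infty_t H^4_x$ bound of Proposition \ref{4 order}; the exponent arithmetic is tight and requires a H\"older argument in the spirit of the derivation of \eqref{3.025}. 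Combining these estimates, choosing the smallness parameters and the Young constants appropriately, and invoking the initial-data bound $\|\partial_x^3(\rho_0,u_0,n_0,\omega_0)\|\le C\mathcal{E}_0$, one arrives at $M_3^2(t)\le C(\epsilon_0^2+\mathcal{E}_0^2)$, which yields both the pointwise-in-time decay $(1+t)^{-3/2}$ for the third derivatives and the weighted integral estimate claimed in the proposition.
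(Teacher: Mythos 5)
Your overall strategy (a time-weighted energy identity for the third-order derivatives with weight $(1+\tau)^3$, using the a priori decay assumptions $N_0,N_1,N_2$ and the auxiliary Propositions \ref{3 order}, \ref{4 order}) matches the approach the paper indicates, since the authors only say the proof is ``similar to Proposition \ref{thm 3.1}'' and omit the details. The energy pairing and the idea to absorb the top-order nonlinear residues into the dissipation are also in the right spirit.

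There is, however, a genuine gap in your handling of the weight-derivative term. You assert that the time integral of $3(1+\tau)^2\|\partial_x^3(\rho,u,n,\omega)\|^2$ is ``controlled by the unweighted $L^2_t H^3_x$ estimate furnished by Proposition \ref{3 order}.'' This is not correct: Proposition \ref{3 order} only gives $\int_0^t\|\partial_x^3(\rho,u,n)\|^2 d\tau \le C(\epsilon_0^2+\mathcal{E}_0^2)$, and a bounded \emph{unweighted} integral never bounds the same integral multiplied by the unbounded factor $(1+\tau)^2$. (The analogous step in Proposition \ref{thm 3.1} works precisely because there the weight is $(1+\tau)^1$, so its $\tau$-derivative is $(1+\tau)^0$ and the resulting integral really is the unweighted $L^2_t$ energy.) With weight $(1+\tau)^3$ the weight derivative carries $(1+\tau)^2$, and closing it requires a different mechanism, e.g.: split $\partial_x^3 u=\partial_x^3(u-\omega)+\partial_x^3\omega$ and $\partial_x^3\omega$ likewise, use the already-available weighted dissipation $\int_0^t(1+\tau)^2\|\omega_{xxx}\|^2 d\tau\le C$ from Proposition \ref{thm 3.2} for the $\omega$-piece, absorb the $(u-\omega)$-piece into the current estimate's weighted dissipation via $3(1+\tau)^2=\tfrac{3}{1+\tau}(1+\tau)^3$ for $\tau\ge T_0$ (handling $\tau\le T_0$ with the unweighted estimate of Proposition \ref{3 order}), and recover $\partial_x^3\rho$, $\partial_x^3 n$ from the momentum equations or the effective velocity $\omega+(\ln n)_x$ so as to hook into the available dissipations. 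As written, your argument would not close. A secondary issue: the interpolation exponents you quote for the $L^\infty$ factors, $(1+\tau)^{-(k+3/4)}$, do not follow from the $L^2$ decays $\|\partial_x^k U\|\lesssim N_k(1+\tau)^{-1/4-k/2}$ and the 1D Gagliardo--Nirenberg inequality, which give $\|\partial_x^k U\|_\infty\lesssim (1+\tau)^{-1/2-k/2}$; you should recompute these before feeding them into the nonlinear estimates.
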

\begin{pro}\label{thm 3.4}
Let $ (\rho_*, 0, n_*, 0) $ be the constant equilibrium state of \eqref{1.1} and  $ (\rho_0-\rho_*,u_0, n_0 - n_*, \omega_0) \in H^{4}(\mathbb{R}) $ satisfies the assumptions in Theorem \ref{thm 1.3}. If further $N_0$, $ N_1$ is small and $ N_2$ is bounded, the solution of \eqref{1.1}-\eqref{1.2}  {\rm{(}}or \eqref{2.2}-\eqref{2.3} {\rm{)}} given in Theorem \ref{thm 1.2} has the following estimates:
\begin{equation}
\begin{aligned}
&  \|\partial^{4}_x(\rho, u, n, \omega)(t,\cdot)\|   \leq C(\epsilon_0 + {\mathcal{E}_0})(1+t)^{-2}  \\
& \int_{0}^{t} (1+\tau)^{4}\left(\|\partial^{5}_x\omega(\tau,\cdot)\|^{2} + \|\partial^{4}_x(u-\omega)(\tau,\cdot)\|^{2}\right) d\tau    \\
\leq&C (\epsilon_0^{2} +{\mathcal{E}^2_0}).
\end{aligned}
\end{equation}
\end{pro}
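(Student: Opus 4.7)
The plan is to extend the weighted energy method of Propositions~\ref{thm 3.1}--\ref{thm 3.3} to the fourth-order derivatives. I would introduce
\begin{equation*}
M_4^{2}(t) := \sup_{0\leq \tau\leq t}(1+\tau)^{4}\|\partial_x^{4}(\rho,u,n,\omega)(\tau,\cdot)\|^{2} + \int_{0}^{t}(1+\tau)^{4}\bigl(\|\partial_x^{5}\omega\|^{2} + \|\partial_x^{4}(u-\omega)\|^{2}\bigr) d\tau,
\end{equation*}
and aim to prove $M_4^{2}(t)\leq C(\epsilon_0^{2}+\mathcal{E}_0^{2})$ under the smallness of $N_0,N_1$ and boundedness of $N_2$. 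As in Proposition~\ref{thm 3.1}, the first preparatory step is to record pointwise bounds on low-order derivatives by combining the Gagliardo--Nirenberg inequality \eqref{GN}--\eqref{Sob} with the non-optimal time decay rates produced by Propositions~\ref{thm 3.2}--\ref{4 order}; these provide the $L^{\infty}$ estimates needed to control nonlinear terms of the form $\partial_x^{i}(\cdot)\partial_x^{j}(\cdot)$ with $i+j\leq 4$.

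The core step is to apply $\partial_x^{4}$ to the symmetric system \eqref{2.2} and take the $L^{2}$ inner product of the resulting equations with $\rho\,\partial_x^{4} v,\ \rho\,\partial_x^{4} u,\ \frac{1}{n}\partial_x^{4} n,\ n\,\partial_x^{4} \omega$ respectively. Summing and integrating by parts in $x$ yields a basic identity of the form
\begin{equation*}
\frac{1}{2}\frac{d}{dt}\int_{\mathbb{R}}\Bigl(\rho (\partial_x^{4} v)^{2}+\rho (\partial_x^{4} u)^{2}+\tfrac{1}{n}(\partial_x^{4} n)^{2}+n(\partial_x^{4}\omega)^{2}\Bigr)dx + \int_{\mathbb{R}}\Bigl(n(\partial_x^{5}\omega)^{2} + \rho n(\partial_x^{4}(u-\omega))^{2}\Bigr)dx = \mathcal{R},
\end{equation*}
where the drag-force terms cancel pairwise thanks to the momentum-variable structure emphasized in the introduction, and $\mathcal{R}$ collects all commutator and nonlinear contributions. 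Multiplying by the weight $(1+\tau)^{4}$ and integrating over $[0,t]$, the derivative of the weight generates an extra term $4\int_{0}^{t}(1+\tau)^{3}\|\partial_x^{4}(\rho,u,n,\omega)\|^{2}d\tau$, which is absorbed directly by Proposition~\ref{thm 3.3}. Each term in $\mathcal{R}$ is then split by Leibniz so that the highest-order factor lives in $L^{2}$ (absorbed either into $M_4^{2}$ or into the left-hand dissipation with a small constant $\delta$), while the remaining factors are bounded in $L^{\infty}$ via \eqref{Sob} and the decay rates of Propositions~\ref{thm 3.1}--\ref{thm 3.3}, \ref{3 order}, \ref{4 order}; the weight exponent $4$ is chosen precisely so that every integrand decays faster than $(1+\tau)^{-1}$ and is time-integrable.

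The main obstacle will be the top-order terms generated by the density-dependent viscosity $(n\omega_x)_x$. Four differentiations produce pairings $\partial_x^{i}n\cdot\partial_x^{6-i}\omega$ for $i=1,\dots,4$; the critical case $\partial_x^{4}n\cdot\partial_x^{2}\omega$ cannot borrow decay from $\partial_x^{5}\omega$, and must be handled by placing $\partial_x^{2}\omega$ in $L^{\infty}$ using \eqref{Sob} combined with Propositions~\ref{thm 3.2}--\ref{thm 3.3}, after which the remaining $(1+\tau)^{4}\|\partial_x^{4}n\|^{2}$ contribution is absorbed into $M_4^{2}$ thanks to the slower but sufficient decay already in hand. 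A parallel difficulty occurs in the top-order drag contribution $\partial_x^{4}[\rho n(u-\omega)]$: the good term $\rho n(\partial_x^{4}(u-\omega))^{2}$ is retained on the left, while the lower-order commutators are disposed of using the improved decay rate of $u-\omega$ coming from Proposition~\ref{thm 3.3} and Young's inequality with small $\delta$. Once every term in $\mathcal{R}$ is absorbed, a closure argument identical in spirit to \eqref{3.37}, based on the smallness of $N_0,N_1$ (and $M_4$ itself via standard continuity), delivers the claimed bound on $M_4^{2}(t)$, from which both conclusions of the proposition follow.
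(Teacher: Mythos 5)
Your overall framework is right---a weighted energy estimate with weight $(1+\tau)^4$ applied to $\partial_x^4$ of the symmetric system \eqref{2.2}, with the drag-force terms cancelling thanks to the chosen multipliers, and the non-optimal decay rates of Propositions~\ref{thm 3.1}--\ref{4 order} feeding into the $L^\infty$ bounds. This is indeed the approach the paper takes; the authors omit the argument, referring back to Lemmas~\ref{thm 2.5}--\ref{thm 2.6} and Propositions~\ref{thm 3.1}--\ref{thm 3.2}.

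However, there is a genuine gap in your treatment of the weight-derivative term. You claim that $4\int_0^t (1+\tau)^3\|\partial_x^4(\rho,u,n,\omega)\|^2\,d\tau$ is ``absorbed directly by Proposition~\ref{thm 3.3}''. That proposition controls only $\int_0^t(1+\tau)^3\bigl(\|\partial_x^4\omega\|^2+\|\partial_x^3(u-\omega)\|^2\bigr)d\tau$; it says nothing about the weighted integrals of $\partial_x^4\rho$, $\partial_x^4 u$ or $\partial_x^4 n$, and the unweighted bound of Proposition~\ref{4 order} cannot be promoted to the weight $(1+\tau)^3$ without additional decay information. To close, you need the \emph{model-specific} dissipative structures already used in Section~\ref{section 2}: the entropy/$\ln n$ trick (as in Lemma~\ref{thm 2.5}), which produces a damped identity $\partial_t\bigl(\tfrac{1}{2n}(\partial_x^4 n)^2\bigr)+\tfrac{1}{n}(\partial_x^4 n)^2=\cdots$ whose dissipation dominates the corresponding energy, so the weighted factor $(1+\tau)^3$ can be absorbed for large $\tau$; the relaxation structure (as in Lemmas~\ref{thm 2.4}, \ref{thm 2.6}), which yields $\int n(\partial_x^4 u)^2$ on the left and lets the analogous $(1+\tau)^3\|\partial_x^4 u\|^2$ contribution be absorbed; and the reconstruction of $\partial_x^4 v$ through the momentum equation $\eqref{2.2}_2$ in the spirit of \eqref{2.046}, which trades it for $\partial_x^3 u_t$, $\partial_x^4 u$ and lower-order terms and hence requires the weighted time-derivative estimates from the analogue of Lemma~\ref{thm 2.6}. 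Your single Proposition~\ref{thm 3.1}-type identity alone, with multipliers $\rho\partial_x^4 v,\ \rho\partial_x^4 u,\ \tfrac{1}{n}\partial_x^4 n,\ n\partial_x^4\omega$, produces dissipation only on $\partial_x^5\omega$ and $\partial_x^4(u-\omega)$, which is insufficient to dominate the full weight-derivative term.

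A secondary issue: in counting the viscosity contributions you write pairings $\partial_x^i n\cdot\partial_x^{6-i}\omega$ for $i=1,\dots,4$, but $\partial_x^4\bigl((n\omega_x)_x\bigr)=\partial_x^5(n\omega_x)$ actually also produces a $\partial_x^5 n\cdot\omega_x$ contribution (and the divided equation produces $\partial_x^4(n_x/n)$, hence $\partial_x^5 n/n$ again). With only $H^4$ regularity on $n$ this term is not directly controlled; it must cancel against the corresponding $\partial_x^5 n$ term coming from the density equation multiplied by $\partial_x^4 n/n$, forming a perfect $x$-derivative. You should point out this cancellation explicitly, since it is precisely the reason the choice of multipliers is not arbitrary.
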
\

\section{Optimal time decay rates}\label{section 4}



In this section, we finish the nonlinear analysis to prove our main result, Theorem \ref{thm 1.3}, to obtain optimal time decay rates of the solution $(\rho, u, n, \omega)$ of  \eqref{1.1}-\eqref{1.2} to the constant equilibrium state $(\rho_*, 0, n_*, 0)$. We use the Duhamel's principle to express the solution of nonlinear system and use the results obtained in section \ref{section 3} to make a priori estimate.

Recall $N^{2}_m$ defined in \eqref{Nm},
\begin{equation}
	N_m^{2}(t) = \sup\limits_{\tau\in[0,t]}(1+ \tau)^{\frac{1}{2}+m}\|\partial^{m}_x(\rho-\rho_*, u, n-n_*, \omega)\|^{2},
\end{equation}
where $m=0, 1, 2$ and $ t \geq 0$. By a standard continuity argument, to prove \eqref{1.6} in Theorem \ref{thm 1.3}, we only need to prove the following proposition.

\begin{pro}\label{pro 4.1}
Under the hypotheses of Theorem \ref{thm 1.3}, if $N_0, N_1$ is bounded by a small positive constant, $N_2$ is bounded by a constant, which are independent of $T \geq 0$, then
$$ N_1 \leq CF_0$$ 
and 
$$ N_2 \leq C(F_0 + {\mathcal{E}_0}). $$
\end{pro}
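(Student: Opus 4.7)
The plan is to apply Duhamel's formula to the nonlinear system \eqref{3.1} written in the momentum variables $U := (\rho-\rho_*, m, n-n_*, M)^\top$,
\[
U(t) = e^{tA}U_0 + \int_0^t e^{(t-\tau)A} G(U(\tau))\,d\tau,
\]
and to estimate $\|\partial_x^k U(t)\|$ for $k=1,2$ via the spectral decay bounds \eqref{3.06}--\eqref{3.07} together with the non-optimal time-weighted estimates of Propositions~\ref{thm 3.1}--\ref{thm 3.4}. Showing $\|\partial_x U\| \lesssim F_0(1+t)^{-3/4}$ and $\|\partial_x^2 U\| \lesssim (F_0+\mathcal{E}_0)(1+t)^{-5/4}$ closes $N_1\leq CF_0$ and $N_2\leq C(F_0+\mathcal{E}_0)$ simultaneously.

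The crucial decomposition is $G = \partial_x F + H$, where $F$ gathers all conservation-form nonlinearities ($-mu$ and the pressure remainder $-\int_{\rho_*}^\rho[p'(s)-p'(\rho_*)]\,ds$ in the $m$-component, and $-M\omega - n_x\omega$ in the $M$-component), while $H = (0, g, 0, -g)^\top$ with $g := (\rho-\rho_*)M - (n-n_*)m$ collects the non-conserved drag-force contributions. Working in the momentum variables $(m, M)$ rather than the velocities is essential here: only in these variables do the two drag nonlinearities appear with opposite signs across the $m$ and $M$ equations, placing $H$ exactly in the special subspace \eqref{3.007} on which the enhanced bound \eqref{3.07} is available, gaining one extra half-power of $(1+t-\tau)^{-1/2}$ over the generic estimate \eqref{3.06} and matching the gain otherwise available only from the divergence form of $\partial_x F$.

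For the initial-data term, \eqref{3.06} yields $\|\partial_x^k e^{tA}U_0\| \leq CF_0(1+t)^{-1/4-k/2}$ for $k\leq 1$ and $\leq C(F_0+\mathcal{E}_0)(1+t)^{-5/4}$ for $k=2$, with $\mathcal{E}_0$ absorbing the exponentially decaying high-frequency correction. For the Duhamel integral, one pushes a derivative onto the semigroup on the $\partial_x F$ piece,
\[
\partial_x^k \int_0^t e^{(t-\tau)A}\partial_x F\,d\tau = \int_0^t \partial_x^{k+1} e^{(t-\tau)A} F\,d\tau,
\]
and applies \eqref{3.06} with exponent $k+1$; on the $H$ piece \eqref{3.07} is applied directly. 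Both then yield the common low-frequency propagator decay $(1+t-\tau)^{-3/4-k/2}$ and exponentially damped high-frequency remainders. The $L^1$ norms of $F$ and $H$ are quadratic and are estimated by Hölder, using the bootstrap $\|U\|\lesssim N_0(1+\tau)^{-1/4}$ for one factor and the non-optimal $\|\partial_x U\|\lesssim\epsilon_0(1+\tau)^{-1/2}$ of Proposition~\ref{thm 3.1} (via a direct derivative factor or Sobolev embedding of $\|U\|_{L^\infty}$) for the other, yielding $\|F(\tau)\|_{L^1}+\|H(\tau)\|_{L^1} \lesssim (F_0+\mathcal{E}_0)^2(1+\tau)^{-3/4}$; the $L^2$ norms of $\partial_x^{k+1}F, \partial_x^k H$ appearing in the high-frequency remainders are controlled by the uniform $H^4$ bounds of Theorem~\ref{thm 1.2} and Propositions~\ref{3 order}--\ref{4 order}. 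Splitting the resulting convolution at $\tau = t/2$ then produces the target decay $(1+t)^{-1/4-k/2}$, and taking the supremum in $t$ closes the bootstrap.

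The main obstacle is the sharp matching of decay exponents in the low-frequency convolution: a naive bound $\|F\|_{L^1}, \|H\|_{L^1}\leq \|U\|^2\lesssim(1+\tau)^{-1/2}$ would yield only $(1+t)^{-1/2}$ after convolution with the propagator $(1+t-\tau)^{-5/4}$, strictly slower than the target $(1+t)^{-3/4}$. The extra $(1+\tau)^{-1/4}$ of $L^1$-decay obtained by trading a second $L^2$ factor on $U$ for the non-optimal derivative bound of Proposition~\ref{thm 3.1} is what rescues the argument, and it is precisely to supply this extra decay on higher derivatives that the time-weighted energy estimates of Section~\ref{section 3} were developed. The twin use of \eqref{3.06} for $\partial_x F$ and \eqref{3.07} for $H$ (via its momentum-variable symmetric structure) is equally indispensable, since either piece alone would decay too slowly at the propagator level to close the bootstrap.
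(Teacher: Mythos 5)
Your overall architecture is exactly the paper's: Duhamel's formula for the momentum-variable system, the decomposition $G = \partial_x F + H$ with $H = (0,g,0,-g)^\top$ lying in the special subspace \eqref{3.007}, the twin use of \eqref{3.06} (with a derivative pushed onto the semigroup) for $\partial_x F$ and \eqref{3.07} for $H$, the split of the Duhamel integral at $\tau=t/2$, the exponential remainder controlled by the time-weighted estimates, and closing the bootstrap in $N_m$. That much is correct and is precisely what the paper does.

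However, there is a genuine gap in your treatment of the late-time piece $\tau\in[t/2,t]$. You claim $\|F(\tau)\|_{L^1}+\|H(\tau)\|_{L^1}\lesssim(F_0+\mathcal{E}_0)^2(1+\tau)^{-3/4}$ by H\"older, trading one $L^2$ factor of $U$ for the derivative bound of Proposition~\ref{thm 3.1}. But $F$ and $H$ contain the derivative-free quadratic terms $mu$, $M\omega$, $(\rho-\rho_*)^2$, $(\rho-\rho_*)M$, $(n-n_*)m$, for which the only $L^1$ bound available from the controlled quantities is $\|U\|^2\lesssim N_0^2(1+\tau)^{-1/2}$; no combination of $\|U\|$ with $\|\partial_x U\|$ improves this, because there is no free derivative to absorb and $\|U\|_{L^1}$ is not controlled. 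And even if $(1+\tau)^{-3/4}$ were somehow available, keeping all $k+1$ derivatives on the propagator as you propose gives, on $[t/2,t]$ for $k=2$, $\int_{t/2}^t(1+t-\tau)^{-7/4}(1+\tau)^{-3/4}d\tau\lesssim(1+t)^{-3/4}$, which is strictly slower than the target $(1+t)^{-5/4}$. The paper's essential move on $[t/2,t]$ is the opposite derivative allocation: keep only \emph{one} derivative on the semigroup, so the propagator decays as $(1+t-\tau)^{-3/4}$ (respectively \eqref{3.07} with exponent $0$ on $H$), and put all $k$ derivatives on the nonlinearity. The quantities $\|\partial_x^k(mu)\|_{L^1}$ etc.\ then decay as $(1+\tau)^{-1/2-k/2}$ by combining $N_0$, $N_1$, $N_2$ with the time-weighted Propositions~\ref{thm 3.1}--\ref{thm 3.4}, and the convolution yields the target $(1+t)^{-1/4-k/2}$ for $k=0,1,2$. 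Without this reallocation of derivatives, the argument does not close.

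A secondary omission: you only run the Duhamel estimate for $k=1,2$, but the resulting inequalities (see the paper's \eqref{4.13} and \eqref{4.20}) inevitably carry $N_0^2$ and $N_0N_1$ on the right-hand side. The hypothesis only makes $N_0$ small, not $\lesssim F_0$, so you cannot conclude $N_1\leq CF_0$ without also estimating $k=0$ and adding the resulting inequality $N_0\leq C(F_0 + N_0^2 + N_1^2 + \cdots)$ to close $N_0+N_1\leq CF_0$. Finally, the remark that the high-frequency remainders $I_3$ are handled by the ``uniform $H^4$ bounds'' is imprecise: the exponential kernel $e^{-c(t-\tau)}$ does not by itself produce polynomial decay in $t$, so one needs the polynomial-in-$\tau$ decay of those $L^2$ norms supplied by Propositions~\ref{thm 3.1}--\ref{thm 3.4}, not merely their boundedness.
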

\begin{proof}
We can write \eqref{3.1} in the form
\begin{equation}\label{4.1}
U_t = AU + G(U),
\end{equation}
where 
\begin{equation}
\begin{aligned}
G  = &\left(\begin{array}{c}
0\\
-(mu)_x - \big(P'(\rho)-P'(\rho_*)\big)(\rho - \rho_*)_x \\
0 \\
-(M\omega)_x - (n_x\omega)_x
\end{array}\right)    \\
&+ 
 \left(\begin{array}{c}
0\\
(\rho -\rho_* )M - (n- n_* )m \\
0 \\
-(\rho -\rho_* )M + (n-n_*)m
\end{array}\right),
\end{aligned}
\end{equation}
or in the Fourier transform of \eqref{4.1}
\begin{equation}\label{4.2}
\hat{U}_t = A(\xi)\hat{U} + \hat{G}(U).
\end{equation}
The solution of \eqref{4.2} is 
\begin{equation}\label{4.4}
\begin{aligned} 
\hat{U} = e^{tA(\xi)}\hat{U}(0) + \int_{0}^{t}e^{(t-\tau)A}\hat{G}(U)(\xi,\tau) d\tau.
\end{aligned}
\end{equation}
By Planchel theorem, \eqref{4.4}, and triangle inequality, we have
\begin{equation}\label{4.5}
\begin{aligned}  
\|\partial_x^{k}U\|  = &\|(i\xi)^{k}\hat{U}\| \\
\leq&\|(i\xi)^{k}e^{tA(\xi)}\hat{U}(0)\| + \int_{0}^{t} \|(i\xi)^{k}e^{(t-\tau)A}\hat{G}(U)(\xi,\tau)\| d\tau,
\end{aligned}
\end{equation}
from \eqref{3.06} we have 
\begin{equation}\label{4.6}
\begin{aligned}  
\|(i\xi)^{k}e^{tA(\xi)}\hat{U}(0)\| \leq C(1+t)^{-\frac{1}{4} - \frac{k}{2}}\left(\|U(0)\|_{L^{1}} + \|\partial_x^{k}U(0)\|\right).
\end{aligned}
\end{equation}
Similarly, using \eqref{4.2}, \eqref{3.06} and \eqref{3.07}, we obtain
\begin{equation}\label{4.7}
\int_{0}^{t} \|(i\xi)^{k}e^{(t-\tau)A}\hat{G}(U)(\xi,\tau)\| d\tau \leq I_1 + I_2 + I_3,
\end{equation}
where 
\begin{equation}\label{4.8}
\begin{aligned}  
I_1 =&C\int_{0}^{\frac{t}{2}}(1+t-\tau)^{-\frac{1}{4}-\frac{k+1}{2}}\bigg(\|mu\|_{L^{1}} + \|(\rho - \rho_*)^{2}\|_{L^{1}} + \|M\omega\|_{L^{1}} + \|n_x\omega\|_{L^{1}}  \\
& + \|(\rho -\rho_*)M\|_{L^{1}} + \|(n -n_*)m\|_{L^{1}}\bigg) d\tau,  \\
 I_2 = &C\int_{\frac{t}{2}}^{t}(1+t-\tau)^{-\frac{3}{4}}\bigg[\|\partial_x^{k}(mu)\|_{L^{1}} + \|\partial_x^{k}(\rho - \rho_*)^{2}\|_{L^{1}} + \|\partial_x^{k}(M\omega)\|_{L^{1}}  \\
& + \|\partial_x^{k}(n_x\omega)\|_{L^{1}}  + \|\partial_x^{k}\big((\rho -\rho_*)M\big)\|_{L^{1}} + \|\partial_x^{k}\big((n -n_*)m\big)\|_{L^{1}}\bigg]d\tau, \\
 I_3 = &C\int_{0}^{t} e^{-c(t-\tau)}\bigg(\|\partial_x^{k+1}(mu)\| + \|\partial_x^{k+1}(\rho - \rho_*)^{2}\| + \|\partial_x^{k+1}(M\omega)\|  \\
& + \|\partial_x^{k+1}(n_x\omega)\| + \|\partial_x^{k}((\rho -\rho_*)M)\| + \|\partial_x^{k}((n -n_*)m)\|\bigg) d\tau.
\end{aligned}
\end{equation}
	\noindent$\bullet$ (Estimate on $I_1$): 
First, we have
\begin{equation*}
\begin{aligned}  
& \|mu\|_{L^{1}} \leq \|m\|\|u\| \leq CN_0^{2}(1+\tau)^{-\frac{1}{2}},  \\
& \|(\rho - \rho_*)^{2}\|_{L^{1}} \leq CN_0^{2}(1+\tau)^{-\frac{1}{2}},  \\
& \|M\omega\|_{L^{1}} \leq CN_0^{2}(1+\tau)^{-\frac{1}{2}},  \\
& \|(\rho -\rho_*)M\|_{L^{1}} \leq CN_0^{2}(1+\tau)^{-\frac{1}{2}},  \\
& \|(n -n_*)m\|_{L^{1}} \leq CN_0^{2}(1+\tau)^{-\frac{1}{2}},  \\
& \|n_x\omega\|_{L^{1}} \leq \|n_x\|\|\omega\| \leq CN_0N_1(1+\tau)^{-1}.
\end{aligned}
\end{equation*}
Substituting the above estimates into \eqref{4.8} yields 
\begin{equation}\label{4.9}
\begin{aligned} 
I_1 \leq &(N_0^{2} + N_0N_1)\int_{0}^{\frac{t}{2}} (1+t-\tau)^{-\frac{3}{4}- \frac{k}{2}}(1+\tau)^{-\frac{1}{2}} d\tau  \\
\leq& C(N_0^{2} + N_0N_1)(1+t)^{-\frac{1}{4}-\frac{k}{2}}.
\end{aligned}
\end{equation}
For the estimates of $I_2$ and $I_3$, we take $k=0,1,2$ separately and have the estimates as follows:

\noindent $\bullet$ (Estimates of $I_2, I_3$ for $k=0$):

Applying the above estimates gives
\begin{equation}\label{4.10}
I_2 \leq  C(N_0^{2} + N_0N_1)(1+t)^{-\frac{1}{4}}.
\end{equation}
For $I_3$, 
\begin{equation*}
\begin{aligned}  
\|(mu)_x\| \leq&\|m_xu\| + \|mu_x\|  \\
\leq&C\|u\|_{\infty}\|m_x\| + \|m\|_{\infty}\|u_x\|  \\
\leq&CN_0^{\frac{1}{2}}N_1^{\frac{3}{2}}(1+\tau)^{-\frac{5}{4}}.
\end{aligned}
\end{equation*}
Similarly,
\begin{equation*}
\begin{aligned}  
\|(\rho - \rho_*)^{2}_x\| 
\leq&CN_0^{\frac{1}{2}}N_1^{\frac{3}{2}}(1+\tau)^{-\frac{5}{4}}, \\
\|(M\omega)_x\| 
\leq& CN_0^{\frac{1}{2}}N_1^{\frac{3}{2}}(1+\tau)^{-\frac{5}{4}}, \\
\|(\rho -\rho_*)M\| \leq&\|\rho - \rho_*\|_{\infty}\|M\|  
\leq CN_0^{\frac{3}{2}}N_1^{\frac{1}{2}}(1+\tau)^{-\frac{3}{4}},  \\
\|(n -n_*)m\| \leq&\|n-n_*\|_{\infty}\|m\|  
\leq CN_0^{\frac{3}{2}}N_1^{\frac{1}{2}}(1+\tau)^{-\frac{3}{4}}.
\end{aligned}
\end{equation*}
By Lemma \ref{lma 1.2}, \eqref{3.13}, Proposition \ref{thm 3.3}, Lemma \ref{thm 2.2} and \ref{thm 2.5}, we have 
\begin{equation*}
\begin{aligned}  
\|(n_x\omega)_x\| \leq&\|n_{xx}\omega\| + \|n_x\omega_x\|  \\
\leq&\|\omega\|_{\infty}\|n_{xx}\| + \|n_x\|_{\infty}\|\omega_x\|  \\
\leq&C\|\omega\|^{\frac{1}{2}}\|\omega_x\|^{\frac{1}{2}}\|n_x\|^{\frac{1}{2}}\|n_{xxx}\|^{\frac{1}{2}} + C\|n_x\|^{\frac{1}{2}}\|n_{xx}\|^{\frac{1}{2}}\|\omega_x\|  \\
\leq&CN_0^{\frac{1}{2}}N_1^{\frac{1}{2}}\epsilon_0^{\frac{1}{2}}(\epsilon_0 + {\mathcal{E}_0})^{\frac{1}{2}}(1+\tau)^{-\frac{3}{2}} + CN_1\epsilon_0^{\frac{1}{2}}(\epsilon_0 + {\mathcal{E}_0})^{\frac{1}{2}}(1+\tau)^{-\frac{3}{2}}. 
\end{aligned}
\end{equation*}
Substituting the above estimates into \eqref{4.8} yields 
\begin{equation}\label{4.11}
\begin{aligned}  
I_3 \leq &C(N_0^{\frac{1}{2}}N_1^{\frac{3}{2}} + N_0^{\frac{1}{2}}N_1^{\frac{1}{2}}\epsilon_0^{\frac{1}{2}}(\epsilon_0 + {\mathcal{E}_0})^{\frac{1}{2}} + N_1\epsilon_0^{\frac{1}{2}}(\epsilon_0 + {\mathcal{E}_0})^{\frac{1}{2}}  \\
& + N_0^{\frac{3}{2}}N_1^{\frac{1}{2}})\int_{0}^{t} e^{-c(t-\tau)}(1+\tau)^{-\frac{3}{4}} d\tau   \\
\leq& C\left(N_0^{\frac{1}{2}}N_1^{\frac{3}{2}} + N_0^{\frac{1}{2}}N_1^{\frac{1}{2}}\epsilon_0^{\frac{1}{2}}(\epsilon_0 + {\mathcal{E}_0})^{\frac{1}{2}} + N_1\epsilon_0^{\frac{1}{2}}(\epsilon_0 + {\mathcal{E}_0})^{\frac{1}{2}} + N_0^{\frac{3}{2}}N_1^{\frac{1}{2}}\right)(1+t)^{-\frac{3}{4}}.   
\end{aligned}
\end{equation}
Combining \eqref{4.7}, \eqref{4.9}, \eqref{4.10} and \eqref{4.11}, we have 
\begin{equation}\label{4.12}
\begin{aligned}  
& \int_{0}^{t} \|e^{(t-\tau)A}\hat{G}(U)(\xi,\tau)\| d\tau \\
\leq&C\left(N_0^{2} + N_1^{2} + \epsilon_0^{\frac{1}{2}}(\epsilon_0 + {\mathcal{E}_0})^{\frac{1}{2}}N_0 + \epsilon_0^{\frac{1}{2}}(\epsilon_0 + {\mathcal{E}_0})^{\frac{1}{2}}N_1\right)(1+t)^{-\frac{1}{4}}. 
\end{aligned}
\end{equation}
Substituting \eqref{4.6} and \eqref{4.12} into \eqref{4.5} implies that 
\begin{equation*}
\begin{aligned}  
\|U\| \leq C\left(F_0 + N_0^{2} + N_1^{2} + \epsilon_0^{\frac{1}{2}}(\epsilon_0 + {\mathcal{E}_0})^{\frac{1}{2}}N_0 + \epsilon_0^{\frac{1}{2}}(\epsilon_0 + {\mathcal{E}_0})^{\frac{1}{2}}N_1\right)(1+t)^{-\frac{1}{4}}. 
\end{aligned}
\end{equation*}
Equivalently, 
\begin{equation*}
\begin{aligned}  
(1+t)^{\frac{1}{4}} \|U\| \leq C\left(F_0 + N_0^{2} + N_1^{2} + \epsilon_0^{\frac{1}{2}}(\epsilon_0 + {\mathcal{E}_0})^{\frac{1}{2}}N_0 + \epsilon_0^{\frac{1}{2}}(\epsilon_0 + {\mathcal{E}_0})^{\frac{1}{2}}N_1\right).
\end{aligned}
\end{equation*}
Taking supremum, we have 
\begin{equation}\label{4.13}
\begin{aligned}  
N_0 \leq C\left(F_0 + N_0^{2} + N_1^{2}\right) + C\epsilon_0^{\frac{1}{2}}(\epsilon_0 + {\mathcal{E}_0})^{\frac{1}{2}}N_0 + C\epsilon_0^{\frac{1}{2}}(\epsilon_0 + {\mathcal{E}_0})^{\frac{1}{2}}N_1.
\end{aligned}
\end{equation}
\noindent $\bullet$ (Estimates of $I_2, I_3$ for $k=1$):
\begin{equation}\label{4.14}
\begin{aligned}  
 I_2 = &C\int_{\frac{t}{2}}^{t}(1+t-\tau)^{-\frac{3}{4}}\big(\|(mu)_x\|_{L^{1}} + \|[(\rho - \rho_*)^{2}]_x\|_{L^{1}} + \|(M\omega)_x\|_{L^{1}}  \\
& + \|(n_x\omega)_x\|_{L^{1}}  + \|((\rho -\rho_*)M)_x\|_{L^{1}} + \|((n -n_*)m)_x\|_{L^{1}}\big) d\tau.  
\end{aligned}
\end{equation}
Using Cauchy-Schwarz inequality and \eqref{3.13} gives
\begin{equation*}
\begin{aligned} 
& \|(mu)_x\|_{L^{1}} \leq  \|m_xu\|_{L^{1}} + \|mu_x\|_{L^{1}}  \\
\leq& \|m_x\|\|u\| + \|m\|\|u_x\|  \leq CN_0N_1(1+\tau)^{-1}.
\end{aligned}
\end{equation*}
Similarly, we have 
\begin{equation*}
\begin{aligned}  
& \|[(\rho - \rho_*)^{2}]_x\|_{L^{1}} \leq CN_0N_1(1+\tau)^{-1},  \\
& \|(M\omega)_x\|_{L^{1}} \leq CN_0N_1(1+\tau)^{-1},   \\
& \|((\rho -\rho_*)M)_x\|_{L^{1}} \leq CN_0N_1(1+\tau)^{-1},   \\
& \|((n -n_*)m)_x\|_{L^{1}} \leq CN_0N_1(1+\tau)^{-1}.
\end{aligned}
\end{equation*}
By Cauchy-Schwarz inequality, Lemmas \ref{lma 1.2}, \ref{thm 2.2} and Proposition \ref{thm 3.3}, we have
\begin{equation*}
\begin{aligned}  
\|(n_x\omega)_x\|_{L^{1}} \leq&\|n_{xx}\omega\|_{L^{1}} + \|n_x\omega_x\|_{L^{1}}  \\
\leq&\|n_{xx}\|\|\omega\| + \|n_x\|\|\omega_x\|   \\
\leq&\|n_x\|^{\frac{1}{2}} \|n_{xxx}\|^{\frac{1}{2}}\|\omega\| + \|n_x\|\|\omega_x\|   \\
\leq&N_0\epsilon_0^{\frac{1}{2}}(\epsilon_0 + {\mathcal{E}_0})^{\frac{1}{2}}(1+\tau)^{-\frac{5}{4}} + CN_1^{2}(1+\tau)^{-\frac{3}{2}}.
\end{aligned}
\end{equation*}
Substituting the above estimates into \eqref{4.14} yields
\begin{equation}\label{4.15}
\begin{aligned}  
 I_2 \leq &C\left[N_0N_1 + N_1^{2} +  N_0\epsilon_0^{\frac{1}{2}}(\epsilon_0 + {\mathcal{E}_0})^{\frac{1}{2}} \right]\int_{\frac{t}{2}}^{t}(1+t-\tau)^{-\frac{3}{4}}(1+\tau)^{-1} d\tau  \\
\leq&C\left[N_0^{2} + N_1^{2} + \epsilon_0^{\frac{1}{2}}(\epsilon_0 + {\mathcal{E}_0})^{\frac{1}{2}} N_0\right](1+t)^{-\frac{3}{4}}.
\end{aligned}
\end{equation}
When $k=1$,
\begin{equation}\label{4.16}
\begin{aligned}  
 I_3 = &C\int_{0}^{t} e^{-c(t-\tau)}\bigg[\|(mu)_{xx}\| + \|\big((\rho - \rho_*)^{2}\big)_{xx}\|  \\
& + \|(M\omega)_{xx}\| + \|(n_x\omega)_{xx}\| + \|((\rho -\rho_*)M)_x\| + \|((n -n_*)m)_x\|\bigg] d\tau.
\end{aligned}
\end{equation}
We estimate terms on the right hand side of \eqref{4.16}.
It follows from Lemma \ref{lma 1.2}, Proposition \ref{thm 3.3} and \eqref{3.13} that
\begin{equation}
\begin{aligned}  
\|(mu)_{xx}\|  \leq &\|m_{xx}u\| + 2\|m_xu_x\| + \|mu_{xx}\|   \\
\leq&\|u\|_{\infty}\|m_{xx}\| + 2\|u_x\|_{\infty}\|m_x\| + \|m\|_{\infty}\|u_{xx}\|  \\
\leq&C\|u\|^{\frac{1}{2}}\|u_x\|^{\frac{1}{2}}\|m_x\|^{\frac{1}{2}}\|m_{xxx}\|^{\frac{1}{2}} + C\|u_x\|^{\frac{1}{2}}\|u_{xx}\|^{\frac{1}{2}}\|m_x\|  \\
& + C\|m\|^{\frac{1}{2}}\|m_x\|^{\frac{1}{2}}\|u_x\|^{\frac{1}{2}}\|u_{xxx}\|^{\frac{1}{2}}   \\
\leq&C\epsilon_0^{\frac{1}{2}}(\epsilon_0 + {\mathcal{E}_0})^{\frac{1}{2}}N_0^{\frac{1}{2}}N_1^{\frac{1}{2}}(1+\tau)^{-\frac{3}{2}} + C\epsilon_0^{\frac{1}{2}}(\epsilon_0 + {\mathcal{E}_0})^{\frac{1}{2}}N_1(1+\tau)^{-\frac{3}{2}}.
\end{aligned}
\end{equation}
Similarly, 
\begin{equation*}
\begin{aligned}  
\|\big((\rho - \rho_*)^{2}\big)_{xx}\| &\leq C\epsilon_0^{\frac{1}{2}}(\epsilon_0 + {\mathcal{E}_0})^{\frac{1}{2}}N_0^{\frac{1}{2}}N_1^{\frac{1}{2}}(1+\tau)^{-\frac{3}{2}} + C\epsilon_0^{\frac{1}{2}}(\epsilon_0 + {\mathcal{E}_0})^{\frac{1}{2}}N_1(1+\tau)^{-\frac{3}{2}}   \\
\|(M\omega)_{xx}\| \leq& C\epsilon_0^{\frac{1}{2}}(\epsilon_0 + {\mathcal{E}_0})^{\frac{1}{2}}N_0^{\frac{1}{2}}N_1^{\frac{1}{2}}(1+\tau)^{-\frac{3}{2}} + C\epsilon_0^{\frac{1}{2}}(\epsilon_0 + {\mathcal{E}_0})^{\frac{1}{2}}N_1(1+\tau)^{-\frac{3}{2}}.   
\end{aligned}
\end{equation*}
Combining Lemmas \ref{lma 1.2}, \ref{thm 2.2}, Propositions \ref{thm 3.3} and \ref{thm 3.4}, we deduce that
\begin{equation*}
\begin{aligned} 
\|(n_x\omega)_{xx}\|\leq&\|n_{xxx}\omega\| + 2\|n_{xx}\omega_x\| + \|n_x\omega_{xx}\|   \\
\leq&\|\omega\|_{\infty}\|n_{xxx}\| + \|\omega_x\|_{\infty}\|n_{xx}\| + \|n_x\|_{\infty}\|\omega_{xx}\|  \\
\leq&C\|\omega\|^{\frac{1}{2}}\|\omega_x\|^{\frac{1}{2}}\|n_x\|^{\frac{1}{3}}\|n_{xxxx}\|^{\frac{2}{3}} + C\|\omega_x\|^{\frac{1}{2}}\|\omega_{xx}\|^{\frac{1}{2}}\|n_{xx}\|  \\
& + C\|n_x\|^{\frac{1}{2}}\|n_{xx}\|^{\frac{1}{2}}\omega_{xx}\|  \\
\leq&C\|\omega\|^{\frac{1}{2}}\|\omega_x\|^{\frac{1}{2}}\|n_x\|^{\frac{1}{3}}\|n_{xxxx}\|^{\frac{2}{3}} + C\|\omega_x\|^{\frac{1}{2}}\|\omega_{x}\|^{\frac{1}{4}}\|\omega_{xxx}\|^{\frac{1}{4}}\|n_{x}\|^{\frac{1}{2}}\|n_{xxx}\|^{\frac{1}{2}}  \\
& + C\|n_x\|^{\frac{1}{2}}\|n_{x}\|^{\frac{1}{4}}\|n_{xxx}\|^{\frac{1}{4}}\omega_{x}\|^{\frac{1}{2}}\|\omega_{xxx}\|^{\frac{1}{2}}  \\
\leq&CN_0^{\frac{1}{2}}N_1^{\frac{1}{2}}\epsilon_0^{\frac{1}{3}}(\epsilon_0 + {\mathcal{E}_0})^{\frac{2}{3}}(1+\tau)^{-2} + CN_1\epsilon_0^{\frac{1}{4}}(\epsilon_0 + {\mathcal{E}_0})^{\frac{3}{4}}(1+\tau)^{-2}.
\end{aligned}
\end{equation*}
Applying Lemma \ref{lma 1.2}, \eqref{3.13} and Proposition \ref{thm 3.1} gives
\begin{equation*}
\begin{aligned} 
\|((\rho -\rho_*)M)_x\| \leq&\|(\rho -\rho_*)_xM\| + \|(\rho -\rho_*)M_x\| \\
\leq&\|M\|_{\infty}\|(\rho -\rho_*)_x\| + \|(\rho -\rho_*)\|_{\infty}\|M_x\|  \\
\leq&C\|M\|^{\frac{1}{2}}\|M_x\|^{\frac{1}{2}}\|(\rho -\rho_*)_x\| + C\|(\rho -\rho_*)\|^{\frac{1}{2}}\|(\rho -\rho_*)_x\|^{\frac{1}{2}}\|M_x\|  \\
\leq&CN_0^{\frac{1}{2}}N_1^{\frac{1}{2}}\epsilon_0(1+\tau)^{-1}.
\end{aligned}
\end{equation*}
Similarly, 
\begin{equation*}
\begin{aligned} 
\|((n -n_*)m)_x\| \leq CN_0^{\frac{1}{2}}N_1^{\frac{1}{2}}\epsilon_0(1+\tau)^{-1}.
\end{aligned}
\end{equation*}
Substituting these estimates into \eqref{4.16} yields
\begin{equation}\label{4.18}
\begin{aligned} 
I_3 \leq &C\bigg[\epsilon_0^{\frac{1}{2}}(\epsilon_0 + {\mathcal{E}_0})^{\frac{1}{2}}N_0^{\frac{1}{2}}N_1^{\frac{1}{2}} + \epsilon_0^{\frac{1}{2}}(\epsilon_0 + {\mathcal{E}_0})^{\frac{1}{2}}N_1 +  CN_0^{\frac{1}{2}}N_1^{\frac{1}{2}}\epsilon_0^{\frac{1}{3}}(\epsilon_0 + {\mathcal{E}_0})^{\frac{2}{3}}   \\
& + N_1\epsilon_0^{\frac{1}{4}}(\epsilon_0 + {\mathcal{E}_0})^{\frac{3}{4}} + N_0^{\frac{1}{2}}N_1^{\frac{1}{2}}\epsilon_0\bigg]\int_{0}^{t} e^{-c(t-\tau)}(1+\tau)^{-\frac{3}{4}} d\tau   \\
\leq&C\left[\epsilon_0^{\frac{1}{2}}(\epsilon_0 + {\mathcal{E}_0})^{\frac{1}{2}} + \epsilon_0^{\frac{1}{3}}(\epsilon_0 + {\mathcal{E}_0})^{\frac{2}{3}} + \epsilon_0^{\frac{1}{4}}(\epsilon_0 + {\mathcal{E}_0})^{\frac{3}{4}} + \epsilon_0 \right](N_0 + N_1)(1+t)^{-\frac{3}{4}}.
\end{aligned}
\end{equation}
Putting \eqref{4.7}, \eqref{4.9}, \eqref{4.15}, and \eqref{4.18} together, we have 
\begin{equation}\label{4.19}
\begin{aligned} 
& \int_{0}^{t} \|(i\xi)e^{(t-\tau)A}\hat{G}(U)(\xi,\tau)\| d\tau   \\
\leq&C\bigg[N_0^{2} + N_1^{2} + \left[\epsilon_0^{\frac{1}{2}}(\epsilon_0 + {\mathcal{E}_0})^{\frac{1}{2}} + \epsilon_0^{\frac{1}{3}}(\epsilon_0 + {\mathcal{E}_0})^{\frac{2}{3}} + \epsilon_0^{\frac{1}{4}}(\epsilon_0 + {\mathcal{E}_0})^{\frac{3}{4}} + \epsilon_0\right]  \\
&(N_0 + N_1)\bigg](1+t)^{-\frac{3}{4}}.
\end{aligned}
\end{equation}
Substituting \eqref{4.6}, \eqref{4.19} into \eqref{4.5} gives
\begin{equation*}
\begin{aligned} 
\|U_x\| \leq&C\big[F_0 + N_0^{2} + N_1^{2} + \big(\epsilon_0^{\frac{1}{2}}(\epsilon_0 + {\mathcal{E}_0})^{\frac{1}{2}} + \epsilon_0^{\frac{1}{3}}(\epsilon_0 + {\mathcal{E}_0})^{\frac{2}{3}}  \\
& + \epsilon_0^{\frac{1}{4}}(\epsilon_0 + {\mathcal{E}_0})^{\frac{3}{4}} + \epsilon_0 \big) (N_0 + N_1)\big](1+t)^{-\frac{3}{4}},
\end{aligned}
\end{equation*}
equivalently,
\begin{equation*}
\begin{aligned} 
(1+t)^{\frac{3}{4}}\|U_x\| \leq&C\big[F_0 + N_0^{2} + N_1^{2} + \big(\epsilon_0^{\frac{1}{2}}(\epsilon_0 + {\mathcal{E}_0})^{\frac{1}{2}} + \epsilon_0^{\frac{1}{3}}(\epsilon_0 + {\mathcal{E}_0})^{\frac{2}{3}}  \\
& + \epsilon_0^{\frac{1}{4}}(\epsilon_0 + {\mathcal{E}_0})^{\frac{3}{4}} + \epsilon_0 \big) (N_0 + N_1)\big]
\end{aligned}
\end{equation*}
taking supremum, we have
\begin{equation}\label{4.20}
\begin{aligned} 
N_1 \leq&C\big[F_0 + N_0^{2} + N_1^{2} + \big(\epsilon_0^{\frac{1}{2}}(\epsilon_0 + {\mathcal{E}_0})^{\frac{1}{2}} + \epsilon_0^{\frac{1}{3}}(\epsilon_0 + {\mathcal{E}_0})^{\frac{2}{3}}  \\
& + \epsilon_0^{\frac{1}{4}}(\epsilon_0 + {\mathcal{E}_0})^{\frac{3}{4}} + \epsilon_0 \big) (N_0 + N_1)\big].
\end{aligned}
\end{equation}
Adding \eqref{4.13} with \eqref{4.20}, we obtain
\begin{equation}
\begin{aligned} 
N_0 + N_1 \leq &C\bigg[F_0 + N_0^{2} + N_1^{2} + \big(\epsilon_0^{\frac{1}{2}}(\epsilon_0 + {\mathcal{E}_0})^{\frac{1}{2}} + \epsilon_0^{\frac{1}{3}}(\epsilon_0 + {\mathcal{E}_0})^{\frac{2}{3}}  \\
& + \epsilon_0^{\frac{1}{4}}(\epsilon_0 + {\mathcal{E}_0})^{\frac{3}{4}} + \epsilon_0 \big) (N_0 + N_1)\bigg]  \\
\leq&CF_0 + C(N_0 + N_1)^{2} + \big(\epsilon_0^{\frac{1}{2}}(\epsilon_0 + {\mathcal{E}_0})^{\frac{1}{2}} + \epsilon_0^{\frac{1}{3}}(\epsilon_0 + {\mathcal{E}_0})^{\frac{2}{3}}  \\
& + \epsilon_0^{\frac{1}{4}}(\epsilon_0 + {\mathcal{E}_0})^{\frac{3}{4}} + \epsilon_0 \big) (N_0 + N_1).
\end{aligned}
\end{equation}
Choosing $ \epsilon_0, N_0, N_1 $ suitably small, we arrive at 
\begin{equation}\label{4.22}
\begin{aligned} 
N_0 + N_1 \leq CF_0.
\end{aligned}
\end{equation}
\noindent $\bullet$ (Estimates of $I_2, I_3$ for $k=2$):

Take $ k =2 $, we have 
\begin{equation}\label{4.23}
\begin{aligned} 
 I_2 = &C\int_{\frac{t}{2}}^{t}(1+t-\tau)^{-\frac{3}{4}}\big(\|(mu)_{xx}\|_{L^{1}} + \|((\rho - \rho_*)^{2})_{xx}\|_{L^{1}} + \|(M\omega)_{xx}\|_{L^{1}}  \\
& + \|(n_x\omega)_{xx}\|_{L^{1}}  + \|((\rho -\rho_*)M)_{xx}\|_{L^{1}} + \|((n -n_*)m)_{xx}\|_{L^{1}}\big) d\tau.  \\
\end{aligned}
\end{equation}
By Cauchy-Schwarz inequality and Proposition \ref{thm 3.1} one gets
\begin{equation*}
\begin{aligned} 
\|(mu)_{xx}\|_{L^{1}} \leq&\|m_{xx}u\|_{L^{1}} + 2\|m_xu_x\|_{L^{1}} + \|mu_{xx}\|_{L^{1}}  \\
\leq&\|u\|\|m_{xx}\| + 2\|m_x\|\|u_x\| + \|m\|\|u_{xx}\|   \\
\leq&\epsilon_0N_2(1+\tau)^{-\frac{3}{2}} + C\epsilon_0^{2}(1+
\tau)^{-\frac{3}{2}}.
\end{aligned}
\end{equation*}
Similarly,
\begin{equation*}
\begin{aligned} 
& \|((\rho - \rho_*)^{2})_{xx}\|_{L^{1}} \leq \epsilon_0N_2(1+\tau)^{-\frac{3}{2}} + C\epsilon_0^{2}(1+
\tau)^{-\frac{3}{2}},  \\
& \|(M\omega)_{xx}\|_{L^{1}} \leq \epsilon_0N_2(1+\tau)^{-\frac{3}{2}} + C\epsilon_0^{2}(1+
\tau)^{-\frac{3}{2}},  \\
& \|((\rho -\rho_*)M)_{xx}\|_{L^{1}} \leq \epsilon_0N_2(1+\tau)^{-\frac{3}{2}} + C\epsilon_0^{2}(1+
\tau)^{-\frac{3}{2}},  \\
& \|((n -n_*)m)_{xx}\|_{L^{1}} \leq \epsilon_0N_2(1+\tau)^{-\frac{3}{2}} + C\epsilon_0^{2}(1+
\tau)^{-\frac{3}{2}}. 
\end{aligned}
\end{equation*}
With the aid of Cauchy-Schwarz inequality, Lemma \ref{lma 1.2}, Proposition \ref{thm 3.4} and Theorem \ref{thm 1.2}, we get
\begin{equation*}
\begin{aligned} 
\|(n_x\omega)_{xx}\|_{L^{1}} \leq&\|n_{xxx}\omega\|_{L^{1}} + 2\|n_{xx}\omega_x\|_{L^{1}} + \|n_x\omega_{xx}\|_{L^{1}}  \\
\leq& \|n_{xxx}\|\|\omega\| + 2\|n_{xx}\|\|\omega_x\| + \|n_x\|\|\omega_{xx}\|  \\  
\leq&\|\omega\| \|n_x\|^{\frac{1}{3}}\|n_{xxxx}\|^{\frac{2}{3}}  + 2\|n_{xx}\|\|\omega_x\| + \|n_x\|\|\omega_{xx}\|  \\  
\leq&C\epsilon_0^{\frac{1}{3}}(\epsilon_0 + {\mathcal{E}_0})^{\frac{2}{3}}(1+\tau)^{-\frac{21}{12}} + C\epsilon_0N_2(1+\tau)^{-2}.
\end{aligned}
\end{equation*}
Substituting these estimates into \eqref{4.23} yields
\begin{equation}\label{4.24}
\begin{aligned} 
I_2 \leq&\big( \epsilon_0N_2 + \epsilon_0^{2} + \epsilon_0^{\frac{1}{3}}(\epsilon_0 + {\mathcal{E}_0})^{\frac{2}{3}}\big)\int_{\frac{t}{2}}^{t}(1+t-\tau)^{-\frac{3}{4}}(1+\tau)^{-\frac{5}{4}} d\tau  \\
\leq&\big( \epsilon_0N_2 + \epsilon_0^{2} + \epsilon_0^{\frac{1}{3}}(\epsilon_0 + {\mathcal{E}_0})^{\frac{2}{3}}\big)(1+\tau)^{-\frac{5}{4}}.
\end{aligned}
\end{equation}
For $I_3$, when $k=2$,
\begin{equation}\label{4.25}
\begin{aligned} 
I_3 = &C\int_{0}^{t} e^{-c(t-\tau)}\big(\|(mu)_{xxx}\| + \|\big((\rho - \rho_*)^{2}\big)_{xxx}\|  \\
& + \|(M\omega)_{xxx}\| + \|(n_x\omega)_{xxx}\| + \|((\rho -\rho_*)M)_{xx}\| + \|((n -n_*)m)_{xx}\|\big) d\tau.
\end{aligned}
\end{equation}
Applying Lemma \ref{lma 1.2} and Proposition \ref{thm 3.3} gives
\begin{equation*}
\begin{aligned} 
\|(mu)_{xxx}\| \leq&\|m_{xxx}u\| + 3\|m_{xx}u_x\| + 3\|m_xu_{xx}\| + \|mu_{xxx}\|   \\
\leq&\|u\|_{\infty}\|m_{xxx}\| + 3 \|u_x\|_{\infty}\|m_{xx}\| + 3 \|m_x\|_{\infty}\|u_{xx}\| + \|m\|_{\infty}\|u_{xxx}\|   \\
\leq&C\|u\|^{\frac{1}{2}}\|u_x\|^{\frac{1}{2}}\|m_{xxx}\| + C\|u_x\|^{\frac{1}{2}}\|u_{xx}\|^{\frac{1}{2}}\|m_{xx}\|  \\
&  + C\|m_x\|^{\frac{1}{2}}\|m_{xx}\|^{\frac{1}{2}}\|u_{xx}\| + 
C\|m\|^{\frac{1}{2}}\|m_x\|^{\frac{1}{2}}\|u_{xxx}\|   \\
\leq&C\epsilon_0^{\frac{1}{3}}(\epsilon_0 + {\mathcal{E}_0})^{\frac{2}{3}}N_0^{\frac{1}{2}}N_1^{\frac{1}{2}}(1+\tau)^{-2} + C\epsilon_0^{\frac{1}{2}}(\epsilon_0 + {\mathcal{E}_0})^{\frac{1}{2}}N_1^{\frac{1}{2}}N_2^{\frac{1}{2}}(1+\tau)^{-2}.
\end{aligned}
\end{equation*}
Similarly,
\begin{equation*}
\begin{aligned} 
\|\big((\rho - \rho_*)^{2}\big)_{xxx}\| \leq &C\epsilon_0^{\frac{1}{3}}(\epsilon_0 + {\mathcal{E}_0})^{\frac{2}{3}}N_0^{\frac{1}{2}}N_1^{\frac{1}{2}}(1+\tau)^{-2} + C\epsilon_0^{\frac{1}{2}}(\epsilon_0 + {\mathcal{E}_0})^{\frac{1}{2}}N_1^{\frac{1}{2}}N_2^{\frac{1}{2}}(1+\tau)^{-2}   \\
\|(M\omega)_{xxx}\| \leq&C\epsilon_0^{\frac{1}{3}}(\epsilon_0 + {\mathcal{E}_0})^{\frac{2}{3}}N_0^{\frac{1}{2}}N_1^{\frac{1}{2}}(1+\tau)^{-2} + C\epsilon_0^{\frac{1}{2}}(\epsilon_0 + {\mathcal{E}_0})^{\frac{1}{2}}N_1^{\frac{1}{2}}N_2^{\frac{1}{2}}(1+\tau)^{-2}.   \\
\end{aligned}
\end{equation*}
By Lemma \ref{lma 1.2}, Propositions \ref{thm 3.3} and \ref{thm 3.4}, we can obtain
\begin{equation*}
\begin{aligned} 
\|(n_x\omega)_{xxx}\| \leq&\|n_{xxxx}\omega\| + 3 \|n_{xxx}\omega_x\| + 3\|n_{xx}\omega_{xx}\| + \|n_x\omega_{xxx}\|   \\
\leq&C\|\omega\|^{\frac{1}{2}}\|\omega_x\|^{\frac{1}{2}}\|n_{xxxx}\| + C\|\omega_x\|^{\frac{1}{2}}\|\omega_{xx}\|^{\frac{1}{2}}\|n_{xxx}\|   \\
& + C\|n_{xx}\|^{\frac{1}{2}}\|n_{xxx}\|^{\frac{1}{2}}\|\omega_{xx}\| + C\|n_x\|^{\frac{1}{2}}\|n_{xx}\|^{\frac{1}{2}}\|\omega_{xxx}\|  \\
\leq&\epsilon_0^{\frac{1}{2}}(\epsilon_0 +{\mathcal{E}_0})N_0^{\frac{1}{2}}(1+\tau)^{-\frac{19}{8}} + C\epsilon_0^{\frac{1}{3}}(\epsilon_0 + {\mathcal{E}_0})^{\frac{2}{3}}N_1^{\frac{1}{2}}N_2^{\frac{1}{2}}(1+\tau)^{-\frac{5}{2}}   \\
& + C\epsilon_0^{\frac{1}{4}}(\epsilon_0 + {\mathcal{E}_0})^{\frac{3}{4}}N_2(1+\tau)^{-\frac{5}{2}}.
\end{aligned}
\end{equation*}
By Lemma \ref{lma 1.2}, Theorem \ref{thm 1.2}, Propositions \ref{thm 3.1} and \ref{thm 3.3}, we deduce that
\begin{equation*}
\begin{aligned} 
\|((\rho -\rho_*)M)_{xx}\|
\leq&\|(\rho - \rho_*)_{xx}M\| + 2\|(\rho - \rho _*)_xM_x\| + \|(\rho -\rho_*)M_{xx}\|   \\
\leq&\|M\|_{\infty}\|(\rho - \rho_*)_{xx}\| + 2\|(\rho - \rho_*)_x\|_{\infty}\|M_x\| + \|(\rho -\rho_*)\|_{\infty}\|M_{xx}\|  \\
\leq&C\|M\|^{\frac{1}{2}}\|M_x\|^{\frac{1}{2}}\|(\rho - \rho_*)_{x}\|^{\frac{1}{2}}\|(\rho - \rho_*)_{xxx}\|^{\frac{1}{2}}    \\
& + C\|(\rho - \rho_*)_x\|^{\frac{1}{2}}\|(\rho - \rho_*)_{xx}\|^{\frac{1}{2}}\|M_x\| + C\|(\rho -\rho_*)\|^{\frac{1}{2}}\|(\rho -\rho_*)_x\|^{\frac{1}{2}}\|M_{xx}\|  \\
\leq&C\epsilon_0^{\frac{1}{2}}(\epsilon_0 + {\mathcal{E}_0})^{\frac{1}{2}}N_0^{\frac{1}{2}}N_1^{\frac{1}{2}}(1+\tau)^{-\frac{3}{2}} + C\epsilon_0N_1^{\frac{1}{2}}N_2^{\frac{1}{2}}(1+\tau)^{-\frac{3}{2}}.
\end{aligned}
\end{equation*}
Similarly,
\begin{equation*}
\begin{aligned} 
\|((n -n_*)m)_{xx}  
\leq&C\epsilon_0^{\frac{1}{2}}(\epsilon_0 + {\mathcal{E}_0})^{\frac{1}{2}}N_0^{\frac{1}{2}}N_1^{\frac{1}{2}}(1+\tau)^{-\frac{3}{2}} + C\epsilon_0N_1^{\frac{1}{2}}N_2^{\frac{1}{2}}(1+\tau)^{-\frac{3}{2}}.
\end{aligned}
\end{equation*}
Substituting the above estimates into \eqref{4.25}, we have 
\begin{equation}\label{4.26}
\begin{aligned} 
I_3 \leq&\bigg[\epsilon_0^{\frac{1}{3}}(\epsilon_0 + {\mathcal{E}_0})^{\frac{2}{3}}N_0^{\frac{1}{2}}N_1^{\frac{1}{2}} + \epsilon_0^{\frac{1}{2}}(\epsilon_0 + {\mathcal{E}_0})^{\frac{1}{2}}N_1^{\frac{1}{2}}N_2^{\frac{1}{2}}   \\
& + \epsilon_0^{\frac{1}{2}}(\epsilon_0 +{\mathcal{E}_0})N_0^{\frac{1}{2}} + C\epsilon_0^{\frac{1}{3}}(\epsilon_0 + {\mathcal{E}_0})^{\frac{2}{3}}N_1^{\frac{1}{2}}N_2^{\frac{1}{2}}  \\
& + \epsilon_0^{\frac{1}{2}}(\epsilon_0 + {\mathcal{E}_0})^{\frac{1}{2}}N_0^{\frac{1}{2}}N_1^{\frac{1}{2}} + \epsilon_0N_1^{\frac{1}{2}}N_2^{\frac{1}{2}}\bigg]\int_{0}^{t} e^{-c(t-\tau)}(1+\tau)^{-\frac{5}{4}} d\tau    \\
\leq&\bigg[\left(\epsilon_0 + \epsilon_0^{\frac{1}{3}}(\epsilon_0 + {\mathcal{E}_0})^{\frac{2}{3}} + \epsilon_0^{\frac{1}{2}}(\epsilon_0 + {\mathcal{E}_0})^{\frac{1}{2}} \right)(N_0 + N_1 + N_2)   \\
& + \epsilon_0^{\frac{1}{2}}(\epsilon_0 +{\mathcal{E}_0})N_0^{\frac{1}{2}}\bigg](1+\tau)^{-\frac{5}{4}}.
\end{aligned}
\end{equation}
Combining \eqref{4.7}, \eqref{4.9}, \eqref{4.24} and \eqref{4.26}, we have 
\begin{equation}\label{4.27}
\begin{aligned} 
& \int_{0}^{t} \|(i\xi)^{k}e^{(t-\tau)A}\hat{G(U)}(\xi,\tau)\| d\tau  \\
\leq&\bigg[\epsilon_0^{2} + \epsilon_0^{\frac{1}{3}}(\epsilon_0 + {\mathcal{E}_0})^{\frac{2}{3}} + \left(\epsilon_0 + \epsilon_0^{\frac{1}{3}}(\epsilon_0 + {\mathcal{E}_0})^{\frac{2}{3}} + \epsilon_0^{\frac{1}{2}}(\epsilon_0 + {\mathcal{E}_0})^{\frac{1}{2}} \right)(N_0 + N_1 + N_2)   \\
& + \epsilon_0^{\frac{1}{2}}(\epsilon_0 +{\mathcal{E}_0})N_0^{\frac{1}{2}}\bigg](1+t)^{-\frac{5}{4}}. 
\end{aligned}
\end{equation}
Plugging \eqref{4.6}, \eqref{4.27} into \eqref{4.5}, we obtain 
\begin{equation*}
\begin{aligned} 
\|U_{xx}\| \leq&C\bigg[F_0 + \mathcal{E}_0 + \epsilon_0^{\frac{1}{3}}(\epsilon_0 + {\mathcal{E}_0})^{\frac{2}{3}} + \big(\epsilon_0 + \epsilon_0^{\frac{1}{3}}(\epsilon_0 + {\mathcal{E}_0})^{\frac{2}{3}}  \\
&  + \epsilon_0^{\frac{1}{2}}(\epsilon_0 + {\mathcal{E}_0})^{\frac{1}{2}} \big)(N_0 + N_1 + N_2) + \epsilon_0^{\frac{1}{2}}(\epsilon_0 +{\mathcal{E}_0})N_0^{\frac{1}{2}}\bigg](1+t)^{-\frac{5}{4}}, 
\end{aligned}
\end{equation*}
that is,
\begin{equation*}
\begin{aligned} 
(1+t)^{\frac{5}{4}}  \|U_{xx}\| \leq&C\big[F_0 + \mathcal{E}_0 + \epsilon_0^{\frac{1}{3}}(\epsilon_0 + {\mathcal{E}_0})^{\frac{2}{3}} + \big(\epsilon_0 + \epsilon_0^{\frac{1}{3}}(\epsilon_0 + {\mathcal{E}_0})^{\frac{2}{3}}  \\
&  + \epsilon_0^{\frac{1}{2}}(\epsilon_0 + {\mathcal{E}_0})^{\frac{1}{2}} \big)(N_0 + N_1 + N_2) + \epsilon_0^{\frac{1}{2}}(\epsilon_0 +{\mathcal{E}_0})N_0^{\frac{1}{2}}\big],
\end{aligned}
\end{equation*}
taking supremum yields 
\begin{equation}\label{4.28}
\begin{aligned} 
N_2 \leq&C\bigg[F_0 + \mathcal{E}_0  + \epsilon_0^{\frac{1}{3}}(\epsilon_0 + {\mathcal{E}_0})^{\frac{2}{3}} + \big(\epsilon_0 + \epsilon_0^{\frac{1}{3}}(\epsilon_0 + {\mathcal{E}_0})^{\frac{2}{3}}  \\
&  + \epsilon_0^{\frac{1}{2}}(\epsilon_0 + {\mathcal{E}_0})^{\frac{1}{2}} \big)(N_0 + N_1 + N_2) + \epsilon_0^{\frac{1}{2}}(\epsilon_0 +{\mathcal{E}_0})N_0^{\frac{1}{2}}\bigg].
\end{aligned}
\end{equation}
Combining \eqref{4.22} and \eqref{4.28} yields
\begin{equation}\label{4.29}
\begin{aligned} 
N_0 + N_1 + N_2 \leq &C\big[F_0 + \mathcal{E}_0  + \epsilon_0^{\frac{1}{3}}(\epsilon_0 + {\mathcal{E}_0})^{\frac{2}{3}} + \big(\epsilon_0 + \epsilon_0^{\frac{1}{3}}(\epsilon_0 + {\mathcal{E}_0})^{\frac{2}{3}}  \\
&  + \epsilon_0^{\frac{1}{2}}(\epsilon_0 + {\mathcal{E}_0})^{\frac{1}{2}} \big)(N_0 + N_1 + N_2) + \epsilon_0^{\frac{1}{2}}(\epsilon_0 +{\mathcal{E}_0})N_0^{\frac{1}{2}}\big]  \\
\leq&C\left(F_0 + \mathcal{E}_0 \right)  + C\epsilon_0^{\frac{1}{2}}(\epsilon_0 + {\mathcal{E}_0})^{\frac{1}{2}} (N_0 + N_1 + N_2) \\
& + \epsilon_0^{\frac{1}{2}}(\epsilon_0 +{\mathcal{E}_0})(N_0 + N_1 + N_2)^{\frac{1}{2}}.
\end{aligned}
\end{equation}
Owing to the smallness of $\epsilon_0$ and Young's inequality, we obtain 
\begin{equation}
N_0 + N_1 + N_2 \leq C(F_0 + {\mathcal{E}_0}).
\end{equation}
We now carry out the proof of $\|\partial_x^{k}(u-\omega)\|$ in Theorem \ref{thm 1.3}. For this we recall \eqref{3.17}, which implies that
\begin{equation}\label{4.30}
\|u - \omega\| \leq  e^{-ct}\|(u_0 - \omega_0)\| + \int_{0}^{t}  e^{-c(t - \tau)}\|R(x,\tau)\| d\tau,
\end{equation}
where $c$ is a positive constant and $R$ is defined as in \eqref{3.16}. To estimate $\|R(x,\tau)\|$, by virtue of \eqref{3.13} and \eqref{1.6}, we obtain
\begin{equation*}
\begin{aligned} 
& \|v_x\| \leq CF_0(1+\tau)^{-\frac{3}{4}},  \\
& \|uu_x\| \leq C(F_0 + \mathcal{E}_0)^{2}(1+\tau)^{-\frac{5}{4}},   \\
& \|vv_x\| \leq C(F_0 + \mathcal{E}_0)^{2}(1+\tau)^{-\frac{5}{4}},  \\
& \|\omega\omega_x\| \leq C(F_0 + \mathcal{E}_0)^{2}(1+\tau)^{-\frac{5}{4}},   \\
& \|\omega_{xx}\| \leq C(F_0 + {\mathcal{E}_0})(1+\tau)^{-\frac{5}{4}},   \\
& \|\frac{1}{n}n_x\| \leq CF_0(1+\tau)^{-\frac{3}{4}},  \\
& \|\frac{1}{n}n_x\omega_x\| \leq  C(F_0 + {\mathcal{E}_0})^{2}(1+\tau)^{-\frac{7}{4}},  \\
& \|(n-n_*)(u-\omega)\| \leq CF_0(1+\tau)^{-\frac{1}{2}} \|u-\omega\|,    \\
& \|(\rho -\rho_*)(u-\omega)\| \leq CF_0(1+\tau)^{-\frac{1}{2}} \|u-\omega\|.
\end{aligned}
\end{equation*}
Therefore, we can get 
\begin{equation}\label{4.31}
\begin{aligned} 
 \|R\| \leq &C\left(F_0 + {\mathcal{E}_0} \right)(1+\tau)^{-\frac{3}{4}} + CF_0(1+\tau)^{-\frac{1}{2}} \|u-\omega\|.
\end{aligned}
\end{equation}
Substituting \eqref{4.31} into \eqref{4.30}, we obtain
\begin{equation*}
\begin{aligned} 
\|u - \omega\| \leq & e^{-ct}\|(u_0 - \omega_0)\| + \left(F_0 + {\mathcal{E}_0} \right)(1+t)^{-\frac{3}{4}}  + CF_0(1+t)^{-\frac{5}{4}}\sup\limits_{\tau\in[0,t]}\left[(1+\tau)^{\frac{3}{4}}\|u-\omega\| \right] \\
\leq&C(F_0 + {\mathcal{E}_0} )(1+t)^{-\frac{3}{4}} + CF_0(1+t)^{-\frac{5}{4}}\sup\limits_{\tau\in[0,t]} \left[(1+\tau)^{\frac{3}{4}}\|u-\omega\| \right], \\
\end{aligned}
\end{equation*}
furthermore,
\begin{equation*}
\begin{aligned} 
& (1+t)^{\frac{3}{4}}\|u - \omega\| \leq C(F_0 + {\mathcal{E}_0} ) + CF_0(1+t)^{-\frac{1}{2}}\sup\limits_{\tau\in[0,t]}\left[(1+\tau)^{\frac{3}{4}}\|u-\omega\| \right].
\end{aligned}
\end{equation*}
Because of the smallness of $F_0$, we have
\begin{equation}
\begin{aligned} 
\|u - \omega\|  \leq C(F_0 + {\mathcal{E}_0})(1+t)^{-\frac{3}{4}}.
\end{aligned}
\end{equation}
Similarly,
\begin{equation}\label{4.33}
\begin{aligned} 
\|(u - \omega)_x\| \leq  e^{-ct}\|(u_0 - \omega_0)_x\| + \int_{0}^{t}  e^{-c(t - \tau)}\|R_x(\cdot,\tau)\| d\tau.
\end{aligned}
\end{equation}
To estimate $\|R_x(\tau,\cdot)\|$, by virtue of \eqref{3.13} and \eqref{1.6}, we obtain
\begin{equation*}
\begin{aligned} 
\|v_{xx}\| \leq&C(F_0 + {\mathcal{E}_0})(1+\tau)^{-\frac{5}{4}},  \\
\|(uu_x)_x\| \leq&C(F_0 + {\mathcal{E}_0})^{2}(1+\tau)^{-\frac{7}{4}},  \\
\|(vv_x)_x\| \leq&C(F_0 + {\mathcal{E}_0})^{2}(1+\tau)^{-\frac{7}{4}},  \\
\|(\omega \omega_x)_x\| \leq&C(F_0 + {\mathcal{E}_0})^{2}(1+\tau)^{-\frac{7}{4}},   \\
\|\omega_{xxx}\| \leq&C(F_0 + {\mathcal{E}_0})(1+\tau)^{-\frac{3}{2}},   \\
\|(\frac{1}{n}n_x\omega_x)_x\| \leq&C(F_0 + {\mathcal{E}_0})^{2}(1+\tau)^{-\frac{7}{4}},   \\
\|\big((n-n_*)(u-\omega)\big)_x\| \leq&\|(n-n_*)_x(u-\omega)\| + \|(n-n_*)(u-\omega)_x\|  \\
\leq&C(F_0 + {\mathcal{E}_0})^2(1+\tau)^{-\frac{7}{4}} + CF_0(1+\tau)^{-\frac{1}{2}}\|(u-\omega)_x\|.
\end{aligned} 
\end{equation*}
Therefore, we can get 
\begin{equation}\label{4.34}
\begin{aligned} 
\|R_x(\tau,\cdot)\| \leq&C\left(F_0 + {\mathcal{E}_0} \right)(1+\tau)^{-\frac{5}{4}}+ CF_0(1+\tau)^{-\frac{1}{2}}\|(u-\omega)_x\|.
\end{aligned}
\end{equation}
Plugging \eqref{4.34} into \eqref{4.33} we can obtain
\begin{equation*}
\begin{aligned} 
\|(u - \omega)_x\| \leq&e^{-ct}\|(u_0 - \omega_0)_x\| +  C\left
(F_0 + \mathcal{E}_0 \right)(1+t)^{-\frac{5}{4}}   \\
& + CF_0(1+\tau)^{-\frac{7}{4}}\sup\limits_{0\leq \tau \leq t}\left[(1+t)^{\frac{5}{4}}\|(u-\omega)_x\|\right],
\end{aligned}
\end{equation*}
which implies that 
\begin{equation*}
\begin{aligned} 
& (1+t)^{\frac{5}{4}}\|(u - \omega)_x\| \\
\leq&C\left(F_0 + \mathcal{E}_0\right)  + CF_0(1+t)^{-\frac{1}{2}}\sup\limits_{0\leq \tau \leq t}\left[(1+\tau)^{\frac{5}{4}}\|(u-\omega)_x\|\right].
\end{aligned}
\end{equation*}
Due to the smallness of $F_0$, we have
\begin{equation}
\begin{aligned} 
\|(u - \omega)_x\| \leq & C\left(F_0 + {\mathcal{E}_0}\right)(1+t)^{-\frac{5}{4}},
\end{aligned}
\end{equation}
this complete the proof of Theorem \ref{thm 1.3}.
\end{proof}

{\bf Conflict of Interest:} The authors declared that they have no conflicts of interest to this work.

\bibliographystyle{abbrv}  
\bibliography{ref}

\end{document}